\newenvironment{newlist}
   {\begin{list}{}{\setlength{\labelsep}{0.25cm}
                   \setlength{\labelwidth}{0.65cm}
                      \setlength{\leftmargin}{0.9cm}}}
   {\end{list}}
\newenvironment{claimslist}
   {\begin{list}{}{\setlength{\labelsep}{0.25cm}
   			  \setlength{\labelwidth}{2.3cm}
                   \setlength{\leftmargin}{1.53cm}}}
   {\end{list}}
\newenvironment{caseslist}
   {\begin{list}{}{\setlength{\labelsep}{0.25cm}
                   \setlength{\labelwidth}{2.1cm}
                   \setlength{\leftmargin}{1.35cm}}}
   {\end{list}}
\renewcommand{\le}{\leqslant}
\renewcommand{\ge}{\geqslant}
\renewcommand{\leq}{\leqslant}
\renewcommand{\geq}{\geqslant} 
\renewcommand{\phi}{\varphi}  
\newcommand{\defn}[1]{{\emph{#1}}}%defintion
\newcommand{\cat}[1]{\boldsymbol{\mathscr{#1}}} %categories
\newcommand{\str}[1]{\mathbf{#1}} % general structures
\newcommand{\alg}[1]{\str{#1}} %algebras
\newcommand{\spc}[1]{\boldsymbol{#1}} %spaces
\newcommand{\fnt}[1]{\mathsf{#1}} %functors
\newcommand{\ope}[1]{\mathbb{#1}} %algebraic operators
\newcommand{\CA}{{\cat A}} %generic class of algebras
\newcommand{\CB}{\cat B} %variety of boolean algebras
\newcommand{\CCD}{\cat D} % variety of bounded distributive lattices
\newcommand{\CP}{\cat P} %priestley spaces
\newcommand{\CV}{\cat V} %varieties generated by \K_n
\newcommand{\CQ}{{\cat Q}} %for quasivariety generated by K_n
\newcommand{\CX}{\cat X} %generic category of dual spaces
\newcommand{\DB}{\cat{DB}} % variety of bilattices
\newcommand{\D}{\fnt D} %natural dual functor
\newcommand{\E}{\fnt E} %natural dual functor
\newcommand{\ED}{\fnt{ED}} %composed
\newcommand{\Free}{\fnt{F}} %free algebra
\newcommand{\Tp}{\mathscr{T}}
\newcommand{\A}{\alg A}
\newcommand{\B}{\alg B}
\newcommand{\Lalg}{\alg L}
\newcommand{\K}{\alg K}
\newcommand{\M}{\alg M}
\renewcommand{\S}{\alg S}
\newcommand{\X}{\spc X}
\newcommand{\Y}{\spc Y}
\newcommand{\Z}{\spc Z}
\newcommand{\two}{\boldsymbol 2}
\newcommand{\four}{\boldsymbol 4}
\newcommand{\SEVEN}{\mathcal{SEVEN}}
\newcommand{\FOUR}{\mathcal{FOUR}}
\newcommand{\CM}{\cat M}
\newcommand{\MT}{\twiddle{\spc{M}}}
\newcommand{\CMT}{\twiddle{\cat M}}
\newcommand{\mbf}{\mathbf{f}}
\newcommand{\mbt}{\mathbf{t}}
\newcommand{\mbdf}{\mathbf{df}}
\newcommand{\mbdt}{\mathbf{dt}}
\newcommand{\mbdT}{\mathbf{d}\boldsymbol\top}
\newcommand{\Seq}{\mathfrak{S}}
\newcommand{\twiddle}[1]{{\smash{\underset{\raise.375ex\hbox{$\smash\sim$}}
       {#1}}\vphantom{\underline{#1}}}} 
\newcommand{\twoT}{\twiddle{ 2}}
\DeclareMathOperator{\Con}{Con}
\newcommand{\du}{\smash{\,\cup\kern-0.45em\raisebox{1ex}{$\cdot$}}\,\,}
\newcommand{\dubig}{\smash{\bigcup\kern-0.74em\raisebox{1.2ex}{$\cdot$}}\ }
\newcommand{\conv}[1]{\smash{\overset{\hbox{\lower1.8ex\hbox{$\smash{\scriptstyle{\smallsmile}}$}}}{#1}}}
 \DeclareMathOperator{\graph}{graph}
 \DeclareMathOperator{\dom}{dom}
 \DeclareMathOperator{\ISP}{\ope{ISP}}
 \DeclareMathOperator{\HSP}{\ope{HSP}}
 \DeclareMathOperator{\IScP}{{\ope{IS} _{\mathrm{c}}
 \ope{P}^+}}%%with plus (the default)
\DeclareMathOperator{\Id}{id}
\newcommand{\restrict}{\,{\mathbin{\vert\mkern-0.3mu\grave{}}}\,}
\newcommand{\lincol}{black}
\newcommand{\linth}{thick}
\newcommand{\epo}[1]{\filldraw[fill=white,\linth](#1) circle (2pt);}
\newcommand{\li}[1]{\draw[\linth,\lincol] #1;}
\newcommand{\dotli}[1]{\draw[dotted,\linth,\lincol] #1;}
\newcommand{\circsize}{3}%circle size
\newcommand{\sqsize}{0.1}%square size 
\newcommand{\pdonel}[1]{\filldraw[fill=white,\linth](#1) circle (\circsize pt);}
\newcommand{\pdoner}[1]{\filldraw[fill=black](#1) circle (\circsize pt);}
\newcommand{\pdtwol}[1]{\filldraw[fill=white,\linth](#1) +(-\sqsize,-\sqsize) rectangle ++(\sqsize,\sqsize);}
\newcommand{\pdtwor}[1]{\filldraw[fill=black](#1) +(-\sqsize,-\sqsize) rectangle ++(\sqsize,\sqsize);}
\newtheorem{thm}{Theorem}[section]
\newtheorem{lem}[thm]{Lemma}
\newtheorem{coro}[thm]{Corollary}
\newtheorem{prop}[thm]{Proposition}
\theoremstyle{definition}
\newtheorem{ex}[thm]{Example}
\begin{document}
\begin{frontmatter}

\title{Product representation for default bilattices: \\ 
an application of natural duality theory}

\author[lmc]{L.\,M.\,Cabrer}
\ead{l.cabrer@disia.unifi.it}
\address[lmc]{Dipartimento di Statistica, Informatica, Applicazioni, Universit\`a degli Studi di Firenze, 59 Viale Morgani, 50134, Italy}

\author[apkc]{A.\,P.\,K. Craig\corref{cor1}}
\ead{acraig@uj.ac.za}
\address[apkc]{Department of Mathematics, University of Johannesburg, PO Box 524, Auckland Park, 2006, South Africa}

\author[hap]{H.\,A. Priestley}
\ead{hap@maths.ox.ac.uk}
\address[hap]{
Mathematical Institute, University of Oxford, Radcliffe Observatory Quarter,
Oxford OX2 6GG, United Kingdom}

\cortext[cor1]{corresponding author}

\begin{keyword}  bilattice \sep  natural duality \sep product representation 
 \sep  knowledge order 
\MSC[2010]
Primary: 06D50, %lattices and duality
Secondary: 
08C20, %natural dualities for classes of algebras
03G25 %Other algebras related to logic 

\end{keyword}

%%%%%%  ABSTRACT AND KEYWORDS  (obligatory)

\begin{abstract}  
Bilattices (that is, sets with two lattice structures)
 provide an algebraic tool to model  simultaneously the validity of,  and  knowledge about,  
sentences in an appropriate language.
In  particular,   certain bilattices have been used to model situations 
in which 
information is prioritised and so can be viewed hierarchically.
 These default bilattices are not interlaced:
the  lattice operations of one lattice structure do not preserve the 
order of the other one.  
The well-known product representation theorem for interlaced
 bilattices does not extend to bilattices which fail to be interlaced 
and the lack of a product representation  has been a handicap 
to  understanding the  structure of default bilattices. 
In this paper we study, from an algebraic perspective, 
  a hierarchy  of varieties of  default bilattices,  
allowing for different levels of default.   
We develop  natural dualities for these varieties  and thereby 
obtain a  concrete representation for the  algebras in each variety. 
This leads on to a form of product representation that generalises the product representation 
as this  applies to 
distributive bilattices.
\end{abstract}

\end{frontmatter} 

%%%%%  THE BODY OF THE PAPER

\section{Introduction}\label{intro}

Our objective is to develop  a representation  theory 
for classes of algebras  which have arisen 
in the modelling of default logics.  
Specifically, we consider bilattices which have been used to
study logics with prioritised defaults~\cite{Rei80};   
the simplest and best known of these bilattices  was  introduced 
by Ginsberg \cite{Gins86} under the name $\SEVEN$.  
As we indicate below, 
such  `default bilattices' do not have the interlacing property  
and  so the equational classes they generate fall outside the scope 
of the Product Representation Theorem,
the cornerstone of the structure theory of interlaced bilattices.
A novel approach is  required in order to 
develop an analogous structure theory
beyond the interlaced setting. 
This we  provide by the application of natural duality theory.  
In \cite{CP1}, Cabrer and Priestley showed that, 
for the class $\DB$ of distributive bilattices, 
the product representation can be seen as a consequence of, 
and very closely allied to, 
 the natural duality for $\DB$ presented there. 
In the present paper we consider an infinite sequence 
of default bilattices, each having its predecessor
as a homomorphic image. We develop natural dualities  for  the equational classes  
generated by these bilattices and thereby arrive at 
a product representation for the members of these classes (Theorem~\ref{Thm:ProdRepDefault}).

To set the scene we recall the background very briefly. 
The motivation for Ginsberg's pioneering paper~\cite{Gins86} 
was his plan to use bilattices as a framework 
for inference with applications to artificial intelligence and 
logic programming, in particular for modelling inference  
in situations where information is  incomplete or contradictory. 
 The central idea  was to consider  sets  which carry 
two lattice orders: $\leq_t$, interpreted as measuring 
`degree of truth',  and $\leq_k$, measuring `degree of knowledge'. 
Certain elements of such structures were 
then treated as distinguished constants, 
representing degrees of truth or knowledge, 
$\mbt$ (`true'),  $\mbdf$ (`false  by default'),  and so on;
$\top$ and $\bot$ are used to denote, respectively, 
`contradiction' and `no information'.  
Fig.~\ref{fig:FOUR+SEVEN}(ii) shows the bilattice $\SEVEN$ 
Ginsberg proposed to model  this scenario.  
 As is customary in the bilattices literature 
the two constituent lattices are combined into a single diagram,
 with knowledge measured vertically and truth horizontally.

The bilattice $\SEVEN$ may be seen as providing a 
more refined  model  of  truth and falsity than 
the best-known bilattice of all, commonly known as $\FOUR$ 
and shown  in  Fig.~\ref{fig:FOUR+SEVEN}(i).  
In $\FOUR$, the elements $\mbt$ and $\mbf$ represent 
`true' and `false', $\top$ and $\bot$  `contradiction' and `no information'.

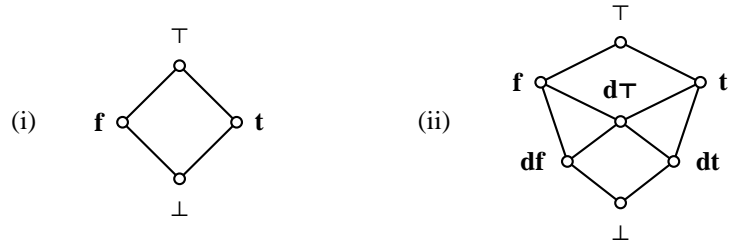
\begin{figure}[ht]
\begin{center}
\begin{tikzpicture}
\li{(-2,-0.5)--(-1.25,0.25)--(-0.5,-0.5)--(-1.25,-1.25)--(-2,-0.5)}
\node at (-1.25,0.65) {$\top$};
\node at (-2.3,-0.5) {$\mbf$};
\node at (-0.2,-0.5) {$\mbt$};
\node at (-1.25,-1.65) {$\bot$};

\epo{-2,-0.5}
\epo{-1.25,0.25}
\epo{-0.5,-0.5}
\epo{-1.25,-1.25}

\node at (-3.3,-0.5) {(i)};

\li{(4.55,0.56)--(3.5,0.03)--(4.55,-0.49)--(5.6,0.03)--(4.55,0.56)}
\li{(4.55,-1.57)--(3.85,-1.02)--(4.55,-0.49)--(5.25,-1.02)--(4.55,-1.57)}
\li{(3.5,0.03)--(3.85,-1.02)}
\li{(5.6,0.03)--(5.25,-1.02)}

\epo{4.55,0.56}
\epo{3.5,0.03}
\epo{5.6,0.03}

\epo{4.55,-0.49}
\epo{3.85,-1.02}
\epo{5.25,-1.02}
\epo{4.55,-1.57}

\node at (4.55,0.96) {$\top$};
\node at (4.55,-1.97) {$\bot$};
\node at (3.2,0.03) {$\mbf$};
\node at (5.9,0.03) {$\mbt$};
\node at (3.4,-1.02) {$\mbdf$};
\node at (5.7,-1.02) {$\mbdt$};
\node at (4.55,-0.06) {$\mbdT$};

\node at (2.1,-0.5) {(ii)};

\end{tikzpicture}
\caption{(i) the (distributive) bilattice $\FOUR$  and (ii) Ginsberg's bilattice $\SEVEN$ \label{fig:FOUR+SEVEN}}
\end{center}
\end{figure}

The bilattice $\SEVEN$ models one level of default.  
But there are situations in which a hierarchy of  
degrees of default may  be appropriate. 
Bilattices which model  prioritised defaults   
were discussed by Ginsberg~\cite[Section 7.3.3]{Gins88} 
and there is now a range of applications of  
such structures in artificial intelligence.  
We note for example the design by 
Encheva and Tumin~\cite{Encheva10} of a 
tutoring feedback system based on a ten-element default bilattice 
 to inform follow-up questions when the initial responses 
are incomplete or inconsistent.
The same ten-element bilattice is employed by Sakama~\cite{Sak}.  
Prioritised default bilattices have also been applied to visual
 surveillance by Shet, Harwood and Davis~\cite{Shet2006}. 
In Section~\ref{sec:ClassKn} we introduce an infinite sequence of
 bilattices $\K_n$, as a means of modelling prioritised defaults.
Here $\K_0$, $\K_1$ and $\K_2$
are the four-, seven- and ten-element bilattices mentioned above
(equipped with a negation and appropriate constants).
Fig.~\ref{fig:Kn-t-order} depicts the knowledge 
and truth orders of $\K_n$, for general~$n$.

There  is a critical difference between $\FOUR$ and $\SEVEN$.  
In $\FOUR$, each of the four lattice operations 
distributes over each of the other three; 
in $\SEVEN$ (and also in the refinements we consider) this fails. 
 A bilattice with lattice operations  
$\{\oplus,\otimes\}$   (with associated order $\leq_k$) 
 and $\{\vee,\wedge\}$ (with associated order $\leq_t$)
 is \defn{interlaced} if each pair of lattice operations is monotonic
 with respect to the other order.  
This holds  in $\FOUR$. 
But  in $\SEVEN$ it fails, as is witnessed by   the fact that
 $\mbdT \le_k \mbt$ and 
$\mbdT \wedge \bot = 
\mathbf{df} \nleqslant_k \bot = \mbt \wedge \bot$.
The significance of the interlacing condition  is that it is
sufficient,  and also necessary, for the Product Representation 
Theorem to be valid:
any interlaced bilattice has as 
its underlying set a product $L \times L$ of a lattice~$L$ with itself; 
the lattice operations on the factors determine the bilattice
operations; negation sends a pair $(a,b)$ to $(b,a)$.  
For an account of the theorem  and its  complicated history,
see the recent note by Davey~\cite{BD13}.
This note gives a comprehensive list of  references both to the theorem itself
and to the way in which it is used to study interlaced bilattices. 

We conclude this introduction by summarising the content 
and structure of the paper and highlighting our principal results.  
We  focus  on mathematical aspects of default bilattices, 
 rather than logical aspects. 
 We shall consider $\K_n$ as an algebra of a specified type 
and investigate the variety  $\CV_n=\HSP(\K_n)$
 generated by $\K_n$, for an arbitrary value of~$n$.  
In Section~\ref{sec:ClassKn} we  derive the  properties 
of the algebras $\K_n$ on which our representation theory will rely.

Our primary tool, as in \cite{CP1}, will be the theory of
natural dualities, for which the text by Clark and Davey \cite{CD98} 
serves as the  background reference.
Here we need the multisorted version of the theory, 
as it applies to a restricted class of finitely generated 
lattice-based varieties. 
 Section~\ref{sec:multisorted} outlines, as far as possible in 
black-box style, rudiments of  this theory.  
The framework was  first developed more than~25 years ago, 
but examples of its exploitation are quite scarce.  
Theorem~\ref{Thm:DualityVarietyKn} 
describes our duality for~$\CV_n$,
an instructive new example of  the multisorted machinery at work. 
It also provides  us with a  springboard to our later results. 
 
In Theorem~\ref{Thm:CharDualVn} we 
describe the objects in the category  
dual  to~$\CV_n$: 
 these  multisorted topological structures 
are such that  each sort naturally   carries 
the  structure of a Priestley space, and  there is a sequence of maps
which links each sort to the next. 
In  Section~\ref{sec:ProdRep}, we derive 
 our  product representation theorem 
(Theorem~\ref{Thm:ProdRepDefault}).
The proof makes explicit use of 
the multisorted structure dual to a given algebra in $\CV_n$
to show how  the algebra can be obtained  from 
a product built from 
a finite sequence
 of distributive lattices with linking homomorphisms.    
Section~\ref{sec:ProdRep} can if desired be studied independently of
Section~\ref{sec:RevEng}. In the latter we connect directly
with Priestley duality, relating the natural dual space of an algebra
 in $\CV_n$ to  the Priestley dual of its (necessarily distributive)
knowledge lattice reduct.    
The paper concludes with a short section devoted to the
 quasivarieties $\ISP(\K_n)$. 
Here we can  employ duality theory in its single-sorted form.

\section{A hierarchy of varieties of prioritised default bilattices}
\label{sec:ClassKn}

The term `bilattice' is not used in a consistent way 
throughout the extensive bilattice literature.  
However nowadays it usually refers to  a structure which, 
besides  its two lattice structures  and, possibly, constants, 
carries also a negation operation, and we follow this usage. 
 Henceforth a \defn{bilattice} $\B$ will be  an algebraic structure 
$\B=(B; \otimes, \oplus,\wedge,\vee, \neg )$ 
such that the reducts $( B; \otimes,\oplus)$ and 
$(B; \wedge, \vee )$ are lattices and $\neg$ is a unary operation
 which preserves the $\{\otimes,\oplus\}$-order, 
reverses the $\{\wedge,\vee\}$-order, and is involutive.
We denote by $\le_k$ the order associated with 
$( B; \otimes, \oplus)$ and $\le_t$ the order associated with
 $(B; \wedge, \vee )$. 
We shall consider only bilattices in which 
both of the lattice orders are bounded. 
The bounds of the 
knowledge lattice are denoted by $\top$ and $\bot$, and those  
of the truth lattice by $\mbt$ and $\mbf$.

In an $n$-level prioritised default bilattice, 
the designated  default truth values form two finite sequences 
$ \mbf_0, \dots,  \mbf_n$ and  $\mbt_0, \dots, \mbt_n$, 
where $\mbf_{k+1}$ and $\mbt_{k+1}$  will
be lower in the knowledge order than their respective 
predecessor default truth values  $\mbf_k$ and $\mbt_k$.
The connotation is thus that knowledge represented by 
the truth values at level 
$k+1$ has lower priority than that from those at level~$k$. 
In addition, one thinks of  $\mbt_{k+1}$ as being `less true' than 
its predecessor $\mbt_{k}$, while 
$\mbf_{k+1}$ is `less false' than $\mbf_{k}$.  
That is, $\mbt_{k+1} \le_t \mbt_k$ and
 $\mbf_{k+1} \ge_t \mbf_{k}$.
Thus we view the  truth values hierarchically.

We now  describe  the $n$-level 
prioritised default bilattice $\K_n$
for $n \ge 0$.  
The underlying set of this algebra is 
\[
K_n = \{ 
\mbf_0,\ldots, \mbf_n, \mbt_0,\ldots,\mbt_n, {\top}_{\!\!0}, \ldots,
{\top}_{\!\!n+1}\}.
\]
We 
define lattice orders $\le_k$ and $\le_t$ on $K_n$ as follows.  
For $K_0$ and $v \in \{\mbf_0,\mbt_0\}$ we have $\top_{\!\!1} <_k v <_k \top_{\!\!0}$.
If $n \ge 1$ and 
$0 \le i < j \le n$, 
\begin{align*} 
&\top_{\!\!n+1} <_k  v_j <_k \top_{\!\! j} <_k 
v_i <_k \top_{\!\!i} \quad \text{for  $ v_j \in \{\mbf_j,\mbt_j\}$  and 
$v_i \in 
\{\mbf_i,\mbt_i\} $}, 
\intertext{
and if $0 \le i \le j \le n$,}
&\mbf_i \le_t \mbf_j <_t \top_{\!\!j} <_t \mbt_j \le_t \mbt_i;\,\mbox{ and  } \mbf_i <_t \top_{\!\!n+1} <_t \mbt_i. 
\end{align*} 

These lattice orders are depicted in Hasse diagram style 
in Fig.~\ref{fig:Kn-t-order}.
We shall, where appropriate, write  $\top$ for $\top_{\!\!0}$ 
and~$\bot $ for $\top_{\!\!n+1}$.

\begin{figure}[ht]
\begin{center}
\begin{tikzpicture}[scale=1]
\li{(1.25,1.25)--(2,2)--(2.75,1.25)}
\dotli{(1.25,1.25)--(2,0.5)--(2.75,1.25)}
\li{(1.25,-0.25)--(2,0.5)--(2.75,-0.25)}
\dotli{(1.25,-0.25)--(2,-1)--(2.75,-0.25)}
\li{(1.25,-1.75)--(2,-1)--(2.75,-1.75)}
\dotli{(1.25,-1.75)--(2,-2.5)--(2.75,-1.75)}
\li{(1.25,-3.25)--(2,-2.5)--(2.75,-3.25)--(2,-4)--(1.25,-3.25)}

\epo{2,2}
\epo{1.25,1.25}
\epo{2.75,1.25}
\epo{2,0.5}

\epo{1.25,-0.25}
\epo{2.75,-0.25}
\epo{2,-1}

\epo{1.25,-1.75}
\epo{2.75,-1.75}
\epo{2,-2.5}

\epo{1.25,-3.25}
\epo{2.75,-3.25}
\epo{2,-4}

\node at (2.7,2.1) {$\top_{\!\!0}=\top$};
\node at (0.8,1.2) {$\mbf_0$};
\node at (3.2,1.2) {$\mbt_0$};
\node at (2,0.9) {$\top_{\!\!i}$};
\node at (0.8,-0.3) {$\mbf_i$};
\node at (3.2,-0.3) {$\mbt_i$};
\node at (2.0,-0.65) {$\top_{\!\!j}$};
\node at (0.8,-1.75) {$\mbf_j$};
\node at (3.2,-1.75) {$\mbt_j$};
\node at (2.0,-2.1) {$\top_{\!\!n}$};
\node at (0.8,-3.25) {$\mbf_n$};
\node at (3.2,-3.25) {$\mbt_n$};

\node at (3,-4.1) {$\top_{\!\!n+1}=\bot$};

\dotli{(7.5,1)--(9.5,0)}
\dotli{(7.5,-3)--(9.5,-2)}
\li{(9.2,-2.15)--(9.2,0.15)}
\li{(10.5,-1.5)--(10.5,-0.5)}
\li{(10.5,-0.5)--(11.5,-1)--(10.5,-1.5)}
\li{(7.8,0.85)--(7.8,-2.85)}
\li{(6.5,1.5)--(6.5,-3.5)}
\dotli{(6.5,1.5)--(7.5,1)}
\dotli{(6.5,-3.5)--(7.5,-3)}
\dotli{(9.5,0)--(10.5,-0.5)}
\dotli{(9.5,-2)--(10.5,-1.5)}

\epo{9.2,0.15}
\epo{9.2,-1}
\epo{10.5,-0.5}
\epo{10.5,-1.5}
\epo{10.5,-1}
\epo{11.5,-1}
\epo{9.2,-2.15}

\epo{7.8,0.85}
\epo{7.8,-2.85}
\epo{7.8,-1}
\epo{6.5,1.5}
\epo{6.5,-3.5}
\epo{6.5,-1}

\node at (6.75,1.7) {$\mbt_0$};
\node at (5.7,-1) {$\top\!=\!\top_{\!\!0}$};
\node at (6.75,-3.7) {$\mbf_0$};

\node at (8.2,0.95) {$\mbt_i$};
\node at (7.4,-1) {$\top_{\!\!i}$};
\node at (8.2,-3.05) {$\mbf_i$};

\node at (9.3,0.45) {$\mbt_{j}$};
\node at (8.8,-1) {$\top_{\!\!j}$};
\node at (9.25,-2.55) {$\mbf_{j}$};

\node at (10.75,-1.8) {$\mbf_{n}$};
\node at (10.8,-0.3) {$\mbt_{n}$};
\node at (10.1,-1) {$\top_{\!\!n}$};

\node at (12.32,-1) {$\top_{\!\!n+1}\!=\!\bot$};

\end{tikzpicture}
\caption{
$K_n$
in its knowledge order (left) and truth order (right);  here  $0 < i < j < n$ \label{fig:Kn-t-order}}
\end{center}
\end{figure}
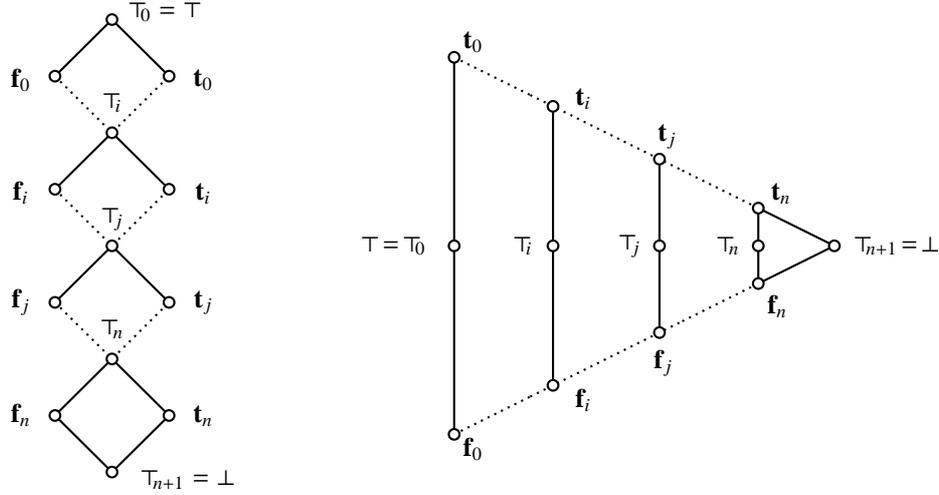

Negation is defined as follows:
\[
\neg \mbt_i= \mbf_i \text{ and } \neg\mbf_i = \mbt_i  \ \text{(for  
 $0 \leq i \le n$)}; \quad  
\neg a = a \text{ if } a \in \{ \top_{\!\!0}, \ldots, \top_{\!\!n+1}\}.
\]
The elements 
$\mbt_i$ and $\mbf_i$ are inter-definable via $\neg$.  
Elements of these types
contain only information about truth or falsity; they 
do not contain any information about contradiction or 
lack of information.
We observe that every element of $K_n$ is
recursively term-definable from 
$\top_{\!\!0}$ and $\top_{\!\!n+1}$.
For $m \in \{0,\ldots,n\}$ we have 
\[
\mbt_m=\top_{\!\!m} \vee \top_{\!\!n+1},\ \mbf_m=\top_{\!\!m} \wedge \top_{\!\!n+1} \quad \mbox{and} \quad 
\top_{\!\!m+1}=(\top_{\!\!m} \vee \top_{\!\!n+1})\otimes(\top_{\!\!m} \wedge \top_{\!\!n+1}).
\]
Note in particular  that  $\mbf$ and $\mbt$ are 
 $\top\wedge\bot$ and $\top\vee\bot$ respectively.

We now define  the algebra $\K_n$ to be  
$\K_n = ( K_n; \otimes, \oplus, \wedge, \vee, \neg, \bot,\top)$.
It has  a term-definable bounded bilattice structure.   
The cases $n=0$ and $n=~1$ 
 deserve special mention.  The bilattice reducts of $\K_0$
and $\K_1$ are, up to the labelling of the elements, simply 
$\mathcal{FOUR}$ and $\mathcal{SEVEN}$.  Moreover, $\K_1$ is term-equivalent to the algebra 
${\four=(\{\top,\bot,\mbt,\mbf\};\wedge, \vee, \neg, \mbt, \mbf,\bot,\top)}$ 
introduced in \cite[Section~2]{CP1}; 
here $\neg$ switches $\mbt$ and $\mbf$, 
and fixes $\top$ and $\bot$. 
%%%%
%
%
The algebras $\K_n$ are all of the same algebraic type.  
All have the special property that their knowledge reducts 
are bounded distributive lattices and 
the same is true of the algebras in  $\CV_n$, 
 for any $n$. 
But for $n> 0$ the truth lattice reduct of~$\K_n$ is not distributive.  
In what follows we shall take a fixed $n \geq 0$;
we include $n=0$ to emphasise that $\K_0$ 
fits into our general scheme. 
However  our results below give nothing new in this case.

In our development of the properties of 
the algebras $\K_n$ and the varieties $\CV_n$ that they generate, 
we shall occasionally need to draw on  basic  
facts from universal algebra, for example concerning 
congruences and subdirectly irreducible algebras; 
\cite{BS81} provides a good background reference for such material.
Our first observation is a triviality:  since each element of $\K_n$ 
is a term, $\K_n$ has no proper subalgebras.  
We shall next investigate  the subalgebras of  $\K_n^2$.  
The characterisation we obtain in Theorem~\ref{thm:Snmonly} 
will be of  crucial importance for setting up our dualities.  
As a byproduct, we are able to  identify 
the subdirectly irreducible algebras in  the variety $\CV_n$ 
and thence obtain  a complete description of its 
lattice of subvarieties.

We let $\Delta_n$ denote the diagonal subalgebra  
$\{\, (a,a) \mid a\in K_n\,\}$.  
We now define subsets $S_{n,n}, \dots, S_{n,0}$ of $K_n^2$ 
as follows:
\[
 S_{n,m} = \Delta_n \cup 
\{\, (a,b)  \mid a, b \le_k\! {\top}_{\!\!m+1} \mbox{ or }a \le_k b \le_k \!{\top}_{\!\!m}\,\}  \quad \text{(for  $0 \leq m  \le n$)}.
 \]
For $0 \le i < j \leq n$ we have $S_{n,j} \subsetneqq S_{n,i}$.

Later we shall want to view 
 $S_{n,m}$ as a 
binary relation on $K_n$.  
It is easily seen that, as such, it is always a quasi-order, 
and a partial order if and only if $m=n$.
The partial orders $S_{m,m}$, for $m \leq n$,
appear in  our duality theory for $\CV_n$ in Section~\ref{sec:varKn} and 
the relations $S_{n,m}$, for $m\leq n$,  are employed in the duality 
for $\ISP(\K_n)$ presented in Section~\ref{sec:dualitiesKn}.  
By way of illustration, Fig.~\ref{fig:Snm} shows $S_{0,0}$, $S_{1,1}$ and $S_{2,2}$, and also 
$S_{2,0}$ and $S_{2,1}$. 

\begin{figure}[ht]
\begin{center}
\begin{tikzpicture}[scale=0.56]
\path (-18.5,-6) node(ai) [rectangle,draw, minimum size=1.5pt, inner sep=3pt] {$\scriptstyle{\top_{\!\!1}}$}
			(-20,-4.5) node(aj) [rectangle,draw, minimum size=1.5pt, inner sep=3pt] {$\scriptstyle{\mbf_0}$}
			(-17,-4.5) node(ak) [rectangle,draw, minimum size=1.5pt, inner sep=3pt] {$\scriptstyle{\mbt_0}$}
			(-18.5,-3) node(al) [rectangle,draw, minimum size=1.5pt, inner sep=3pt] {$\scriptstyle{\top_{\!\!0}}$}
			(-21.3,-6) node (am) {$S_{0,0}$}
			(-9.5,-6) node(a) [rectangle,draw, minimum size=1.5pt, inner sep=3pt] {$\scriptstyle{\top_{\!\!2}}$}
			(-11,-4.5) node(b) [rectangle,draw, minimum size=1.5pt, inner sep=3pt] {$\scriptstyle{\mbf_1}$}
			(-8,-4.5) node(c) [rectangle,draw, minimum size=1.5pt, inner sep=3pt] {$\scriptstyle{\mbt_1}$}
			(-9.5,-3) node(d) [rectangle,draw, minimum size=1.5pt, inner sep=3pt] {$\scriptstyle{\top_{\!\!1}}$}
			(-11,-2) node(e) [rectangle,draw, minimum size=1.5pt, inner sep=3pt] {$\scriptstyle{\mbf_0}$}
			(-8,-2) node(f) [rectangle,draw, minimum size=1.5pt, inner sep=3pt] {$\scriptstyle{\mbt_0}$}
			(-9.5,-1) node(g) [rectangle,draw, minimum size=1.5pt, inner sep=3pt] {$\scriptstyle{\top_{\!\!0}}$}
			(-12.3,-6) node(h) {$S_{1,1}$}
			(-0.5,-6) node(i) [rectangle,draw, minimum size=1.5pt, inner sep=3pt] {$\scriptstyle{\top_{\!\!3}}$}
			(-2,-4.5) node(j) [rectangle,draw, minimum size=1.5pt, inner sep=3pt] {$\scriptstyle{\mbf_2}$}
			(1,-4.5) node(k) [rectangle,draw, minimum size=1.5pt, inner sep=3pt] {$\scriptstyle{\mbt_2}$}
			(-0.5,-3) node(l) [rectangle,draw, minimum size=1.5pt, inner sep=3pt] {$\scriptstyle{\top_{\!\!2}}$}
			(-0.5,-1) node(m) [rectangle,draw, minimum size=1.5pt, inner sep=3pt] {$\scriptstyle{\top_{\!\!1}}$}
			(1,-2) node(n) [rectangle,draw, minimum size=1.5pt, inner sep=3pt] {$\scriptstyle{\mbt_1}$}
			(-2,-2) node(o) [rectangle,draw, minimum size=1.5pt, inner sep=3pt] {$\scriptstyle{\mbf_1}$}
			(-2,0) node(p) [rectangle,draw, minimum size=1.5pt, inner sep=3pt] {$\scriptstyle{\mbf_0}$}
			(1,0) node(q) [rectangle,draw, minimum size=1.5pt, inner sep=3pt] {$\scriptstyle{\mbt_0}$}
			(-0.5,1) node(r) [rectangle,draw, minimum size=1.5pt, inner sep=3pt] {$\scriptstyle{\top_{\!\!0}}$}
			(-3.3,-6) node(u) {$S_{2,2}$}
			(-14,-14) node(v) [rectangle,draw, minimum size=1.5pt, inner sep=3pt] {$\scriptstyle{\top_{\!\!2},\mbf_2,\mbt_2,\!\top_{\!\!3}}$}
			(-15.5,-12.5) node(w) [rectangle,draw, minimum size=1.5pt, inner sep=3pt] {$\scriptstyle{\mbf_1}$}
			(-12.5,-12.5) node(x) [rectangle,draw, minimum size=1.5pt, inner sep=3pt] {$\scriptstyle{\mbt_1}$}
			(-14,-11) node(y) [rectangle,draw, minimum size=1.5pt, inner sep=3pt] {$\scriptstyle{\top_{\!\!1}}$}
			(-15.5,-10) node(z) [rectangle,draw, minimum size=1.5pt, inner sep=3pt] {$\scriptstyle{\mbf_0}$}
			(-12.5,-10) node(aa) [rectangle,draw, minimum size=1.5pt, inner sep=3pt] {$\scriptstyle{\mbt_0}$}
			(-14,-9) node(ab) [rectangle,draw, minimum size=1.5pt, inner sep=3pt] {$\scriptstyle{\top_{\!\!0}}$}
			(-17,-14) node(ac) {$S_{2,1}$}
			(-5,-14) node(ad) [rectangle,draw] {$\scriptstyle{\top_{\!\!1},\mbf_1,\mbt_1,\!\top_{\!\!2},\mbf_2,\mbt_2,\!\top_{\!\!3}}$}
			(-6.5,-12.5) node(ae) [rectangle,draw, minimum size=1.5pt, inner sep=3pt] {$\scriptstyle{\mbf_0}$}
			(-3.5,-12.5) node(af) [rectangle,draw, minimum size=1.5pt, inner sep=3pt] {$\scriptstyle{\mbt_0}$}
			(-5,-11) node(ag) [rectangle,draw, minimum size=1.5pt, inner sep=3pt] {$\scriptstyle{\top_{\!\!0}}$}
			(-8.2,-14) node(ah) {$S_{2,0}$};
\draw (ai)--(aj)--(al)--(ak)--(ai);
\draw (a)--(b)--(d)--(c)--(a);
\draw (i)--(j)--(l)--(k)--(i);
\draw (v)--(w)--(y)--(x)--(v);
\draw (ad)--(ae)--(ag)--(af)--(ad);
\end{tikzpicture}
\caption{The relations $S_{0,0}$, $S_{1,1}$, $S_{2,2}$, $S_{2,1}$ and $S_{2,0}$ drawn as quasi-orders\label{fig:Snm}}
\end{center}
\end{figure}
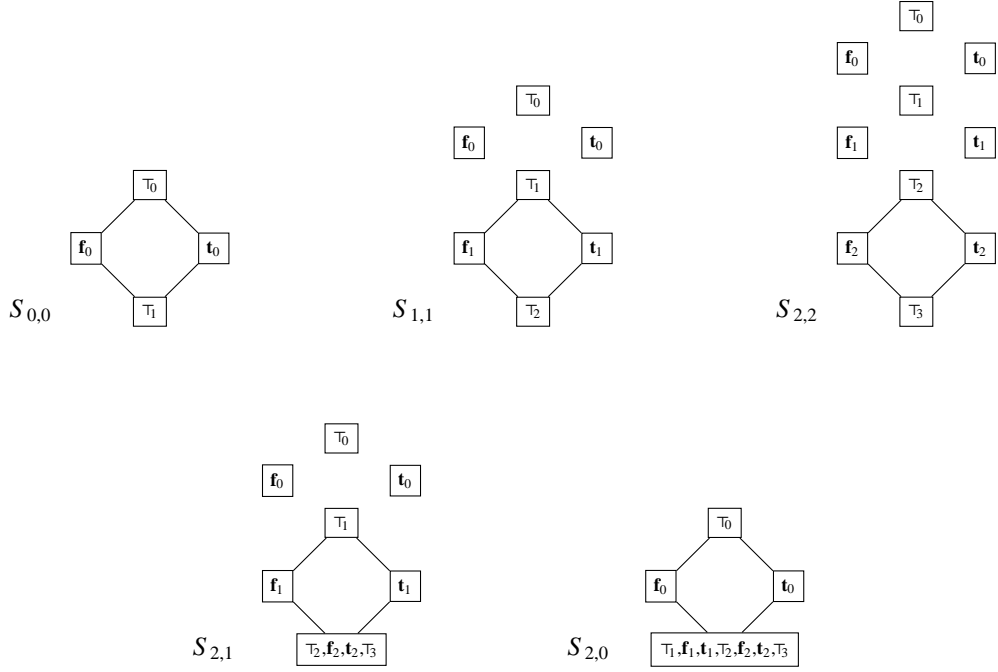

For $m \leq n$ we let $h_{n,m} \colon K_n \to K_m$ be given, 
for $a \in K_n$,  by  
\[
h_{n,m}(a)=
\begin{cases}
\top_{\!\!m+1} &\text{if } a \le_k \top_{\!\!m+1},\\
a&\text{otherwise}.
\end{cases}
\]
Here $h_{n,n}$ is just the identity map on $\K_n$.  

\begin{prop}\label{prop:Snn-subalg} 
For $m$ such that\, $0 \le m\le n$, 
let $S_{n,m} $ and $h_{n,m}$ be defined as above.  Then the following statements hold. 
\begin{newlist}
\item[{\upshape (i)}] 
$S_{n,m} = \{\,(a,b)\in K_n^2 \mid  (h_{n,m}(a),h_{n,m}(b))\in S_{m,m} \,\}$.
\item[{\upshape (ii)}]  $h_{n,m}$ is a surjective homomorphism.
\item[\upshape {(iii)}]  
$S_{n,m}$, with the inherited operations, forms a subalgebra\, $\S_{n,m}$ of\, $\K_n^2$.
\item[{\upshape (iv)}] 
$\mathbb{S}(\K_{i}\times\K_{j})=
\{(h_{n,i}\times h_{n,j})(\S)\colon \S\in\mathbb{S}(\K_n^2)\}$, 
for $i,j \in \{0,\ldots,n\}$.

\end{newlist}
\end{prop}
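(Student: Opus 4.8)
The natural order of proof is (i), (ii), (iii), (iv), with~(i) a purely order-theoretic computation, (ii) the technical heart, and~(iii),~(iv) formal consequences. For~(i), the plan is a case analysis on the positions of $a$ and $b$ in the knowledge order relative to $\top_{m+1}$. The knowledge lattice of $\K_n$ is a chain of diamonds whose necks are $\top_0>_k\top_1>_k\cdots>_k\top_{n+1}$, so every element is $\le_k$-comparable with each $\top_\ell$; in particular each of $a,b$ is either $\le_k\top_{m+1}$ or $>_k\top_{m+1}$, and $h_{n,m}$ sends the former to the bottom $\top_{m+1}=\bot$ of $K_m$ while fixing the latter. Distinguishing the four cases according to the side of $\top_{m+1}$ on which $a$ and $b$ lie, and using that anything $\le_k\top_{m+1}$ lies $\le_k$-below everything $>_k\top_{m+1}$, one checks that $(a,b)\in S_{n,m}$ is in each case equivalent to $(h_{n,m}(a),h_{n,m}(b))\in S_{m,m}$; no algebra enters here.

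For~(ii), surjectivity is immediate, since $h_{n,m}$ fixes each of $\top_0,\dots,\top_m$, $\mbf_0,\dots,\mbf_m$, $\mbt_0,\dots,\mbt_m$ and sends everything else to $\bot$. For the homomorphism property I would verify that $h_{n,m}$ preserves each operation; organised via the kernel, this amounts to showing that the partition $\pi_m$ of $K_n$ whose sole non-singleton block is $L=\{x\mid x\le_k\top_{m+1}\}$ is a congruence, the quotient then being $\K_m$ under the evident identification of orders. Compatibility with $\otimes$ and $\oplus$ is straightforward, using that $L$ is a knowledge ideal and that every element is $\le_k$-comparable with the neck $\top_{m+1}$, while compatibility with $\neg$ follows from $\neg$ preserving $\le_k$ and fixing $\top_{m+1}$, so that $\neg L=L$. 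The truth operations are the crux, and the step I expect to be the main obstacle, since they fail to respect $\le_k$ (this is exactly the failure of interlacing) and so must be treated through the explicit truth order. The decisive structural fact is that $L$ is simultaneously the principal knowledge ideal below $\top_{m+1}$ and the truth-order interval from $\mbf_{m+1}$ to $\mbt_{m+1}$, so $L$ is closed under $\wedge$ and $\vee$; moreover, for a fixed $c\notin L$ the truth-meet $x\wedge c$, as $x$ ranges over $L$, is either equal to $x$ throughout (precisely when $c$ is truth-above every element of $L$) or else independent of $x$. In either case $a\wedge c$ and $b\wedge c$ are $\pi_m$-related whenever $a,b\in L$; the argument for $\vee$ is dual via $\neg$, and transitivity of $\pi_m$ then gives the substitution property.

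Part~(iii) is now formal: by~(i), $S_{n,m}=(h_{n,m}\times h_{n,m})^{-1}(S_{m,m})$, and by~(ii) the map $h_{n,m}\times h_{n,m}\colon\K_n^2\to\K_m^2$ is a homomorphism, so $\S_{n,m}$ is the preimage of $\S_{m,m}$ under a homomorphism and it suffices to treat the base case that $\S_{m,m}$ is a subalgebra of $\K_m^2$. For that I would check closure directly: the part of $S_{m,m}$ lying inside the bottom diamond $D=\{\bot,\mbf_m,\mbt_m,\top_m\}$ is exactly the knowledge order of $D$, and since $D$ is a copy of the distributive bilattice $\K_0$ its operations are $\le_k$-monotone, so this part is closed; the remaining cases, where one coordinate is a diagonal pair coming from a higher diamond, collapse by the very dichotomy used in~(ii), now with $D$ in the role of $L$. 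Finally~(iv) follows from~(ii): since $h_{n,i}$ and $h_{n,j}$ are surjective homomorphisms, so is $\phi:=h_{n,i}\times h_{n,j}\colon\K_n^2\to\K_i\times\K_j$. The image under $\phi$ of any subalgebra of $\K_n^2$ is a subalgebra of $\K_i\times\K_j$, giving one inclusion; conversely, any subalgebra $\str{T}$ of $\K_i\times\K_j$ satisfies $\str{T}=\phi(\phi^{-1}(\str{T}))$ with $\phi^{-1}(\str{T})$ a subalgebra of $\K_n^2$, by surjectivity of $\phi$, giving the reverse inclusion.
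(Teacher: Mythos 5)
Your proposal is correct and follows essentially the same route as the paper: (i) by direct order computation, (ii) by reducing to the assertion that the kernel of $h_{n,m}$ is a congruence, (iii) by establishing the base case $\S_{m,m}\le\K_m^2$ and pulling back along the homomorphism $h_{n,m}\times h_{n,m}$, and (iv) from surjectivity of the product map $h_{n,i}\times h_{n,j}$. The only divergences are in how the routine verifications are organised --- the paper checks the kernel congruence via convex sublattices and the quadrilateral property and proves the base case by element-wise case analysis, whereas you use the substitution-property dichotomy for the truth operations and the observation that the bottom diamond is an interlaced copy of $\K_0$ --- and these are cosmetic rather than substantive differences.
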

\begin{proof}  
Part (i) is straightforward. We  now prove (ii). 
If $b \le_k{ \top}_{\!\!m+1}$ we have  
$\neg b \le_k {\top}_{\!\!m+1}$.  
Hence $\neg h_{n,m}(b)= 
\neg (\top_{\!\!m+1})=\top_{\!\!m+1} = h_{n,m}(\neg b)$.
Obviously $h_{n,m}$ preserves the constants $\top$ and $\bot$.
It now suffices to show that 
\[
\ker h_{n,m}=
\Delta_n \cup\{\,(a,b)\mid a,b\le_k {\top}_{\!\!m+1}\,\}
\]
is a lattice congruence for both lattice orders on $K_n$.
This can be done by a routine check that the equivalence 
classes are convex sublattices satisfying the quadrilateral property 
(see for example \cite[Chapter~6]{ILO2}).

We now prove (iii).
Consider  first the case $m=n$. 
We recall that $S_{n,n}=
\Delta_n \cup \{\,(a,b) \mid a \le_k b\le_k \top_{\!\!n}\,\}$.
Fig.~\ref{fig:Kn-t-order} provides a useful guide to performing
 bilattice operations on elements of $S_{n,n}$. 
Let $(a,b)$, $(c,d)\in S_{n,n}$. 
If $a,b,c,d\le_k {\top}_{\!\!n}$, then 
$a\star c\le_k b\star d\le_k {\top}_{\!\!n}$ for 
$\star\in\{\otimes,\oplus,\wedge,\vee\}$, 
that is, $(a\star c, b\star d)\in S_{n,n}$. 
If $a>_{k}{\top}_{\!\!n}$ then $b=a$. If $a=\mbf_i$ or $a=\mbt_i$ for $i <n$, then it is straightforward to check that 
$(a\star c,a \star d) \in \{(a,a),(c,d)\} \subseteq S_{n,n}$ 
for $\star\in\{\otimes,\oplus,\wedge,\vee\}$.  
If $a>_{k}{\top}_{\!\!n}$ and $a={\top}_{\!\!i}$ for $i < n$, then 
$(a\star c,a \star d) \in \{(a,a),(c,d),(\mbt_i,\mbt_i),(\mbf_i,\mbf_i)\} \subseteq S_{n,n}$ depending on whether $\star$ is 
$\oplus,\otimes, \vee$ or $\wedge$. 
Thus  $S_{n,n}$ is closed under 
 $\otimes$, $\oplus$, $\wedge$, and $\vee$. 
Also, if $(a,b)\in  S_{n,n}$ and $a\neq b$ then 
$ a\le_k b\le_k{\top}_{\!\!n}$. 
Since $\neg$ preserves $\le_k$, it follows that   
$ \neg a\le_k \neg b\le_k\neg {\top}_{\!\!n}={\top}_{\!\!n}$, 
that is, $(\neg a,\neg b)\in S_{n,n}$. 
Finally,  the pairs $(\bot,\bot)$ and $(\top,\top)$
are in $S_{n,n}$. 
So $\S_{n,n}\in\mathbb{S}(\K_n^2)$ for each $n$.  

Now assume $m < n$. 
The product map 
$(h_{n,m} \times h_{n,m}) \colon \K_n^2\to\K_m^2$ defined by
 $(a,b)\mapsto (h_{n,m}(a),h_{n,m}(b))$ is a homomorphism. 
By~(i), $S_{n,m}$ is the inverse image of $S_{m,m}$ under 
$h_{n,m} \times h_{n,m}$.
Using the fact that 
$\S_{m,m} \in \mathbb{S}(\K_m^2)$, we 
deduce that $\S_{n,m} \in \mathbb{S}(\K_n^2)$. 

To prove (iv)  observe that the product map
$(h_{n,i}\times h_{n,j})\colon \K_{n}^2\to \K_{i}\times\K_{j}$ 
is a surjective homomorphism.
\end{proof}

We are now ready to identify all the subalgebras of $\K_n^2$.  We observe 
that, for $n > 0$,  neither $\leq_k$ nor $\leq_t$ is the universe of such an algebra.
This can be attributed to  the failure of the interlacing condition.

\begin{thm}\label{thm:Snmonly}
Let\, $\S$ be a subalgebra of\, $\K_n^2$.  Then either\, $\S = \K_n^2$ or there exists  $m \in \{0,\ldots,n\}$ 
such that \,$\S$ is one of the following:
$ \S_{n,m}$,  $\conv{\S}_{n,m}$,
$\S_{n,m} \cap \conv{\S}_{n,m}$.
{\upshape(}For $r$ a binary relation, $\conv{r}$ denotes its 
converse.{\upshape)}\end{thm}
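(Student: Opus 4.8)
The plan is to classify all \emph{reflexive} subalgebras and then to notice that every subalgebra is reflexive. The key opening reduction is this: since the text shows that each element of $K_n$ is a constant term in $\top$ and $\bot$, the pair $(a,a)$ is the value of that term in $\K_n^2$ and so lies in every subalgebra. Hence $\Delta_n\subseteq\S$ for every $\S\in\mathbb{S}(\K_n^2)$, i.e. $\S$ is reflexive. Two bookkeeping facts then make the target list manageable. The coordinate swap is an automorphism of $\K_n^2$, so $\conv{\S}$ is again a subalgebra and the whole list is closed under $r\mapsto\conv{r}$; and subalgebras are closed under intersection, with $\S_{n,m}\cap\conv{\S}_{n,m}=\ker h_{n,m}$ a direct computation from the definitions (cf. Proposition~\ref{prop:Snn-subalg}). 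Thus it suffices to determine $\S$ up to converse, and the genuinely symmetric case will land on the congruences $\ker h_{n,m}$.

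The engine of the argument is a \emph{generation lemma}: an off-diagonal pair that is incomparable in $\le_k$ forces a whole symmetric block into $\S$. Concretely, if $(a,b)\in\S$ with $a\not\le_k b$ and $b\not\le_k a$, then $\S\supseteq\ker h_{n,m}=\Delta_n\cup\{(x,y)\mid x,y\le_k\top_{m+1}\}$ for the level $m$ determined by the knowledge-join of $a$ and $b$. I would prove this first on the bottom diamond $\{\top_n,\mbf_n,\mbt_n,\bot\}$, which with the induced operations is isomorphic to $\FOUR=\K_0$: from an incomparable pair one obtains its $\neg$-image and then, by combining with diagonal constants under $\otimes,\oplus,\wedge,\vee$, every pair of that diamond; the general case follows by pushing these pairs downward with unary polynomials. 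The involutivity of $\neg$ is what symmetrises the block, and the \emph{failure of interlacing} exhibited in the Introduction is exactly what prevents an incomparable pair from sitting in $\S$ without dragging in its partners.

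The global organisation I would run as an induction on $n$ using $h_{n,n-1}$, whose kernel collapses only the bottom diamond. Given $\S\le\K_n^2$, its image $(h_{n,n-1}\times h_{n,n-1})(\S)$ is a subalgebra of $\K_{n-1}^2$ and is classified by the inductive hypothesis, while $\S$ restricted to the bottom diamond is a reflexive subalgebra of $\FOUR^2$. The base case is a finite check: the reflexive subalgebras of $\FOUR^2$ are precisely $\Delta_0$, $\le_k$, $\ge_k$ and $\FOUR^2$ --- here $\le_t$ is excluded because $\neg$ reverses it --- and these are $\S_{0,0}\cap\conv{\S}_{0,0}$, $\S_{0,0}$, $\conv{\S}_{0,0}$ and $\K_0^2$.

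The main obstacle is the reconstruction of $\S$ from its quotient and its bottom-diamond trace: one must show that the ``mixed'' pairs (one coordinate inside the collapsed diamond, one outside) are forced, so that no hybrid of an order at one level with an inconsistent order or block at another survives closure under $\wedge,\vee,\neg$. The controlling invariant is the least $\top_m$ such that every off-diagonal pair of $\S$ has both coordinates $\le_k\top_m$: above $\top_m$ the relation is diagonal; at and below $\top_m$ the generation lemma forces either a single symmetric block (giving $\ker h_{n,m}$) or, once some strict $\le_k$-comparability reaches up to $\top_m$, a consistently oriented order giving $\S_{n,m}$ or $\conv{\S}_{n,m}$; and should an incomparable pair ever reach the top diamond $\{\top,\mbf_0,\mbt_0,\top_1\}$, the block fills all of $K_n^2$ and $\S=\K_n^2$. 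The delicate step throughout is ruling out an orientation reversal between consecutive levels, since such a reversal produces (via $\wedge$ and $\vee$) an incomparable pair and hence, by the generation lemma, a block that contradicts the minimality of the ceiling $\top_m$.
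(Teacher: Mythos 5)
Your opening reductions are sound and agree with the paper's: $\Delta_n$ lies in every subalgebra because $\K_n$ is generated by $\{\bot,\top\}$, the target list is closed under converse, and $\S_{n,m}\cap\conv{\S}_{n,m}=\ker h_{n,m}$. Your generation lemma for $\le_k$-incomparable pairs is also correct and provable along the lines you indicate. But there is a genuine gap at the technical core: that lemma only tells you what an \emph{incomparable} off-diagonal pair forces, whereas the relations $\S_{n,m}$ and $\conv{\S}_{n,m}$ are produced by \emph{comparable} off-diagonal pairs, and your final paragraph simply asserts that ``the generation lemma forces \dots\ a consistently oriented order giving $\S_{n,m}$ or $\conv{\S}_{n,m}$''. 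That does not follow from the lemma as stated. What is needed---and what occupies Claims 3 and 4 of the paper's proof---is a separate derivation showing that a single pair $(\top_{\!\!i},c)$ with $c<_k\top_{\!\!i}$ forces \emph{every} pair $(a,b)$ with $a\le_k\top_{\!\!i}$ and $b\le_k\top_{\!\!i+1}$ into $S$; this rests on non-obvious term constructions such as $(\top_{\!\!n+1},\top_{\!\!i+1})=((\bot,\bot)\vee(\mbf_i,\top_{\!\!i+1}))\oplus((\bot,\bot)\wedge(\mbt_i,\top_{\!\!i+1}))$, which exploit the failure of interlacing and do not reduce to ``combining with diagonal constants''. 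The same omission hides why, when oriented pairs reach up to level $j$ and no higher, the full symmetric block must begin exactly at level $j+1$ (the paper's deduction $j=i-1$ in Case 3) and why $(\mbf_j,\mbt_j)\notin S$ there. Moreover, your claim that an orientation reversal between levels produces an incomparable pair ``via $\wedge$ and $\vee$'' is not right as stated: for instance $(\top_{\!\!1},\top_{\!\!0})\vee(\top_{\!\!0},\top_{\!\!1})=(\mbt_0,\mbt_0)$ is diagonal. The paper instead obtains $(\mbt_0,\mbf_0)=(\mbt_0,\top_{\!\!1})\oplus(\top_{\!\!1},\mbf_0)$ from pairs already manufactured by its Claim 4 applied on both sides.

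Two further, more minor, points. The bottom diamond $\{\top_{\!\!n},\mbf_n,\mbt_n,\bot\}$ is not a subalgebra of $\K_n$ for $n\ge1$ (it omits the constant $\top$), so ``$\S$ restricted to the bottom diamond is a reflexive subalgebra of $\FOUR^2$'' must be rephrased in terms of the constant-free reduct; this is harmless, since reflexivity supplies $(\bot,\bot)$ and the binary operations preserve the diamond. More substantively, the induction on $n$ you set up is never really exploited: knowing $(h_{n,n-1}\times h_{n,n-1})(\S)$ together with the bottom-diamond trace does not by itself determine $\S$, and the ``controlling invariant'' you fall back on for the mixed pairs is a direct classification in the spirit of the paper's bookkeeping with $\bigoplus A=\top_{\!\!i}$ and $\bigoplus B=\top_{\!\!j}$---so the inductive scaffolding adds little, while the direct argument it defers to is exactly the part left unproved. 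The proposal is salvageable, but only by supplying the Claim 3/Claim 4 computations, which are the heart of the matter.
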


\begin{proof}
The fact that $\K_n$ is generated by $\{\bot,\top\}$ implies that $\Delta_n$ is generated by $\{(\bot,\bot),(\top,\top)\}$. Hence $\Delta_n$ is contained in every subalgebra of $\K_n^2$.
Let 
\[ 
A = \{\top_{\!\!n+1}\}\cup
\{\, a \mid \exists\, b  \text{ such that }a>_k b\mbox{ and } (a,b) \in S \,\} \text{ \ \ and \ \ } 
B=\{\top_{\!\!n+1}\}\cup 
\{\, b \mid \exists\,a \text{ such that } a<_k b\mbox{ and }(a,b) \in 
S\,\}.\]
Claims 1--4 below concern $A$.   These claims, together with 
corresponding results for $B$ obtained by swapping the coordinates, 
will be combined to prove the theorem.

\begin{claimslist}
\item[{\bf Claim 1}:] 
$\bigoplus A\in A$. 

If $A=\{\top_{\!\!n+1}\}$ 
the result is trivial. If $a,b \in A$ are such that $a\not\le_k b $ 
and $b \not\le_k a$ then there exists $i\in\{1,\ldots n\}$ such that 
$\{a,b \}=\{\mbf_i,\mbt_i\}$. Let $b,b'$  be such that 
$(a,b)$, $(b ,b')\in
S$, $b<_k a$ and $b'<_k b $. 
Then $b\oplus b'\le_k \top_{\!\!i+1}<_k \top_{\!\!i}=a\oplus b $. 
We also have $(a\oplus b ,b\oplus b') = (a,b)\oplus(b ,b')\in
S$, 
Hence $a\oplus b \in A$. Thus $A$ is a finite set closed under
$\oplus$, and consequently  $\bigoplus A\in A$.  

\item[{\bf Claim 2}:]  
$\bigoplus A \in \{\top_{\!\!0},\ldots, \top_{\!\!n+1}\}$.

Since $S$ is closed under $\neg$ and $\neg$ preserves $\le_k$, 
we have $\mbf_i\in A$ if and only if $\mbt_i\in A$. 
This, combined with Claim 1, 
implies that if $\mbf_i\in A$ or $\mbt_i\in A$ 
then ${\top}_{\!\!i}\in A$.   

\item[{\bf Claim 3}:] 
If $\bigoplus A={\top}_{\!\!i}$ 
for some $i\in\{0,\ldots,n\}$, then 
$(\top_{\!\!i}, \mbf_i)$, $(\top_{\!\!i}, \mbt_i)$, $({\top}_{\!\!i},
{\top}_{\!\!i+1})$, $(\mbf_i,\top_{\!\!i+1})$ and  $(\mbt_i,\top_{\!\!i+1})$ 
all belong to $S$.

Let $b<_k {\top}_{\!\!i}$ be such that $(\top_{\!\!i},b)\in S$. 
If $b\in\{\mbf_i,\mbt_i\}$, since 
$S$ is closed under negation,
$({\top}_{\!\!i},\mbf_i),({\top}_{\!\!i},\mbt_i)\in S$. 
If $b\notin \{\mbf_i,\mbt_i\}$ then $b\le_k {\top}_{\!\!i+1}$, and 
$({\top}_{\!\!i},\mbf_i)=({\top}_{\!\!i},b)\oplus(\mbf_i,\mbf_i)\in S$
 and  
$({\top}_{\!\!i},\mbt_i)=({\top}_{\!\!i},b)\oplus(\mbt_i,\mbt_i)\in S$.
We also have 
$({\top}_{\!\!i},\top_{\!\!i+1})=
({\top}_{\!\!i},\mbf_i)\otimes({\top}_{\!\!i},\mbt_i)\in S$. 
And finally 
$ (\mbf_i,{\top}_{\!\!i+1})=({\top}_{\!\!i},{\top}_{\!\!i+1})\otimes(\mbf_i,\mbf_i),(\mbt_i,{\top}_{\!\!i+1})
=({\top}_{\!\!i},{\top}_{\!\!i+1})\otimes
(\mbt_i,\mbt_i) \in S$.

\item[{\bf Claim 4}:]
If $\bigoplus A={\top}_{\!\!i}$ for some 
$i\in\{0,\ldots,n\}$, then $(a,b)\in S$ for each 
$a\le_k {\top}_{\!\!i}$ and $b\le_k \top_{\!\!i+1}$. 

By Claim 3,  
$(\mbf_i,{\top}_{\!\!i+1}),(\mbt_i,{\top}_{\!\!i+1})\in S$ and 
\[
(\top_{\!\!n+1},{\top}_{\!\!i+1})=
((\top_{\!\!n+1},\top_{\!\!n+1})\vee(\mbf_i,{\top}_{\!\!i+1}))\oplus
 ((\top_{\!\!n+1},\top_{\!\!n+1})\wedge(\mbt_i,{\top}_{\!\!i+1}))\in S.
\]
Again by Claim 3, $({\top}_{\!\!i},\mbf_i),({\top}_{\!\!i},\mbt_i)\in S$, 
and then 
$({\top}_{\!\!i},\top_{\!\!n+1})=
((\top_{\!\!n+1},\top_{\!\!n+1})\vee({\top}_{\!\!i},\mbf_i))\oplus 
((\top_{\!\!n+1},\top_{\!\!n+1})\wedge({\top}_{\!\!i},\mbt_i))\in S$.
Finally, if  $a\le_k {\top}_{\!\!i}$ and $b\le_k {\top}_{\!\!i+1}$, then 
$(a,b)=((a,a)\oplus(\top_{\!\!n+1},{\top}_{\!\!i+1}))\otimes
((b,b)\oplus({\top}_{\!\!i},\top_{\!\!n+1}))\in S$.
\end{claimslist}

We are now ready to prove the main result.
 Let $i,j\in\{0,\ldots,n+1\}$ be such that 
$\bigoplus A={\top}_{\!\!i}$ and $\bigoplus B={\top}_{\!\!j}$.
We now have four  cases, taking  account of how~$i$ and~$j$ 
are related.

\begin{caseslist}
\item[{\bf Case 1}:]
Assume  $i=j =0$.  Then $\S=\K_n^2$.   

By Claims 3 and 4,  $S_{n,0}\,\cup\,\conv{S}_{n,0}\subseteq S$. 
Moreover 
$(\mbt_0,\mbf_0)
=(\mbt_0,\top_{\!\!1})\oplus(\top_{\!\!1},\mbf_0)\in S$ 
and similarly $(\mbf_0,\mbt_0)\in S$. 

\item[{\bf  Case 2}:]
Assume  $i=j>  0$. 
Then $\S=
\S_{n,i-1}\cap\conv{\S}_{n,i-1}$.

By definition of $A$ and $B$, we have 
$S\subseteq ({\downarrow}_k \top_{\!\!i})^2\cup\Delta_n=
S_{n,i-1} \cap \conv{S}_{n,i-1}$, where
${\downarrow}_k \top_{\!\!i}=\{c\in K_n\mid c\leq_k \top_{\!\!i}\}$.
By Claims 3 and 4, $S_{n,i}\cup\conv{S}_{n,i}\subseteq S$.
Moreover  $(\mbt_i,\mbf_i)=(\mbt_i,\!\top_{\!\!i+1})\oplus(\top_{\!\!i+1},\mbf_i)\in S$ 
and similarly $(\mbf_i,\mbt_i)\in S$. 
Thus 
$S\subseteq ({\downarrow}_k \top_{\!\!i})^2\cup\Delta_n\subseteq S$. 

\item[{\bf Case 3}:] 
Assume  $0\leq j<i\leq n+1$. Then  $j=i-1$ and  
$\S=\S_{n,j}$.

By  Claim 4,  $j=i-1$.
By definition of $A$ and $B$ and Claims 3 and 4, 
$S_{n,j}\subseteq S \subseteq({\downarrow}_k \top_{\!\!j})^2\cup\Delta_n$. 
But 
$\big(({\downarrow}_k \top_{\!\!j})^2\cup\Delta_n \big)
\setminus S_{n,j}=\{(\mbf_j,\mbt_j),(\mbt_j,\mbf_j)\}$. 
If
$(\mbf_j,\mbt_j)\in \S$, then
 $(\top_{\!\!j},\mbt_j)=(\mbf_j,\mbt_j)\oplus(\mbt_j,\mbt_j)\in S$, 
that is, $\top_{\!\!j}\in A$, 
which contradicts the assumption that $j<i$. 
A  contradiction is likewise obtained if we assume 
$(\mbt_j,\mbf_j)\in S$.  Thus $S=S_{n,j}$.

\item[{\bf Case 4}:] 
Assume  $0\leq i<j\leq n+1$.  
Then  $i=j-1$ and  
$\S=\conv{\S}_{n,j}$.

The proof is analogous to that  for  Case 3. \qedhere
\end{caseslist}
\end{proof}

\begin{coro}
\label{cor:congKn} 
 \begin{newlist}
\item[{\upshape (i)}] 
The congruence lattice\, $\mathrm{Con}(\K_n)$ of\, $\K_n$ 
is a chain with  $(n+2)$ elements.
\item[{\upshape (ii)}]   
The algebra\, $\K_n$ is subdirectly irreducible.  
\end{newlist} 
\end{coro}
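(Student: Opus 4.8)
The plan is to read $\Con(\K_n)$ directly off the classification of subalgebras of $\K_n^2$ supplied by Theorem~\ref{thm:Snmonly}. The congruences of any algebra are precisely the subalgebras of its square that happen to be equivalence relations on the underlying set (see \cite{BS81}), so every congruence of $\K_n$ must appear in the list $\K_n^2$, $\S_{n,m}$, $\conv{\S}_{n,m}$, $\S_{n,m}\cap\conv{\S}_{n,m}$ (for $0\le m\le n$) coming from that theorem. It therefore suffices to decide which of these relations are reflexive, symmetric and transitive.

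First I would dispose of the quasi-orders $\S_{n,m}$ and their converses. Each $S_{n,m}$ is a quasi-order, but it fails to be symmetric for every $m\in\{0,\dots,n\}$: we have $(\mbf_m,\top_{\!\!m})\in S_{n,m}$, since $\mbf_m\le_k\top_{\!\!m}\le_k\top_{\!\!m}$, whereas $(\top_{\!\!m},\mbf_m)\notin S_{n,m}$. Hence neither $\S_{n,m}$ nor $\conv{\S}_{n,m}$ is a congruence. The full relation $\K_n^2$ is of course the top congruence. Each remaining candidate $\S_{n,m}\cap\conv{\S}_{n,m}$ is reflexive (it contains $\Delta_n$), symmetric (it equals its own converse) and transitive (being an intersection of quasi-orders); moreover a short computation from the definitions identifies $S_{n,m}\cap\conv{S}_{n,m}$ with $\ker h_{n,m}=\Delta_n\cup\{\,(a,b)\mid a,b\le_k\top_{\!\!m+1}\,\}$, which is already known to be a congruence by Proposition~\ref{prop:Snn-subalg}(ii).

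This leaves exactly the $n+2$ congruences $\ker h_{n,0},\dots,\ker h_{n,n}$ together with the full relation. As $m$ decreases, the down-set of $\top_{\!\!m+1}$ in $\le_k$ strictly grows, so these congruences are linearly ordered by strict inclusion, $\Delta_n=\ker h_{n,n}\subsetneq\ker h_{n,n-1}\subsetneq\cdots\subsetneq\ker h_{n,0}\subsetneq\K_n^2$, giving a chain with $n+2$ elements; this proves~(i). Part~(ii) is then immediate: a nontrivial algebra whose congruence lattice is a chain has a smallest nontrivial congruence, namely the unique atom of the chain ($\ker h_{n,n-1}$ when $n\ge1$, and $\K_n^2$ itself when $n=0$), and is therefore subdirectly irreducible.

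The only real content lies in the second step: verifying the symmetry failure of each $\S_{n,m}$ and carrying out the identification $\S_{n,m}\cap\conv{\S}_{n,m}=\ker h_{n,m}$. Both are routine consequences of the definitions once Theorem~\ref{thm:Snmonly} is in hand, so I anticipate no genuine obstacle beyond careful bookkeeping of the indices.
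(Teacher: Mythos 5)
Your proof is correct and follows essentially the same route as the paper: both read the congruences off the classification of subalgebras of $\K_n^2$ in Theorem~\ref{thm:Snmonly}, discard the relations $\S_{n,m}$ and $\conv{\S}_{n,m}$ as non-symmetric, and retain $\K_n^2$ together with the $n+1$ congruences $\S_{n,m}\cap\conv{\S}_{n,m}$, which form a chain. Your explicit identification of $S_{n,m}\cap\conv{S}_{n,m}$ with $\ker h_{n,m}$ is a detail the paper defers to the discussion following the corollary, but it is the same argument.
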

\begin{proof}
For $m \in \{0,\ldots,n\}$, the relation $S_{n,m}$
is not a congruence on $\K_n$, as it is not symmetric and 
so not an equivalence relation.
On the other hand each relation
$S_{n,m} \cap \conv{S}_{n,m}$ is a congruence, and so  is $\K_n^2$. 
Hence (i) holds.   Statement (ii) is an immediate  consequence of (i).  
\end{proof}

  Let $\M$ be a finite algebra.  Then  $\ISP(\M)$, the 
\defn{quasivariety} generated by  $\M$, is the
class of  isomorphic copies of subalgebras of powers of~$\M$.  
It is well known (see \cite[Proposition~2.3]{CP1} for a direct proof)
that $\ISP(\four) = \HSP(\four)$ and that this is the
equational class $\DB$ of distributive bilattices (with bounds).  
Hence $\ISP(\K_0) = \HSP(\K_0)$.
On the other hand, $\HSP(\K_n)$ and $\ISP(\K_n)$ 
do not coincide for any $n\geq 1$.
We have a homomorphism $h_{n,0}$ from $\K_n$ onto $\K_0$,  
and hence $\K_0\in\HSP(\K_n)$. 
Suppose for a contradiction that $\K_0 \in \ISP(\K_n)$.  
Then, since $\K_0$ is not trivial, there exists a homomorphism 
$u\colon \K_0  \to \K_n$. 
Because  $\K_n$  has no proper subalgebras 
no such map~$u$ exists.

Let us fix $n \ge 1$ and consider  $\CV_n= \HSP(\K_n)$.
This variety is lattice-based and hence congruence distributive. 
Therefore we may appeal to J\'onsson's Lemma (see for example 
\cite[Corollary IV-6.10]{BS81}) to assert that every 
subdirectly irreducible algebra in $\CV_n$ is a homomorphic image
 of a subalgebra of $\K_n$ and hence that 
$\HSP(\K_n)= \ISP(\mathbb{HS}(\K_n))$.  
Then, because $\K_n$ has no proper subalgebras, 
$\HSP(\K_n)=\ISP(\mathbb{H}(\K_n))$.  
Corollary~\ref{cor:congKn}  showed that every 
non-trivial congruence in $\Con(\K_n)$ 
arises as the kernel of one of the homomorphisms 
$h_{n,m}$ and,
 moreover, that a non-trivial algebra in 
$\HSP(\K_n)$ is subdirectly irreducible 
if and only if it is isomorphic to $\K_m$ for some 
$m\in \{0,\ldots,n\}$. 
We may now record  the following proposition.  

\begin{prop} \label{Prop:VarKn}
The variety\, $\CV_n $ equals\, 
$\ISP(\{\,\K_m\mid 0\leq m\leq n\,\})$.
The subvarieties of\, $\CV_n$ form an $(n+2)$-element chain 
\[
 \HSP(\K_{-1})\subseteq \HSP(\K_{0})\subseteq \cdots \subseteq \HSP(\K_{n-1})\subseteq\HSP(\K_{n});
\]
here\, $\K_{-1}$ denotes the trivial 
{\upshape(}one-element{\upshape)} bilattice.
\end{prop}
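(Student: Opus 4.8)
The plan is to treat the two assertions separately, in both cases leaning on the subdirect-irreducibility analysis already established. For the first assertion I would observe that we already have $\CV_n = \HSP(\K_n) = \ISP(\mathbb{H}(\K_n))$, so it suffices to identify $\mathbb{H}(\K_n)$ up to isomorphism. Corollary~\ref{cor:congKn}(i) tells us that the congruences of $\K_n$ are exactly the kernels of the surjections $h_{n,m}$ (for $0 \le m \le n$) together with the full congruence, and $\K_n/\ker h_{n,m} \cong \K_m$ by Proposition~\ref{prop:Snn-subalg}(ii). Hence the members of $\mathbb{H}(\K_n)$ are, up to isomorphism, precisely $\K_0,\ldots,\K_n$ and the trivial algebra. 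Since the trivial algebra is the empty product and so lies in $\ISP$ of every nonempty class, it may be discarded from the generating set, yielding $\CV_n = \ISP(\{\,\K_m \mid 0 \le m \le n\,\})$.

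For the chain of subvarieties I would work throughout with subdirectly irreducible algebras. Because $\CV_n$ is congruence distributive, every algebra in it is a subdirect product of subdirectly irreducibles, so each subvariety $\mathcal{W}$ of $\CV_n$ is generated by, and hence determined by, the subdirectly irreducible algebras it contains. The discussion preceding this proposition identifies these, within $\CV_n$, as exactly $\K_0,\ldots,\K_n$. The key step is then to determine which sub-collections of $\{\K_0,\ldots,\K_n\}$ can occur as the subdirectly irreducible part of a subvariety.

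Here I would use that a subvariety is closed under $\mathbb{H}$ and $\mathbb{S}$, so that whenever $\K_m \in \mathcal{W}$ it must contain every subdirectly irreducible homomorphic image of every subalgebra of $\K_m$. Since $\K_m$ has no proper subalgebras, and (by the argument of the first paragraph applied to $\K_m$) its subdirectly irreducible homomorphic images are exactly $\K_0,\ldots,\K_m$, this forces the subdirectly irreducible part of $\mathcal{W}$ to be a down-set of the chain $\K_0 <\cdots< \K_n$. Conversely, $\HSP(\K_m)$ realises the down-set $\{\K_0,\ldots,\K_m\}$, while the trivial variety $\HSP(\K_{-1})$ realises the empty down-set. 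As distinct down-sets give distinct subvarieties and $\K_m \in \HSP(\K_m)\setminus\HSP(\K_{m-1})$, the inclusions displayed in the statement are strict; and since the down-sets of an $(n+1)$-element chain form an $(n+2)$-element chain, these are all the subvarieties and they are linearly ordered as claimed.

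I expect the main obstacle to be the clean invocation of the subvariety-lattice principle for congruence distributive varieties: namely that a subvariety is recovered from its subdirectly irreducible members, so that the problem reduces to the combinatorics of which collections of the $\K_m$ are closed under subdirectly irreducible homomorphic images of subalgebras. Once that reduction is granted, the two facts that do the real work, that $\K_m$ has no proper subalgebras and that its subdirectly irreducible quotients are precisely $\K_0,\ldots,\K_m$, are already in hand, and the down-set (hence chain) structure follows immediately.
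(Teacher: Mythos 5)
Your proposal is correct and follows essentially the same route as the paper: J\'onsson's Lemma for the congruence distributive variety $\CV_n$, the fact that $\K_n$ has no proper subalgebras (so $\HSP(\K_n)=\ISP(\mathbb{H}(\K_n))$), and the computation of $\Con(\K_n)$ as an $(n+2)$-element chain from Corollary~\ref{cor:congKn} to identify the subdirectly irreducibles as $\K_0,\ldots,\K_n$. You merely spell out more explicitly than the paper the standard step that subvarieties are determined by which down-set of subdirectly irreducibles they contain; this is a welcome elaboration, not a different argument.
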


The following lemma is exploited in proving 
that the dualities we present are optimal, in that the dual category 
is as simple as possible. 

\begin{lem}\label{lem:condH} 
Let $m$ and $n$ be such that 
$0\leq m\leq n$.  
Then every homomorphism from\, $\S_{n,m} $ into\, $\K_m$ 
is the restriction of a projection. 
\end{lem}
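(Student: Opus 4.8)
The plan is to reduce the general statement to the case $m=n$ by factoring $f$ through the surjection $\phi:=(h_{n,m}\times h_{n,m})|_{\S_{n,m}}\colon\S_{n,m}\to\S_{m,m}$ (a surjective homomorphism onto $\S_{m,m}$, by the proof of Proposition~\ref{prop:Snn-subalg}), and then to settle the case $m=n$ by a direct finite computation. First I would record two easy facts. Since $\Delta_n\cong\K_n$ and $\top,\bot$ generate $\K_n$, the only homomorphism $\K_n\to\K_m$ is $h_{n,m}$; hence $f(a,a)=h_{n,m}(a)$ for all $a$, and in particular $f(c)=\bot$, where $c:=(\top_{\!\!m+1},\top_{\!\!m+1})\in\Delta_n$ and $\bot=\top_{\!\!m+1}$ is the $\le_k$-least element of $\K_m$.

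The key step would be the factoring. Because $\top_{\!\!m+1}$ lies on the central chain of $(K_n,\le_k)$ it is $\le_k$-comparable with every element, so $x\oplus\top_{\!\!m+1}=h_{n,m}(x)$ for each $x\in K_n$ (it is $\top_{\!\!m+1}$ when $x\le_k\top_{\!\!m+1}$, and is $x$ otherwise). Applying this in both coordinates gives $(a,b)\oplus c=(h_{n,m}(a),h_{n,m}(b))=\phi(a,b)$ for every $(a,b)\in\S_{n,m}$. Since $f$ is a homomorphism and $f(c)=\bot$, it follows that $f(\phi(a,b))=f(a,b)\oplus f(c)=f(a,b)\oplus\bot=f(a,b)$. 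Thus $f(a,b)$ depends only on $\phi(a,b)$, i.e. $\ker\phi\subseteq\ker f$, and the homomorphism theorem yields a homomorphism $g\colon\S_{m,m}\to\K_m$ with $f=g\circ\phi$.

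It then remains to treat the base case: that every homomorphism $g\colon\S_{m,m}\to\K_m$ is a restriction of one of the projections $\pi_1,\pi_2\colon\K_m^2\to\K_m$. The algebra $\S_{m,m}$ has exactly five off-diagonal elements, namely $(\bot,\mbf_m)$, $(\bot,\mbt_m)$, $(\bot,\top_m)$, $(\mbf_m,\top_m)$ and $(\mbt_m,\top_m)$ (writing $\bot=\top_{\!\!m+1}$), and $g$ fixes the diagonal as above. Sandwiching $(\bot,\mbf_m)$ between the diagonal elements $(\bot,\bot)$ and $(\mbf_m,\mbf_m)$ in the knowledge order forces $g(\bot,\mbf_m)\in\{\bot,\mbf_m\}$. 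This single choice then propagates: $g(\bot,\mbt_m)=\neg\,g(\bot,\mbf_m)$, while the identities $(\bot,\mbf_m)\oplus(\bot,\mbt_m)=(\bot,\top_m)$, $(\mbf_m,\mbf_m)\oplus(\bot,\mbt_m)=(\mbf_m,\top_m)$ and $(\mbt_m,\mbt_m)\oplus(\bot,\mbf_m)=(\mbt_m,\top_m)$ pin down the remaining three values. The two admissible choices reproduce precisely $\pi_1$ and $\pi_2$. Combining the two parts, $f=\pi_i\circ\phi$ for some $i\in\{1,2\}$, so that $f(a,b)=h_{n,m}(\pi_i(a,b))$; that is, $f$ is the restriction to $\S_{n,m}$ of a coordinate projection of $\K_n^2$ followed by the linking homomorphism $h_{n,m}$, which is the sense in which $f$ is a restriction of a projection.

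I expect the factoring step to be the only real obstacle: everything hinges on the observation that $\oplus$-ing with the diagonal element $c$ realises $h_{n,m}$ coordinatewise, and this is exactly where the central position of $\top_{\!\!m+1}$ in the knowledge order (and hence the non-interlacing geometry of $\K_n$) is used. Once the problem is pushed down to $\S_{m,m}$ the remainder is a forced finite verification; the only care needed there is to check that the propagation from the single value $g(\bot,\mbf_m)$ is consistent and really does exhaust the homomorphisms.
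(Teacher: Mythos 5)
Your proof is correct, but it takes a genuinely different route from the paper's. The paper disposes of the lemma by appealing to general universal algebra: $\S_{n,m}$ is lattice-based, hence lives in a congruence-distributive variety and is a subdirect product of two copies of $\K_n$, so a Foster--Pixley/J\'onsson-style argument (via Birkhoff's Subdirect Product Theorem) shows that any homomorphism onto the subdirectly irreducible $\K_m$ factors through one of the coordinate projections, whence it equals $h_{n,m}\circ\pi_i$ because $h_{n,m}$ is the unique homomorphism $\K_n\to\K_m$. You instead make everything explicit: the identity $x\oplus\top_{\!\!m+1}=h_{n,m}(x)$ (valid precisely because $\top_{\!\!m+1}$ is $\le_k$-comparable with every element of $K_n$) shows that forming $\oplus$ with the diagonal element $(\top_{\!\!m+1},\top_{\!\!m+1})$ realises $h_{n,m}\times h_{n,m}$ inside $\S_{n,m}$, which forces $\ker\phi\subseteq\ker f$ and pushes the problem down to $\S_{m,m}$; that base case you then settle by a forced finite check on the five off-diagonal elements, and the check is sound (the interval $[\bot,\mbf_m]$ in $(K_m,\le_k)$ really is two-element, and the $\neg$- and $\oplus$-propagation exhausts all possibilities). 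Both arguments land on the same description $f=h_{n,m}\circ\pi_i\restrict_{S_{n,m}}$, and your reading of ``restriction of a projection'' as projection followed by the linking homomorphism is exactly how the lemma is deployed in the proof of Theorem~\ref{thm:optimalstrong}. What the paper's route buys is brevity and generality (it applies verbatim to any subalgebra of $\K_n^2$ and any subdirectly irreducible target); what yours buys is a self-contained, fully checkable argument that avoids congruence distributivity and isolates the concrete structural fact---the central position of $\top_{\!\!m+1}$ in the knowledge order---that makes the reduction work.
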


\begin{proof}  The proof  is a  special instance of a 
classic argument from universal algebra, as given, for example,  
in \cite[Theorem~2.5]{Pix}.  
It uses the fact that $\S_{n,m}$ is lattice-based (and so 
has a distributive  congruence lattice), together with 
 Birkhoff's Subdirect Product Theorem, to show that any 
homomorphic image of $\S_{m,m}$ is a subdirect product of
 homomorphic images of $\K_n$. 
Let $g \colon \S_{n,m} \to \K_m$ be a homomorphism.  
Since $\K_m$ has no proper subalgebras, $g$ is surjective.
Taking account of the fact that $\K_m$ is subdirectly irreducible,  
and has no non-identity endomorphisms, the lemma  follows easily.
\end{proof}

\section{The natural duality framework: multisorted  dualities}\label{sec:multisorted}

Assume that we have a quasivariety  $\CA$ of the form 
$\ISP(\CM)$, where $\CM$ is a finite set of finite algebras, 
later assumed to be lattice-based.  
When 
$\CM$ contains a single algebra~$\M$,
 we write $\CA$ as $\ISP(\M)$. 
We regard $\CA$ as a category, in which the morphisms are all 
homomorphisms. 
 We seek a category $\CX$ of   topological  structures so 
that there are functors $\fnt D \colon \CA \to \CX$ 
and $\fnt E \colon \CX \to \CA$ setting up a dual equivalence.  
This will be done in a very specific way, so that  
$\fnt D $ and $\fnt E$ are given by appropriately defined 
hom-functors.

 An algebra of the same type as those in $\CM$ belongs to $\CA$ 
if and only if the sets of homomorphisms $\CA(\A,\M)$, 
for $\M \in \CM$,  jointly separate the 
elements of~$\A$; 
for an explicit statement and proof of this elementary fact from
 universal algebra, see for example \cite[Theorem~1.1.4]{CD98}.  
This indicates that the hom-sets $\CA(\A,\M)$ may play a role 
in a representation theory for~$\CA$.  
Indeed, Stone duality for $\CB$ (Boolean algebras) and 
Priestley duality for~$\CCD$ (bounded distributive lattices) 
can be seen as capitalising  on this idea: 
each of these classes can be represented as the quasivariety
generated by an algebra~$\M$ with universe $\{0,1\}$.  
One then builds a dual category $\CX$ (of Boolean spaces or  of Priestley spaces, as the case may be).   
There is a natural hom-functor $\D \colon \CA \to \CX$ which, 
on objects, assigns  to $\A$ in $\CA$ the hom-set $\CA(\A,\M)$.
The objects of $\CX$ are obtained by defining an 
\defn{alter ego}~$\MT$ for~$\M$:  
a discretely topologised structure on the same underlying set~$M$. 
For $\CB$, the alter ego $\MT$ is $\{0,1\}$ 
with the discrete topology; for $\CCD$  it is $\{0,1\} $, 
with the partial order~$\leq$ for which $0<1$, 
again with the discrete topology.  
The hom-set  $\CA(\A,\M)$ sits inside  $M^A$, 
equipped with the product topology and, 
in the case of $\CCD$, pointwise lifting of $\leq$.   
The original algebra~$\A$ is recaptured  as the set of continuous
structure-preserving maps from its dual space $\D(\A)$ into
$\MT$,
on which the algebraic operations are defined pointwise
from $\M$.   
Readers familiar with the Stone and Priestley dualities formulated 
in a way different from that we have sketched here can be reassured 
 that passage to the hom-functor approach involves little more
than a simple translation of concepts and notation; for example
 replacement of prime filters by $\{0,1\}$-valued homomorphisms.  
Details can be found in \cite[Chapter~11]{ILO2} and 
\cite[Chapter~1]{CD98}; see also 
Example~\ref{ex:Priestley-duality} below.

Our purpose in outlining the hom-functor perspective on the 
Stone and Priestley dualities has been to provide preliminary
motivation for the multi\-sorted dualities we shall 
employ in this paper.  
 We contend that the ideas involved in setting up the multisorted
 framework  are no more complicated than those in the 
single-sorted case in which $\CM$ contains one algebra only. 
 Accordingly, we shall pass directly to the general case.   
An account which parallels that we give below, but confined to 
 the single-sorted setting, and with distributive bilattices in view,
 can be found in \cite[Section~2]{CP1} 
(see also Example~\ref{ex:dualfour} below).  

The theory of multisorted natural dualities is presented, 
albeit briefly, in \cite[Chapter~7]{CD98}, and in more detail in the
 original source~\cite[Section~2]{DP87}.  
The single-sorted case is much more extensively 
documented than the multisorted one for two reasons.   
Firstly, the former got a head start  and suffices for many
 important applications.  
Secondly, concepts and results in the multisorted setting mimic  
their single-sorted counterparts, so that details have been 
worked out only as potential applications have emerged.    
We shall set up the multisorted duality framework in  detail, 
to make  the constructions  easy to follow.  
But we stress that  it is not necessary to delve into the proofs of 
the general facts we state in order to understand 
the applications we shall make of the results.

Assume we have a quasivariety $\CA = \ISP(\CM)$, where 
$\CM = \{\M_0, \ldots, \M_n\}$  is a set of  non-isomorphic 
finite algebras of common type and having lattice reducts.   
We shall shortly assume in addition that each $\M_i$ 
has  no proper subalgebras and is subdirectly irreducible. 
These assumptions will allow us to  work in a more restricted 
setting than that in \cite{CD98}.
We now need to explain what constitutes an admissible 
\defn{alter ego}  $\CMT$, how the dual category  $\CX$ of
 multisorted structures generated by $\CMT$ is constructed
and how the associated dual adjunction between 
$\CA$ and $\CX$ is set up.

We shall consider an alter ego  for $\CM$ which takes  the form 
\[
\CMT = (M_0 \du \cdots  \du M_n; R, G, \Tp).
\]
Here $R$ is  a  set of relations each of which is a subalgebra 
of some $\M_i \times \M_j$, where $i,j \in \{0,1,\dots,n\}$. 
 If we are to obtain a dual equivalence between $\CA$ and $\CX$
 a purely relational structure may not suffice and  we also allow
 for a set~$G$ of unary algebraic operations. 
By this we mean that each $h \in G$ is a homomorphism 
from  some $\M_i$ into some $\M_j$.
(The reason for assuming that the relations and operations are
 \defn{algebraic} will emerge shortly.)
The alter ego $\CMT$ is given the disjoint union topology derived
 from the discrete topology on the sorts  $M_i$. 

We form multisorted topological 
$\CMT$-structures~${\X = \X_0 \du \cdots \du \X_n} $ 
where each of the sorts $\X_i$ is a Boolean  space,~$\X$ 
is equipped with the disjoint union topology and, 
regarded as a structure, $\X$ carries relations and  operations
 matching those of $\CMT$.  
Thus $\X$ is equipped with a set $R^{\X}$ of relations $r^{\X}$; 
if $r \subseteq M_i \times M_j$, then 
${r^{\X} \subseteq X_i \times  X_j}$;  and similarly $\X$ carries a set 
$G^{\X}$ of unary  
operations.
Clearly $\CMT$ itself is a structure of this type.  
 Given $\CMT$-structures $\X $ and $\Y$, a morphism 
$\phi \colon \X \to \Y$ is  defined to be a continuous map
 preserving the sorts, so that $\phi(X_i) \subseteq Y_i$, 
and  $\phi$ preserves the structure.
The terms isomorphism, embedding, etc., 
are defined in the expected way.

We define our dual category $\CX$ to have as objects those
 $\CMT$-structures $\X$ which belong to the class of  
topological structures which we shall denote by $\IScP(\CMT)$.  
Specifically, $\CX$ consists of isomorphic copies of closed 
substructures of powers of $\CMT$. 
Here powers are formed  `by sorts': given a non-empty set~$S$, 
the underlying set of $\CMT^S$ is the union of disjoint copies
 of~$M^S$, for $\M \in \CM$, equipped with the disjoint union
topology obtained when each $M^S$ is given  the product topology.
The  structure  defined by $R$ and $ G$ is lifted pointwise to 
 substructures of such powers.
The superscript $^+$ indicates that the  empty structure is 
included in $\CX$.

We now define  hom-functors 
$\D \colon\CA \to \CX$ and $\E\colon \CX \to\CA$ 
using 
$\CM$
and its alter ego $\CMT$: 
\begin{alignat*}{2} 
 & \D  \colon \CA \to \CX,  \qquad && \hbox{$
          \begin{cases}
                \D (\A)= \CA(\A,\M_0) \du \cdots \du \CA(\A,\M_n)\\
                \D (f) = - \circ f,
          \end{cases}$}  \\  
 & \E \colon \CX \to \CA,  \qquad && \hbox{$
           \begin{cases}
                 \E (\X)= \CX(\X,\CMT)\\
                 \E (\phi)  = - \circ \phi .
           \end{cases}$}  
\end{alignat*}
Here the disjoint union 
$\CA(\A,\M_0) \du \cdots \du \CA(\A,\M_n)$ is  a 
(necessarily closed) substructure of $M_0^A\du \cdots \du M_n^A$
 and so a member of $\IScP(\CMT)$. 
 We recall from above that  $\CX(\X,\CMT)$, as a set, 
is the collection of continuous structure-preserving maps
 $\phi\colon \X \to\CMT$ which are such that 
$\phi(X_i)\subseteq M_i $ for $0 \leq i \leq n$.
This set acquires the structure of a member of 
$\CA$ by virtue of viewing it as a subalgebra of the power 
$\M_0^{X_0} \times  \dots \times \M_n^{X_n}$. 
The well-definedness of the functors $\D$ and $\E$ is of central 
importance to our enterprise. 
It hinges on the assumption we have made that the relations and 
operations in the alter ego are algebraic, and that each $M_i$ is 
finite and carries the discrete topology; 
cf. \cite[Preduality Theorem, 2.5.2]{CD98}.    
We can say more (cf.~\cite[Dual Adjunction Theorem, 2.5.3]{CD98}): 
 $\D$ and $\E$ set up  a  dual adjunction, $(\D,\E, e,\varepsilon)$ 
in which the unit and counit maps are evaluation maps,  
and these evaluations  are embeddings.

We say $\CMT$
 \defn{yields a multisorted duality} if, for each 
$\A \in \CA$,  the evaluation map $e_{\A}\colon
\A \to \ED(\A)$
is an isomorphism.  
 The duality is \defn{full} if, for each $\X \in\CX$, the evaluation map 
$\varepsilon_{\X} \colon \X \to \D\E(\X)$ is an isomorphism.  
Thus a duality provides a concrete 
representation $\E\D(\A)$ of  $\A \in \CA$.
If in addition the duality is full, we also know that every 
$\X \in \CX$ arises, up to isomorphism,  as a topological structure 
$\D(\A)$, for some $\A \in \CA$.

In practice, fullness of a duality  is normally obtained at 
second hand by showing that the duality is  \defn{strong}. 
We do not need to use this notion directly; for the formal 
definition see \cite[Chapter~3]{CD98}.  
  However we do remark  that the functors $\D$ and $\E$ 
setting up a strong duality have  the property that 
each maps 
an embedding to a surjection and a surjection to an embedding; 
this is a very desirable feature of a duality as regards applications. 

We record  an important fact, true for  any multisorted duality, 
and adding weight to the duality's claim to be called `natural'.  
In $\CA = \ISP(\CM)$, the free algebra $\Free_{\CA}(S)$  on a set $S$ 
of generators is isomorphic to $\E(\CMT^S)$;  
in particular, $\CMT$ is the dual space of $\Free_{\CA}(1)$
 \cite[Lemma~2.2.1 and Section~7.1]{CD98}.

We  now state, without further ado, the theorem on which we shall
rely, following it with an informal commentary.
It is a  very restricted form of \cite[Theorem~7.1.2]{CD98}  
which draws also,  {\it mutatis mutandis}, 
on \cite[Corollary~3.3.9]{CD98}.

\begin{thm}  
{\rm(Multisorted NU Strong Duality Theorem, special case)}\label{thm:NUnSorts}
Let\, $\CA = \ISP(\CM)$, where\, $\CM = \{ \M_0, \dots,\M_n\}$ 
is a set of non-isomorphic subdirectly irreducible   
algebras of common type having lattice reducts
and assume  that no\, $\M_i$ has a proper subalgebra. 
Let\, $\CMT = \bigl(\,
M_0 \du \cdots\du  M_n;  
R, G,\Tp \bigr) $
where $ R= \bigcup 
\{\, \mathbb{S} (\M_i\times\M_j)\mid i,j\in\{0,1,\dots,n\}\,\}$,
$G= \bigcup 
\{\, \CA (\M_i\times\M_j)\mid i,j\in\{0,1,\dots,n\}\,\}$,
 and $\Tp$ is the disjoint union topology obtained from  
the discrete topology on the sorts $M_i$.  
Then\, $\CMT$ yields a multisorted duality on\, $\CA$ which is
 strong {\upshape(}and hence full\,{\upshape)}.
\end{thm}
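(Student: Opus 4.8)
The strategy is to check that the pair $(\CM,\CMT)$ meets the hypotheses of the general multisorted near-unanimity (NU) duality machinery, so that the statement follows from \cite[Theorem~7.1.2]{CD98}, with the strongness drawing on \cite[Corollary~3.3.9]{CD98}, adapted to the multisorted setting. The one hypothesis of that machinery not already among our assumptions is the existence of a common NU term, so I would begin there. Because the $\M_i$ share a common type with lattice reducts, the median term $m(x,y,z)=(x\wedge y)\vee(y\wedge z)\vee(z\wedge x)$ is a single ternary term of that type, and it satisfies $m(x,x,y)=m(x,y,x)=m(y,x,x)=x$ in every lattice; hence $m$ is a majority term---an NU term of arity~$3$---interpreted identically in all of the $\M_i$.

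With a ternary NU term in hand, \cite[Theorem~7.1.2]{CD98} delivers the duality: for the dual adjunction already set up to be a duality, it suffices to take as the relational part of the alter ego all subalgebras of the \emph{binary} products $\M_i\times\M_j$, and this is precisely the set $R$. (It is the arity~$3$ of $m$ that makes binary relations enough; a $(k{+}1)$-ary NU term would instead force subalgebras of $k$-fold products into $R$.) Thus $\CMT$, with this $R$ and with $G$ the collection of all total homomorphisms among the $\M_i$, yields a multisorted duality on $\CA$, that is, each evaluation $e_{\A}\colon\A\to\ED(\A)$ is an isomorphism.

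The substantive step is to promote this to a strong duality, and this is where I expect the difficulty to concentrate. Invoking \cite[Corollary~3.3.9]{CD98} in multisorted form, strongness is equivalent to the injectivity of $\CMT$ in $\CX=\IScP(\CMT)$: every morphism from a closed substructure of a power $\CMT^S$ into $\CMT$ must extend to all of $\CMT^S$. In the NU setting such extensions can be produced once the alter ego carries enough algebraic operations, which in general means adjoining certain partial operations; the role of the remaining hypotheses is to render these superfluous. The partial operations one would otherwise need are homomorphisms from subalgebras of products of the sorts into a single sort, and by a Pixley-style argument (cf.\ \cite{Pix})---available because the lattice reducts make the relevant congruence lattices distributive and because each target $\M_j$ is subdirectly irreducible with no proper subalgebra---every such homomorphism is a restriction of a coordinate projection. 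Consequently the total homomorphisms in $G$, together with the relations in $R$, already witness injectivity, so no partial operations are required. Carrying out this reduction carefully is the crux; once injectivity is established, each counit $\varepsilon_{\X}\colon\X\to\D\E(\X)$ is an isomorphism, and the duality is strong and hence full.
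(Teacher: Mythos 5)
Your proposal follows essentially the same route as the paper, which states this theorem without a detailed proof and defers to \cite[Theorem~7.1.2]{CD98} and \cite[Corollary~3.3.9]{CD98}: the lattice median supplies the ternary NU term, so binary algebraic relations suffice for the duality, and the stringent hypotheses (subdirect irreducibility, no proper subalgebras) ensure that unary total operations, rather than partial ones, suffice for strongness. Your added detail on why the would-be partial operations collapse to restrictions of projections is consistent with the Pixley-style argument the paper itself invokes elsewhere (Lemma~\ref{lem:condH}).
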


It is clear that, from the perspective of universal algebra, 
the restrictions we have imposed on $\CM$ are extremely stringent.  
However the results of the previous section show that all 
the assumptions are met when $\CM = \{\,\K_0, \dots, \K_n\,\}$ 
(for any $n \geq 0$).  
We could also  take $\CM = \{ \K_n\}$, to obtain a single-sorted
 duality for the quasivariety $\ISP(\K_n)$; 
see Section~\ref{sec:dualitiesKn}.

The assumption that the 
algebras 
in $\CM$ be lattice-based comes
 into play in the following way.  
Since each  $\M\in\CM$ has a lattice reduct, it has a 
$3$-ary near unanimity term, {\it viz.}~the lattice median.   
This ensures, as a consequence of the multisorted version of the NU
Duality Theorem~\cite[Theorem~3.3.8 and Corollary~3.3.9]{CD98}  
that the set of all binary  relations which are subuniverses of 
algebras $\M_i \times \M_j$ (where $i,j$ vary over $\{0,\dots,n\}$)
 yields a duality on $\CA= \ISP(\CM)$. 
 We stress that a critical part of the conclusion here is dualisability:  
there exists an alter ego yielding a duality.  
Moreover we obtain  as a bonus  a very explicit 
form of one such alter ego.

Finally we should comment on the claim in the theorem 
that the duality is strong.  
A duality can fail to be full if the alter ego is insufficiently rich, 
so that the dual category $\CX$ is too big.   
 In the lattice-based case, adding additional structure to the 
alter ego in the form of algebraic operations (sometimes partial),
 one can arrive at a duality which is strong, and hence full.  
However, under the very restricted conditions imposed on  $\CM$ in 
Theorem~\ref{thm:NUnSorts},  it turns out that unary total 
operations suffice.

We would like the dualities we present to contain in their alter egos
 as few relations and operations as possible. 
 Suppose  $\CMT$, as in Theorem~\ref{thm:NUnSorts},  is an alter
 ego  yielding a duality on  a class $\CA=\ISP(\CM)$.
Then any $\A \in \CA$ is such that $\A \cong \E \D(\A)$;  
here the structure of $\D(\A) $ is completely  determined by that of
 $\CMT$,  and the elements of $\E\D(\A)$ are the multisorted 
continuous structure-preserving maps from $\D(\A)$ into $\CMT$. 
 From this it is clear that, for example, we gain nothing by 
including in $R$ both a binary relation and its converse.  
 It is also never necessary to
 include `trivial relations': those which are preserved automatically 
by $\CX$-morphisms.    
Examples are $\M_i^2$ and its diagonal subalgebra, for any~$i$.
Here we have very simple 
instances of \defn{entailment},
sufficient for our immediate needs;
see \cite[Section 2.4]{CD98} for further information.

We conclude this summary of facts from natural duality theory 
by drawing attention to two (single-sorted) dualities   which  
fit into the special framework we have described and can be derived,
albeit circuitously,  from Theorem~\ref{thm:NUnSorts} and 
the remarks above. 
First we revisit Priestley duality, which we mentioned briefly at 
the start of this section.
This provides  a valuable tool for working with $\CCD$-based
 algebras,
on which we shall draw heavily in Section~\ref{sec:RevEng}:
recall that the knowledge lattice reduct of each algebra in 
$\CV_n$  belongs to $\CCD$.

\begin{ex}  \label{ex:Priestley-duality}  
(Priestley duality)
We recall that a \emph{Priestley space} is a topological structure 
$(X; \leq,\Tp)$ in which $(X;\Tp)$ is a compact space and
$\leq$ is a partial order with the property that, given 
$x \nleqslant y$ in~$X$, there exists a $\Tp$-clopen up-set~$U$
such that $x \in U$ and $y \notin U$.  
The morphisms in the category~$\CP$ of Priestley spaces are 
the continuous order-preserving maps.

There is a dual equivalence between 
$\CCD$ and $\CP$ constructed as follows.  
Let $\two= \bigl( \{ 0,1\}; \land,\lor,0,1\bigr)$ 
be the two-element lattice in $\CCD$ and let its alter ego be
$\twoT = \bigl(  \{ 0,1\};  \leq, \Tp\bigr)$.
Then $\CCD = \ISP(\two)$ and $\CP = \IScP (\twoT)$
and the  hom-functors  $\fnt{H} = \CCD(-, \two)$ and 
$\fnt{K} = \CP(-, \twoT)$ set up a strong duality
between~$\CCD$ and~$\CP$.   
This can be seen as a consequence of the single-sorted case of
 Theorem~\ref{thm:NUnSorts} and our comments on entailment.  
Since we  later use Priestley duality in conjunction with a
 natural duality based on hom-functors $\D$ and~$\E$, 
we adopt non-generic symbols  
$\fnt{H} $ and $\fnt{K}$ for the hom-functors 
between~$\CCD$ and~$\CP$.   
\end{ex}

Our second example serves to indicate that, for  the base case of
 our hierarchy of default bilattices,  a natural duality has 
 already been worked out. 
We present this as 
for 
$\ISP(\K_0)$, recalling that this variety  is 
term-equivalent to the variety $\DB$ studied in \cite{CP1}.

\begin{ex} \label{ex:dualfour}
 (Natural duality for distributive bilattices 
\cite[Theorem 4.2]{CP1})
There is a dual equivalence between the category $\ISP(\K_0)$ 
of distributive bilattices and the category $\CP$ of 
Priestley spaces constructed as follows.  
The alter ego 
$
\twiddle{\spc{K}_0} = \bigl(  \{ \top,\bot, \mbt_0, \mbf_0\};  \leq_k, \Tp\bigr)
$ 
for  $\K_0$ yields a strong (and hence full) duality on 
$\CV_0 = \ISP(\K_0)$ and moreover
the dual category $\IScP (\twiddle{\spc{K}_0})$ 
coincides
with $\CP$.
We do not justify here the identification of the dual category, 
which can be found in \cite{CP1}.  
We do, however draw attention to the occurrence of 
the knowledge order $\leq_k$ in the alter ego.  
Because $\K_0$ is a distributive bilattice, $\leq_k$ is an 
algebraic relation.  
\end{ex}  

\section{Dualities for varieties of prioritised default bilattices}
\label{sec:varKn}

In this section we present multisorted natural dualities 
for the varieties $\CV_n=\HSP(\K_n)$, for $n \geq 1$.  
 (The strategy  we use applies equally to the case $n=0$, 
but we have already commented on this simple case,  in which 
 the duality is  single-sorted because $\CV_0 = \ISP(\K_0)$.) 

We shall apply the Multisorted NU Strong Duality Theorem in the 
restricted form stated as Theorem~\ref{thm:NUnSorts} and  then  
use entailment arguments to simplify the alter ego.
 We refer the uninitiated but interested reader   to 
\cite[Section~9.4]{CD98} for  an account of entailment as it
 applies to strong dualities and to \cite[Section~9.2]{CD98}
for definitions of the entailment constructions we invoke.

\begin{thm}\label{Thm:DualityVarietyKn}
 Consider\, 
$\CV_n 
= \ISP(\CM_n)$,  where\, 
$\CM_n =\{\K_{0},\ldots, \K_{n} \}$.
Then the alter ego
\[\twiddle{ \CM_n} = ( K_{0} \du \cdots \du K_{n} ; R_n,G_n,\Tp ), 
\quad\mbox{where } 
R_n=\{\, S_{m,m}\mid 0\leq m\leq n\}  
\mbox{ and } \,   
G_n=\{\,h_{i,i-1}\mid 1\leq i\leq n\,\}
\]
yields a strong {\upshape(}and hence full\,{\upshape)} 
duality on\, $\CV_n$.  
\end{thm}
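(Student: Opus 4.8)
The plan is to derive Theorem~\ref{Thm:DualityVarietyKn} in two stages. First I would invoke the Multisorted NU Strong Duality Theorem (Theorem~\ref{thm:NUnSorts}) verbatim to obtain a strong duality from the \emph{full} alter ego, and then I would use entailment arguments to cut the relations and operations down to the economical sets $R_n$ and $G_n$ claimed in the statement. The hypotheses of Theorem~\ref{thm:NUnSorts} are exactly what the previous section established: by Proposition~\ref{Prop:VarKn} we have $\CV_n = \ISP(\CM_n)$ with $\CM_n = \{\K_0,\dots,\K_n\}$ a set of non-isomorphic, subdirectly irreducible (Corollary~\ref{cor:congKn}(ii)), lattice-based algebras, each having no proper subalgebra (since every element of $K_m$ is a term). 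So the full alter ego $\bigl(K_0 \du \cdots \du K_n; R, G, \Tp\bigr)$, with $R = \bigcup \mathbb{S}(\K_i \times \K_j)$ and $G = \bigcup \CA(\K_i, \K_j)$ over all $i,j \in \{0,\dots,n\}$, yields a strong duality. The work is therefore entirely in the reduction.

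For the relational part, Theorem~\ref{thm:Snmonly} is the key input: every subalgebra of $\K_n^2$ is either $\K_n^2$ or one of $\S_{n,m}$, $\conv{\S}_{n,m}$, $\S_{n,m} \cap \conv{\S}_{n,m}$ for some $m$, and Proposition~\ref{prop:Snn-subalg}(iv) transports this to describe $\mathbb{S}(\K_i \times \K_j)$ as images under $h_{n,i} \times h_{n,j}$. I would argue that each such relation is entailed by the partial orders $S_{m,m}$ together with the operations in $G_n$. The trivial relations $\K_i \times \K_j$ and the diagonals are entailed automatically, as noted in the entailment discussion in Section~\ref{sec:multisorted}. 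A converse $\conv{S}_{m,m}$ is entailed by $S_{m,m}$, so including both is redundant. The cross-sort relations, being images of $\S_{n,m}$ under the product of the projection-like maps $h_{n,i}$, should be recoverable from the single-sort orders $S_{m,m}$ by pulling back along the homomorphisms in $G_n$; here I would use that each $h_{i,i-1}$ is in $G_n$ and that composites $h_{n,m} = h_{m+1,m} \circ \cdots \circ h_{n,n-1}$ build every needed linking map. Intersections such as $S_{n,m} \cap \conv{S}_{n,m}$ are entailed by forming meets and adding converses, both standard entailment constructions.

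For the operational part, I would show that the full set $G = \bigcup \CA(\K_i, \K_j)$ is entailed by $G_n = \{h_{i,i-1} \mid 1 \le i \le n\}$. The point is that every homomorphism between the $\K_i$ is, by Lemma~\ref{lem:condH} (applied to the subalgebras $\S_{n,m}$, noting that $\K_i$ has no non-identity endomorphisms and no proper subalgebras), forced to be one of the canonical maps $h_{i,j}$ or a projection-restriction; there are no exotic homomorphisms. Since every $h_{i,j}$ with $i > j$ factors as a composite of consecutive maps $h_{i,i-1}, h_{i-1,i-2}, \dots, h_{j+1,j}$, and composition of operations in the alter ego is an entailment construction, the whole of $G$ is entailed by the $n$ consecutive maps in $G_n$. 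Maps with $i \le j$ either do not exist (for $i < j$, by the no-proper-subalgebra obstruction, as in the $\K_0 \notin \ISP(\K_n)$ argument) or are identities.

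The main obstacle I expect is the cross-sort relational entailment: verifying cleanly that the relations in $\mathbb{S}(\K_i \times \K_j)$ for $i \neq j$ add nothing once the single-sort orders $S_{m,m}$ and the linking operations $G_n$ are present. This requires combining Proposition~\ref{prop:Snn-subalg}(iv), which realises these relations as $h_{n,i} \times h_{n,j}$ images, with the entailment rule that a relation defined by pulling back an already-entailed relation along alter-ego operations is itself entailed. Getting the direction of the pullback right, and checking that $S_{m,m}$ being a \emph{partial order} (rather than a mere quasi-order, which holds precisely when the sort index equals $m$) is what makes the single-sort relations the correct minimal generating set, is the delicate bookkeeping step; the rest is routine application of the entailment toolkit from \cite[Sections~9.2,~9.4]{CD98}.
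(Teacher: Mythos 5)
Your proposal is correct and follows essentially the same route as the paper: invoke Theorem~\ref{thm:NUnSorts} for the full alter ego (the hypotheses having been verified in Section~\ref{sec:ClassKn}), then entail the full sets $R$ and $G$ from $R_n$ and $G_n$ using Theorem~\ref{thm:Snmonly}, Proposition~\ref{prop:Snn-subalg}(i) and (iv), and the factorisation of each $h_{i,j}$ into consecutive maps. The only quibble is a citation: the fact that $\CV_n(\K_i,\K_j)=\{h_{i,j}\}$ for $j\leq i$ and is empty otherwise comes from Corollary~\ref{cor:congKn} (together with the absence of proper subalgebras), not from Lemma~\ref{lem:condH}, which concerns homomorphisms out of $\S_{n,m}$ and is used later for optimality.
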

\begin{proof} 
The representation of $\CV_n$ as $\ISP(\CM_n)$ was established in  
Section~\ref{sec:ClassKn}, where 
we also proved that each  
$\K_m$ is subdirectly irreducible and has no proper subalgebras.
Hence the structure
$\twiddle{\CM_n}' = ( K_{0}\du \cdots \du K_{n} ; 
R, G, \Tp)$, 
where $R=\bigcup\, 
\{\,\mathbb{S}(\K_{i}\times \K_{j})\mid i,j \in \{0,\ldots,n\}\,\}$
 and 
$G=\bigcup\,
\{\,\CV_n(\K_{i},\K_{j})\mid i,j \in \{0,\ldots,n\} \,\}$,
yields a strong duality on $\CV_n$. 
The theorem is then a consequence of two claims.

\begin{claimslist}
\item[{\bf Claim 1}:]
$G_n$ hom-entails $G$ 
(in the sense of the definition in \cite[Section~3.2]{CD98}).

By Corollary~\ref{cor:congKn}, $\CV_n(\K_{i},\K_{j})=\{h_{i,j}\}$ 
if $j\leq i$ and $\CV_n(\K_{i}, \K_{j})=\emptyset$ otherwise. 
If $i=j$ then $h_{i,j}$ is the identity on $\K_{i}$ and  
it is straightforward to check that  
$h_{i,j}=h_{j+1,j}\circ\cdots \circ h_{i,i-1}$ when $j<i$. 

\item[{\bf Claim 2}:]
$R_n$ and $G_n$ entail~$R$.

From Proposition~\ref{prop:Snn-subalg}(iv),  
$\mathbb{S}(\K_{i}\times\K_{j})=
\{(h_{n,i}\times h_{n,j})(\S)\colon \S\in\mathbb{S}(\K_n^2)\}$.
Using the entailment constructs in  \cite[Section 9.2]{CD98} 
(specifically term manipulation and 
homomorphic relational product), 
we see that  $\mathbb{S}(\K_n^2)$ and 
$G$
together entail~$R$. 
By Claim~1, $\mathbb{S}(\K_n^2)$ and $G_n$ entail $G$. 
Also, from Proposition~\ref{prop:Snn-subalg}(i) we have $S_{n,m} =\{\,(a,b)\mid (h_{n,m}(a),h_{n,m}(b))\in S_{m,m} \,\}$.  
We deduce from Theorem~\ref{thm:Snmonly} and 
\cite[Section 2.4]{CD98} that $\mathbb{S}(\K_n^2)$ is entailed by
 $R_n$ and $G_n$. \qedhere
\end{claimslist}
\end{proof}

Referring to the proof of Theorem~\ref{Thm:DualityVarietyKn}, 
let us see what simplification of the duality for $\CV_n$ is achieved 
by replacing $G$ and $R$ by $G_n$ and $R_n$. 
We have $|G|=\frac12{(n+2)(n+1)}$. By contrast, $|G_n|=n$. 
Let $B_1=\bigcup\{\,\mathbb{S}(\K_i^2) \mid 0 \le i \le n \,\}$. 
From Theorem~\ref{thm:Snmonly}, 
$|B_1|=
 \frac12{(3n+8)(n+1)}$. 
We use Proposition~\ref{prop:Snn-subalg}(iv) to calculate 
$|B_2|$ where $B_2=R\setminus B_1$.
We obtain 
$|B_2|=\sum_{j=0}^{n-1} 2(n-j)(3j+4)$. 
Hence $|R|=\tfrac12{(3n+8)(n+1)} 
+ n(n+1)(n+3)
=\tfrac12{(n+1)(2n^2+9n+8)}$
whereas $|R_n| =n+1$.  
But can we, using binary relations and unary operations, 
do any better?

In the setting of Section~\ref{sec:multisorted},
$\CMT = ( M_0 \du \dots \du M_n; R, G, \Tp)$ is said to yield an
\defn{optimal duality} on $\CA=\ISP(\CMT)$ if $\CMT$ yields 
a duality on $\CA$ and if the alter ego $\CMT'$ obtained by deleting 
any  member of  $R \cup G$ fails to do so.   
Recalling that each $r \in R$ is assumed to be algebraic, 
and so the subuniverse of an algebra $\mathbf{r}$ 
in $\CA$, we may  use  $\mathbf{r}$ as a test algebra  and  seek to 
show that it is not true that 
$\mathbf{r} \cong \D '\E' (\mathbf{r})$; here $\D ' $ and $\E '$ 
denote   the functors associated with the alter ego $\CMT'$ 
obtained by deleting~$r$ from $\CMT$.  
Likewise,  we may seek to show   an element $h \in G$ 
is not redundant  by using  $\dom h$ as a test algebra.  
(For a full account of the test algebra 
strategy, see \cite[Section~8.8.1]{CD98}.) 

\begin{thm}\label{thm:optimalstrong} 
The alter ego\, 
$\CMT_n=(K_0 \du\cdots\du K_n;R_n,G_n,\Tp)$, 
as defined   
 in Theorem~{\upshape\ref{Thm:DualityVarietyKn}},
yields an optimal duality on\,~$\CV_n$.
\end{thm}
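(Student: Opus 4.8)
The plan is to use the test algebra strategy for establishing optimality, as described in \cite[Section~8.8.1]{CD98}. Theorem~\ref{Thm:DualityVarietyKn} already tells us that $\CMT_n$ yields a (strong) duality, so it remains only to prove that every member of $R_n \cup G_n$ is indispensable. For each such element $z$ I would form the reduced alter ego $\CMT_n'$ obtained by deleting $z$, with associated hom-functors $\D'$ and $\E'$, and exhibit a single test algebra $\mathbf t$ on which the evaluation map $e_{\mathbf t}\colon \mathbf t \to \E'\D'(\mathbf t)$ fails to be surjective; this is enough, since a duality requires $e_{\A}$ to be an isomorphism for \emph{every} $\A$. Following the prescription recalled before the theorem, I take $\mathbf t = \dom h_{i,i-1} = \K_i$ when $z = h_{i,i-1}\in G_n$, and $\mathbf t = \S_{m,m}$ when $z = S_{m,m}\in R_n$. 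In both cases the task reduces to producing a structure map $\beta$ into $\CMT_n'$ that respects all the \emph{surviving} structure but is not an evaluation.

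First I would treat the operations, which is the easier half. Because $\K_i$ is subdirectly irreducible with no proper subalgebra, and because $\CV_n(\K_i,\K_k)=\{h_{i,k}\}$ for $k\le i$ and is empty for $k>i$ (Corollary~\ref{cor:congKn} together with the hom-set computation in the proof of Theorem~\ref{Thm:DualityVarietyKn}), the dual $\D(\K_i)$ carries exactly one point $p_k=h_{i,k}$ on each sort $k\le i$ and nothing on higher sorts. The operations link these into a single chain, $h_{k,k-1}\colon p_k\mapsto h_{k,k-1}\circ h_{i,k}=h_{i,k-1}=p_{k-1}$, while the relations $S_{k,k}$ say nothing about single points. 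Thus a morphism into $\CMT_n$ is pinned down by its value $\gamma(p_i)\in K_i$, with $\gamma(p_k)=h_{i,k}(\gamma(p_i))$ forced below; these are precisely the evaluations. Deleting $h_{i,i-1}$ breaks the chain between $p_i$ and $p_{i-1}$, and since no relation of $R_n$ connects distinct sorts, I may now choose $\gamma(p_i)\in K_i$ and $\gamma(p_{i-1})\in K_{i-1}$ independently. Taking $\gamma(p_{i-1})\ne h_{i,i-1}(\gamma(p_i))$, possible as $|K_{i-1}|\ge 4$, and letting the remaining values be forced, gives a $\CMT_n'$-morphism that is not an evaluation. Hence $h_{i,i-1}$ is indispensable.

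Next I would treat the relations, where Lemma~\ref{lem:condH} is the essential tool. It guarantees that the sort-$m$ part of $\D(\S_{m,m})$ is exactly the pair of projections $\pi_1,\pi_2$. Since $\S_{m,m}\le\K_m^2$ lies in $\CV_m$, Proposition~\ref{Prop:VarKn} shows there are no homomorphisms from $\S_{m,m}$ into any $\K_k$ with $k>m$; in particular sort $m+1$ of $\D(\S_{m,m})$ is empty, so no operation maps \emph{into} sort $m$. Consequently the only structure attached to sort $m$ is the relation $S_{m,m}$ itself and the single outgoing operation $h_{m,m-1}$ (vacuous when $m=0$). Under the full duality $(\pi_1,\pi_2)$ is $S_{m,m}$-related, which is exactly what forces every element of $\E\D(\S_{m,m})$ to send it into $S_{m,m}$ and so recovers $\S_{m,m}=S_{m,m}$. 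After deleting $S_{m,m}$, I would start from the evaluation $\widehat{x_0}$ at $x_0=(\mbt_m,\mbt_m)\in S_{m,m}$ and define $\beta$ to agree with $\widehat{x_0}$ everywhere except at $\pi_2$, where I set $\beta(\pi_2)=\mbf_m$. The only surviving constraint involving $\pi_2$ is $h_{m,m-1}$, and it still holds because $h_{m,m-1}(\mbf_m)=\top_{\!\!m}=h_{m,m-1}(\mbt_m)$ matches the unchanged value $\widehat{x_0}(h_{m,m-1}\circ\pi_2)$. Thus $\beta$ preserves all remaining structure, yet $(\beta(\pi_1),\beta(\pi_2))=(\mbt_m,\mbf_m)\notin S_{m,m}$, so $\beta$ is not an evaluation and $S_{m,m}$ is indispensable.

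I expect the relational case to be the main obstacle: the delicate point is to be certain that re-labelling $\pi_2$ breaks no surviving constraint, and this rests entirely on controlling the dual space $\D(\S_{m,m})$ near sort $m$. Lemma~\ref{lem:condH} supplies precisely this control by confining sort $m$ to the two projections, while the subvariety chain of Proposition~\ref{Prop:VarKn} rules out any incoming operation; without these two facts one could not exclude hidden constraints tying $\pi_1$ and $\pi_2$ together once $S_{m,m}$ is removed. The operational case, by contrast, is routine once the one-point-per-sort shape of $\D(\K_i)$ is recorded. Since no relation in $R_n$ and no operation in $G_n$ can be dropped, $\CMT_n$ yields an optimal duality on $\CV_n$.
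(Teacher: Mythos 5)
Your proposal is correct and takes essentially the same route as the paper's proof: the test-algebra strategy with $\S_{m,m}$ as test algebra for each relation $S_{m,m}$ (using Lemma~\ref{lem:condH} to pin down sort $m$ of $\D(\S_{m,m})$ as the two projections, and the absence of homomorphisms into higher sorts to rule out hidden constraints) and $\K_i=\dom h_{i,i-1}$ for each operation (using the one-point-per-sort chain structure of $\D(\K_i)$). Your witnessing maps are minor variants of the paper's $\gamma_m$ and $\mu_m$ and succeed for the same reasons, namely that $h_{m,m-1}$ identifies $\mbt_m$ and $\mbf_m$ and that the relations in $R_n$ never link distinct sorts.
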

\begin{proof} 
Recall that relations and homomorphisms are lifted from the multisorted alter ego 
pointwise, by sorts.
In particular, if $\A \in \CV_n$
then, for a binary relation~$r=S_{j,j} $, and $x,y \in \D(\A)$,
\[
(x,y) \in r^{\D(\A)}  \Longleftrightarrow \forall a \in \A
\bigl(  
(x(a),y(a)) \in S_{j,j}\bigr).
\]
For this to hold,  necessarily  $x,y\in \CV_n(\A,\K_j)$.  Hence 
$r^{\D(\A)}$ is the empty relation whenever 
$\CV_n(\A,\K_j) = \emptyset$.
 Likewise,  for a homomorphism $h_{i,i-1}$, 
\[
y= h_{i,i-1}^{\D(\A)}(x)  \Longleftrightarrow \forall a \in \A
\bigl(  y(a) = h_{i,i-1}(x(a)) \bigr)
\Longleftrightarrow 
y = h_{i,i-1} \circ x,
\] 
and for this to hold it is necessary that $x \in \CV_n(\A,\K_i)$ and 
$y \in \CV_n(\A,\K_{i-1})$.

Fix $m$. We will show that if $S_{m,m}$ were deleted from $R_n$, 
then the map $e_{\S_{m,m}} \colon \S_{m,m} \to \ED(\S_{m,m})$ 
cannot be surjective.  
Recall that 
$\D(\S_{m,m})= \dubig \{\, \CV_n(\S_{m,m},\K_j) \mid j \in \{0,\ldots,n\}\,\}$.
To simplify notation, we shall denote $\D(\S_{m,m})$ by~$\X$.
By Lemma~\ref{lem:condH}, 
$\CV_n(\S_{m,m},\K_m)=\{\rho_1,\rho_2\}$, where $\rho_1$ and $\rho_2$
are the restrictions of the coordinate projections. 
Also,
 $\CV_n(\S_{m,m},\K_j)=\emptyset$ if $j>m$. Now consider  $j<m$. 
Any homomorphism $g \colon \S_{m,m} \to \K_j$ is such that
$g(\top_{\!\!j+1},\top_{\!\!j+1})=\top_{\!\!j+1}$
and hence for all 
$(a,b) \le_k (\top_{\!\!j+1},\top_{\!\!j+1})$  we have 
$g(a,b)=\top_{\!\!j+1}$. 
This implies that 
$g=h_{m,j}\circ \rho_1=h_{m,j}\circ \rho_2$ and 
hence $\CV_n(\S_{m,m},\K_j)=\{h_{m,j}\circ \rho_1\}$.
Therefore 
$\X=
\{\rho_1,\rho_2\} \cup \{\, h_{m,j}\circ\rho_1 \mid 0\le j<m\,\}$.
It is easy to check that
$(\rho_1,\rho_1)$, $(\rho_2,\rho_2)$ and $(\rho_1,\rho_2)$ belong 
to $S_{m,m}^{\X}$ whereas $(\rho_2,\rho_1)$ does not. 
We now want to construct a map $\gamma_m \colon \X\to\CMT_n$ 
such that $\gamma_m$ preserves each member of 
$(R_n \setminus\{S_{m,m}\} )\cup G_n$ but does not preserve 
$S_{m,m}$. 
 We define $\gamma_m$ by 
\[
\gamma_m(x)= 
  \begin{cases} 
      \mbf_{m} &\text{if } x=\rho_1, \\
      \mbt_m &\text{if } x=\rho_2, \\
      \top_{\!\!j+1} &\text{if } x=h_{m,j}\circ \rho_1. 
  \end{cases}
\]
This map 
does not preserve $S_{m,m}$ as 
$(\gamma_m(\rho_1), \gamma_m(\rho_2))\!=
\!(\mbf_m,\mbt_m) \notin S_{m,m}$. 
Consequently $\gamma_m$ cannot be an evaluation map. 
Now we wish to show that $\gamma_m$ does preserve
the remaining structure in $\CMT_n$. 
First we deal with the relations $S_{j,j}$ for  $j \ne m$. 
If $j>m$ the relation $S_{j,j}^{\X}$ is empty.
Now consider $j<m$. 
The only element in $\CV_n(\S_{m,m},\K_j)$ is $h_{m,j}\circ\rho_1$.
 Since $S_{j,j}$ is reflexive, ${S_{j,j}^{\X} = 
\{(h_{m,j}\circ\rho_1,h_{m,j}\circ\rho_1)\}}$.
Hence $\gamma_m$ preserves  $S_{j,j}$ whenever $j \ne m$.
 We claim also that $\gamma_m$ preserves  $h_{i,i-1}$. 
If $i>m$ then 
$\CV_n(\S_{m,m},\K_i)=\emptyset$ 
and  so 
$h_{i,i-1}^{\X}$ is the empty map and trivially preserved.
If $i\leq m$ then $\CA(\S_{m,m},\K_{i-1})=
\{h_{m,i-1}\circ\rho_1\}$. 
Thus $h_{i,i-1}^{\X} = h_{m,i-1}\circ\rho_1$. 
So, for  $i<m$,
\[
\gamma_m\bigl(h_{i,i-1}^{\X}(h_{m,i}\circ\rho_1)\bigr)=
\top_{\!\!i}=h_{i,i-1}(\top_{\!\!i+1})=
h_{i,i-1}(\gamma_m(h_{m,i}\circ\rho_1)).
\]
If  $i=m$  we have 
$\gamma_m\bigl(h_{m,m-1}^{\X}(\rho_1)\bigr)=
\top_{\!\!m-1}=h_{m,m-1}(\mbf_{m})=
h_{m,m-1}(\gamma_m(\rho_1))$, 
and likewise 
$\gamma_m\bigl(h_{m,m-1}^{\X}(\rho_2)\bigr)=
h_{m,m-1}(\gamma_m(\rho_2))$. 
We conclude that $\gamma_m$ preserves $h_{i,i-1}$ for each 
$i\leq m$.

Now  we show that,  for $1\leq m\leq n$,  we cannot remove the 
 homomorphism $h_{m,m-1}$ from the alter ego.
We do this by considering  $\Y =\D(\K_m)$.
We have $\CV_n(\K_m,\K_i)=\emptyset$
if  $i>m$ and $\CV_n(\K_m,\K_i)=\{h_{m,i}\}$ if  $i\leq m$.
Therefore if $i>m$ then $h_{i,i-1}^{\Y}$ is the empty map.
If $i \leq m$ then $h_{i,i-1}^{\Y}(h_{m,i})$ has domain $\K_m$
 and is the map $h_{i,i-1} \circ h_{m,i} = h_{m,i-1}$. 
Moreover, $S_{i,i}^{\Y}=\emptyset$ if $i>m$ and $S_{i,i}^{\Y}=\{(h_{m,i},h_{m,i})\}$ for $i\leq m$.
Define $\mu_m\colon \Y \to \CMT_n$ by 
\[
\mu_m(h_{m,i})= 
\begin{cases} 
  \top_{\!\!m-1} &\text{if } i=m, \\
  \top_{\!\!i+1} &\text{if } i<m. 
\end{cases}
\]
Trivially,  $\mu_m$ preserves each $S_{j,j}$.   
Moreover, if $i<m$ then 
\[
\mu_m(h_{i,i-1}^{\Y}(h_{m,i}))=\mu_m(h_{m,i-1})=
\top_{\!\!i}=h_{i,i-1}(\top_{\!\!i+1})=h_{i,i-1}(\mu_m(h_{m,i})),
\]
that is, $\mu_m$ respects $h_{i,i-1}$ for any  $i<m$. But
$\mu_m$ does not respect   $h_{m,m-1}$:
\[
\mu_m(h_{m,m-1}^{\Y}(h_{m,m}))=\mu_m(h_{m,m-1})=\top_{\!\!m} 
\quad \mbox{  and } \quad
h_{i,i-1}(\mu_m(h_{m,m}))=h_{m,m-1}(\top_{\!\!m-1})= \top_{\!\!m-1}. \hspace*{1.8cm} \qedhere
\]
\end{proof}

We shall now characterise the objects in our dual category $\IScP(\twiddle{\CM_n})$.

\begin{thm}\label{Thm:CharDualVn}
Let $\twiddle{\CM_n}$ be the alter ego  
 defined in Theorem~{\upshape \ref{Thm:DualityVarietyKn}}. 
Then a multisorted topological structure
\[
\X=(X_0\du \ldots \du X_n ;
\leq_0,\ldots,\leq_n,g_1,\ldots, g_{n},\Tp),
\] 
where $\leq_i\, \subseteq X_i^2$ for $0\leq i\leq n$ and 
$g_j\colon X_j\to X_{j-1}$ for $1\leq j\leq n$,  belongs to\, 
$\IScP(\twiddle{\CM_n})$, 
 if and only if
\begin{newlist}
\item[{\upshape(i)}]  $(X_i;\leq_i,\Tp_i)$ is a Priestley space for 
$i\in\{0,\ldots, n\}$, where $\Tp_i$ 
is the topology induced by $\Tp$; 
\item[{\upshape(ii)}] $g_i\colon(X_i;\Tp_i) \to (X_{i-1};\Tp_{i-1}) $ 
is continuous, for $i\in\{1,\ldots,n\}$;
\item[{\upshape(iii)}] 
if $x\leq_i y$ then $g_i(x)=g_i(y)$, for 
$i\in\{1,\ldots,n\}$.
\end{newlist}
\end{thm}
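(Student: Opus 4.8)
The plan is to prove the two implications separately, treating the forward direction (membership in $\IScP(\twiddle{\CM_n})$ forces (i)--(iii)) as routine closure bookkeeping and concentrating on the converse. For necessity, I would first check that the alter ego $\twiddle{\CM_n}$ itself satisfies (i)--(iii): on each sort the relation $S_{i,i}$ is (as a relation on $K_i$) a partial order, so $(K_i;S_{i,i})$ with the discrete topology is trivially a Priestley space; each $h_{i,i-1}$ is continuous; and (iii) holds because, by the formula for $S_{i,i}$, any off-diagonal pair $(a,b)\in S_{i,i}$ satisfies $a\le_k b\le_k \top_{\!\!i}$, whence $h_{i,i-1}(a)=\top_{\!\!i}=h_{i,i-1}(b)$. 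I would then note that each of (i)--(iii) is inherited by powers formed by sorts and by closed substructures: products of Priestley spaces are Priestley and the pointwise lift of $S_{i,i}$ stays a partial order; continuity is preserved; and (iii), being a universally quantified implication about pairs, is preserved pointwise and passes to substructures. Since every object of $\IScP(\twiddle{\CM_n})$ is a closed substructure of such a power, it satisfies (i)--(iii).

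For sufficiency, let $\X$ satisfy (i)--(iii). I would show that the evaluation map $e\colon \X \to \twiddle{\CM_n}^{\Phi}$, where $\Phi$ is the set of all morphisms $\X \to \twiddle{\CM_n}$, embeds $\X$ as a closed substructure, which places $\X$ in $\IScP(\twiddle{\CM_n})$. Because $\X$ is compact and each sort of the power is Hausdorff, $e$ is automatically continuous with closed, compact image, and the operations $g_j$ are reflected as soon as $e$ is injective; so the task reduces to producing, for each sort $X_i$ and each pair with $x\not\le_i y$, a morphism $\phi\in\Phi$ with $(\phi_i(x),\phi_i(y))\notin S_{i,i}$ (this both separates points and reflects the order). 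The structural fact I would exploit is that $(K_i;S_{i,i})$ splits as a four-element diamond $\top_{\!\!i+1}<\mbf_i,\mbt_i<\top_{\!\!i}$ together with an antichain of isolated points, and that $h_{i,i-1}$ collapses precisely this diamond to the bottom $\top_{\!\!i}$ of $K_{i-1}$ while fixing every other element.

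To build the required $\phi$ I would work from the bottom sort upwards. Given $x\not\le_i y$ in $X_i$, Priestley duality on $(X_i;\le_i)$ supplies a clopen up-set $U$ with $x\in U$ and $y\notin U$. Set $\phi_j\equiv\top_{\!\!j+1}$ (the bottom of $K_j$) for $j<i$; let $\phi_i$ take the value $\top_{\!\!i}$ on $U$ and $\top_{\!\!i+1}$ off $U$; and for $j>i$ define $\phi_j$ by lifting $\phi_{j-1}\circ g_j$ through $h_{j,j-1}$, namely the (unique, non-bottom) forced value on the clopen set where $\phi_{j-1}\circ g_j$ is not bottom and the constant $\top_{\!\!j+1}$ on its complement. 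The compatibility equations $h_{j,j-1}\circ\phi_j=\phi_{j-1}\circ g_j$ hold by construction, $\phi_i$ is order-preserving because $U$ is an up-set, and continuity is clear since all the pieces are clopen. The point where condition (iii) is essential is the order-preservation of the lifts $\phi_j$ for $j>i$: by (iii) any two $\le_j$-comparable points have the same image under $g_j$, hence lie in the same clopen piece, on which $\phi_j$ is constant, so comparable pairs map into $\Delta_n\subseteq S_{j,j}$. Finally $(\phi_i(x),\phi_i(y))=(\top_{\!\!i},\top_{\!\!i+1})\notin S_{i,i}$, as required, so $e$ is injective and order-reflecting and hence a closed embedding.

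The main obstacle is this sufficiency direction, and within it the simultaneous fulfilment of three demands on the components $\phi_j$: order-preservation into $S_{j,j}$, continuity, and compatibility with the linking maps. Condition (iii) is exactly what reconciles them, since it forces the ``free'' region (where the lower map is bottom, and the target diamond is available) and the ``forced'' region at each level to be order-disconnected, so that independent continuous choices on the two clopen parts glue into a single order-preserving morphism; without (iii) the lifted maps need not respect $S_{j,j}$.
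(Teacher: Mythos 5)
Your proposal is correct and follows essentially the same route as the paper: necessity by checking (i)--(iii) on $\twiddle{\CM_n}$ itself and noting preservation under sorted powers and closed substructures, and sufficiency by constructing, for each $x\not\leq_i y$, a two-valued morphism that is constantly bottom on the sorts below $i$, splits sort $i$ along a clopen up-set, and is pulled back through the maps $g_j$ on the sorts above $i$, with condition (iii) guaranteeing order-preservation there. The only cosmetic difference is that you package the reduction as an explicit evaluation embedding into $\twiddle{\CM_n}^{\Phi}$, whereas the paper delegates exactly that step to the Separation Theorem for topological quasivarieties \cite[Theorem~1.4.4]{CD98}.
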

\begin{proof}
Clearly $S_{m,m}$ is a partial order on $K_{m}$.
And if $a,b\in\K_m$ with $1\leq m\leq n$ are such that $a\neq b$ 
and $(a,b)\in S_{m,m}$, then $a,b\leq \top_{\!\!m}$. 
Then $h_{m,m-1}(a)=h_{m,m-1}(b)=\top_{\!\!m}$. 
This proves that  $\twiddle{\CM_n}$ satisfies (i), (ii) and (iii).
 Since each of (i), 
(ii) and (iii) are preserved under products and 
closed substructures, each 
$\X\in\IScP(\twiddle{\spc{K}_n})$ 
satisfies them.
	
To prove the converse, we shall invoke \cite[Theorem~1.4.4]{CD98}.
 Assume that  $\X$ satisfies (i), (ii) and (iii) and let $x,y\in X$ and 
$i\in\{0,\ldots,n\}$ be such that $x\not\leq_i y$.
By (i), since $(X_i;\leq_i,\Tp_i)$ is a Priestley space, 
 there exists a 
clopen up-set $U_{x,y,i}$ such that $x\in U_{x,y,i}$ and 
$y\notin U_{x,y,i}$.
Define $U_{j}\subseteq X_j$ by 
\[
U_j=
\begin{cases}
   U_{x,y,i}&\mbox{ if }j=i,\\
   X_j&\mbox{ if }j<i,\\
   (g_{j}\circ\cdots\circ g_{i+1})^{-1}(U_{x,y,i})&\mbox{ if }j>i.  
\end{cases}
\]
By (ii), each $g_j$ is continuous and,  since $U_{x,y,i}$ is clopen,
each $U_{j}$ is clopen. 
By (iii), each $U_j$ is also an up-set. 
Define  $f_{x,y,i}\colon\X \to \twiddle{\CM}$  by letting
$f_{x,y,i}(z)=\top_{\!\!i}\in K_{j}$  if $z\in U_{j}$ and 
$f_{x,y,i}(z)=\top_{\!\!j+1}\in  K_{j}$ otherwise.
Each  $U_{j}$ is a clopen up-set, so  $f_{x,y,i}$ is order-preserving
and continuous sort-wise. 
Let $z\in X_j$ with $1\leq j\leq n$. 
If $j<i$ then $f_{x,y,i}(z)=\top_{\!\!j+1}$ and 
$f_{x,y,i}(g_{j}(z))=\top_{\!\!j}$. 
Then $f_{x,y,i}(g_{j}(z))=g_{j}(f_{x,y,i}(z))$. 
If $j> i$, then $f_{x,y,i}(g_{j}(z))=\top_{\!\!i}$ if and only if 
$g_{j}(z)\in U_{j-1}$. 
That is,  $f_{x,y,i}(g_{j}(z))=\top_{\!\!i}$ if and only if $z\in U_{j}$. 
Then  $f_{x,y,i}(g_{j}(z))=\top_{\!\!i}$ if and only if 
$f_{x,y,i}(z)=\top_{\!\!i}$. 
We deduce that $f_{x,y,i}(g_{j}(z))=g_{j}(f_{x,y,i}(z))$. 
If $j=i$, then $g_j(f_{x,y,i}(z))=g_i(f_{x,y,i}(z))=
\top_{\!\!i}=f_{x,y,i}(g_i(z))$.
This proves that $f_{x,y,i}$ is a morphism from $\X$ into 
$\twiddle{\CM_n}$ and that 
$(f_{x,y,i}(x), f_{x,y,i}(y))=(\top_{\!\!i} ,\top_{\!\!i+1})\notin S_{i,i}$. 
It follows that $\X\in\IScP(\twiddle{\CM_n})$.
\end{proof}

\section{Relating the natural duality for  
$\CV_n$ to Priestley 
duality}
\label{sec:RevEng} 

The principal result in this section is Theorem~\ref{Thm:RevEngVarietyKn}.
It will enable us to give 
information about free algebras in the varieties
$\CV_n$ and will later throw light on the  product 
representation we present in Section~\ref{sec:ProdRep}.  
However in carrying out our analysis 
we call on recent results from \cite{CP1}, and so on aspects of 
duality theory for $\CCD$-based algebras that we have not needed 
hitherto.  
Section~\ref{sec:ProdRep} can if desired be read
with almost no reference to this section.

We shall  presuppose that the reader has some familiarity with 
basic facts concerning  Priestley duality and its 
consequences.
 We 
recall that we use the non-generic symbols $\fnt{H}$ 
and 
$\fnt{K}$ for the functors setting up Priestley duality,
retaining $\D$ and $\E$ for the functors setting up the 
 duality for $\CV_n$ given in 
Theorem~\ref{Thm:DualityVarietyKn}.
We may  identify a lattice~$\Lalg$ in $\CCD$ with $\fnt{KH}(\Lalg)$. 
Identifying a continuous order-preserving function $x$ from  
$\fnt{H}(\Lalg)$, the Priestley dual space of $\Lalg$,
into $\twoT$ with the set $x^{-1}(1)$, we may when convenient 
regard $\Lalg$ as the lattice of clopen up-sets of $\fnt{H}(\Lalg)$.   
A full account of Priestley duality and its consequences  can be 
found for example in \cite[Chapters~5 and 11]{ILO2}, 
but  we warn that the treatment there works with down-sets 
rather than up-sets.

As we have observed earlier,   the algebras in $\CV_n$ have reducts in the
 category $\CCD$ of  bounded distributive lattices.  
Formally, there exists a natural forgetful 
functor $\fnt{U} $ from  $\bigcup_{n\geq0} \, \CV_n $ to $\CCD$, 
sending an algebra  $\A$ to $(A;\otimes,\oplus,\bot,\top)$  
and each morphism to the same map, now  regarded as a 
$\CCD$-morphism. 
We shall investigate the relationship between 
the natural duality we have set up for $\CV_n$ on the 
one hand and Priestley duality as it applies to the subcategory 
$\fnt{U}(\CV_n)$ of $\CCD$ on the other.  
Of necessity, we work 
with knowledge lattice reducts since for $n \geq 1$ the truth 
lattice reduct is not distributive.  
This means that the treatment below  
does not align fully 
with that for $n=0$ given in \cite{CP1}.  
The difference is more notational than real and 
we can recommend the account given in \cite{CP1} for the special case 
as an introduction to ideas we shall use  also for general~$n$. 
%%%%

Fix $n \geq 1$.  We want to know how the  multisorted dual space 
$\D(\A)$ is related to  the Priestley dual space  $\fnt{HU}(\A)$ of 
$\fnt{U}(\A)$ for $\A\in \CV_n$ 
(from which $\fnt{U}(\A)$ can be recovered by Priestley duality).
For any  finitely generated $\CCD$-based variety,  and in particular 
for $\CV_n$, it is possible to set up an economical natural duality 
by  what is known as the piggybacking method,
 without
recourse to the NU Duality Theorem; 
see~\cite[Chapter~7]{CD98}. 
Furthermore,  this piggyback duality can be related to Priestley 
duality as it applies to the $\CCD$-reducts,
as shown in 
\cite[Section~2]{CPcop}.
 We opted, however,  not to employ this method to set up a natural 
duality for $\CV_n$. 
To have done so  would have involved at the outset additional  
theoretical machinery  and would not have yielded a quicker 
or more informative  derivation.  
But now, with the insights gleaned from the approach we adopted in 
Sections~\ref{sec:ClassKn}--\ref{sec:varKn}, it is profitable to 
reconcile Theorem~\ref{Thm:DualityVarietyKn} with 
results from \cite{CPcop}.  
This reconciliation  elucidates Theorem~\ref{Thm:CharDualVn}
and provides a bridge to the  
product representation in 
due course.

In preparation for Theorem~\ref{Thm:RevEngVarietyKn} we need to
relate our duality for $\CV_n$ from 
Theorem~\ref{Thm:DualityVarietyKn}  to the results of
 \cite[Section~2]{CPcop} as they apply to $\CA$, 
where $\CA= \CV_n = \ISP(\CMT_n)$.
The key here---and we cannot emphasise this too strongly---% 
is the relationship between $\CMT_n$, viewed as a member of our 
natural dual category, and the sets $ \CCD(\fnt{U}(\K_i), \two)$,  
for $0 \leq i \leq n$.  
For each such~$i$ let $\omega_i^t $ and $\omega_i^f $ be the 
elements of   $\CCD(\fnt{U}(\K_i), \two)$ for which 
$(\omega_i^t)^{-1}(1) =  {\uparrow} \mbt_i$ and
 $(\omega_i^f)^{-1}(1) = {\uparrow} \mbf_i$;
 here the up-sets are calculated with respect to the 
$\leq_k$ order on $K_i$.   
Let $\Omega_{\CMT_n} = 
\bigcup_{0\leq i \leq n} \, \{\omega_i^t, \omega_i^f\}$.

\begin{lem}  \label{lem:piggy-reconcile}
Let $\Omega_{\CMT_n}$ be as above.  
\begin{newlist}
\item[{\upshape (i)}] 
The following separation condition holds:
given $m\in \{0, \ldots, n\}$ and $a \ne b \in \K_m$, there exists 
$j \in \{0,\ldots,m\}$  and 
$\omega \in  \{\omega_j^t,\omega_j^f\}$ 
such that  $\omega (h_{m,j}(a)) \ne \omega (h_{m,j}(b))$.

\item[{\upshape (ii)}]  Let $j,m \in \{ 0,\ldots,n\}$.   For 
$\omega\in \{ \omega_j^t,\omega_j^f\}$ and $\omega'  \in \{
\omega_m^t,\omega_m^f\}$, let $R_{\omega,\omega' }$  
be the set of binary algebraic relations which are 
 maximal with respect to being contained in
$
\{\, (a,b) \in K_j \times K_m \mid  \omega(a) \leq \omega'  (b)\,\}$.
Then 
\begin{newlist}
\item[{\upshape (a)}]  $R_{\omega_m^t,\omega_m^t} = 
R_{\omega_m^f,\omega_m^f}  =\{ S_{m,m}\}$;
 \item[{\upshape (b)}]  if $j = m$ and $\omega \ne \omega' $ or 
$j > m$, then $R_{\omega,\omega' }= \emptyset$;
\item[{\upshape (c)}]  if $j < m$ then 
$R_{\omega,\omega' } = \{ 
(\graph h_{m,j})^\smallsmile\}$.
\end{newlist}
\end{newlist} 
\end{lem}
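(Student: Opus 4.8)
The plan is to prove (i) as a separation statement about the distributive knowledge reducts, and to prove (ii) by a finite computation built on the complete list of subalgebras of $\K_i\times\K_j$ given by Theorem~\ref{thm:Snmonly} and Proposition~\ref{prop:Snn-subalg}(iv).

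For (i) I would first identify the composites $\omega\circ h_{m,j}$. Directly from the definition of $h_{m,j}$ and of the up-sets $\uparrow\mbt_j,\uparrow\mbf_j$, one checks that for $a\in K_m$ the value $\omega_j^t(h_{m,j}(a))$ equals $1$ exactly when $a\ge_k\mbt_j$ in $\K_m$, and likewise for $\omega_j^f$; thus $\omega_j^t\circ h_{m,j}$ and $\omega_j^f\circ h_{m,j}$ are precisely the homomorphisms $\fnt{U}(\K_m)\to\two$ determined by the prime filters $\uparrow\mbt_j$ and $\uparrow\mbf_j$ of $\K_m$. Since the join-irreducibles of the distributive lattice $\fnt{U}(\K_m)$ are exactly the elements $\mbt_i,\mbf_i$ $(0\le i\le m)$, these $2(m+1)$ maps exhaust $\CCD(\fnt{U}(\K_m),\two)$ and hence separate the points of $\fnt{U}(\K_m)$. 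Given $a\ne b$ in $\K_m$, a prime filter containing exactly one of $a,b$ supplies the required $j$ and $\omega$, proving (i).

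For (ii) write $U_{\omega,\omega'}=\{(a,b)\in K_j\times K_m\mid\omega(a)\le\omega'(b)\}$; its complement is the product $\omega^{-1}(1)\times\omega'^{-1}(0)$ of a prime filter and a prime ideal, so a subalgebra lies in $U_{\omega,\omega'}$ precisely when it contains no pair with first coordinate in $\omega^{-1}(1)$ and second coordinate in $\omega'^{-1}(0)$. I would then test each subalgebra from the classification against this forbidden box. For (a) ($j=m$, $\omega=\omega'$) I would check $S_{m,m}\subseteq U_{\omega,\omega'}$ while $\conv{\S}_{m,m}$, each $\S_{m,k}$ with $k<m$, and $\K_m^2$ all meet the box (via $(\top_{\!\!m},\mbf_m)$, $(\mbt_m,\mbf_m)$, and the like); since every subalgebra contains $\Delta_m=\S_{m,m}\cap\conv{\S}_{m,m}$, this leaves $S_{m,m}$ as the unique maximal subalgebra inside $U_{\omega,\omega'}$. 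For (b) I would use that the smallest subalgebra of $\K_j\times\K_m$ is $\Delta_m$ when $j=m$ and $\graph h_{j,m}$ when $j>m$: if $j=m$ and $\omega\ne\omega'$ then one of $(\mbt_m,\mbt_m),(\mbf_m,\mbf_m)\in\Delta_m$ is forbidden, and if $j>m$ then $(\mbt_j,h_{j,m}(\mbt_j))=(\mbt_j,\bot)\in\graph h_{j,m}$ is forbidden; as no subalgebra can avoid its smallest subalgebra, $R_{\omega,\omega'}=\emptyset$.

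Case (c), with $j<m$, is where I expect the real difficulty. The smallest subalgebra of $\K_j\times\K_m$ is now $(\graph h_{m,j})^\smallsmile$, and I would first confirm that it lies in $U_{\omega,\omega'}$ for all four choices of $(\omega,\omega')$: any of its pairs whose second coordinate lands in $\omega'^{-1}(0)\subseteq{\downarrow}_k\top_{\!\!m}$ has first coordinate $h_{m,j}(\cdot)=\top_{\!\!j+1}$, which $\omega$ sends to $0$, so the box is avoided. The substantive part is maximality and uniqueness: I would run the images $(h_{m,j}\times\mathrm{id})(\S)$, for $\S\in\mathbb{S}(\K_m^2)$, through the box test and pin down the largest one that survives. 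The obstacle is that $h_{m,j}$ collapses the whole bottom block of $\K_m$ to $\top_{\!\!j+1}$ and thereby neutralises many pairs that would otherwise be forbidden, so a coarse argument fails; the verification has to track, block by block, where each coordinate sits and how the collapsing map interacts with the order $S_{j,j}$ on the surviving top block. This bookkeeping is the least mechanical step and the crux of the lemma.
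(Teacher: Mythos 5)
Your part (i) is correct and takes a cleaner route than the paper's: instead of the paper's two-case analysis of how $a$ and $b$ sit in the knowledge order, you identify $\omega_j^t\circ h_{m,j}$ and $\omega_j^f\circ h_{m,j}$ with the characteristic functions of the prime filters ${\uparrow}_k\mbt_j$ and ${\uparrow}_k\mbf_j$ of $\fnt{U}(\K_m)$ and note that $\mbt_0,\mbf_0,\ldots,\mbt_m,\mbf_m$ exhaust the join-irreducibles, so these filters separate points. Parts (ii)(a) and (ii)(b) also go through essentially as in the paper's proof (in (b) for $j>m$ you should exhibit both forbidden pairs $(\mbt_j,\bot)$ and $(\mbf_j,\bot)$, one for each choice of $\omega$, but that is cosmetic).

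The genuine gap is (ii)(c), which you explicitly leave as ``the crux'' without carrying out the box test you describe. This is not a harmless deferral of routine bookkeeping, because the test does not terminate where the statement says it should. Take $\S=\S_{m,j}\in\mathbb{S}(\K_m^2)$ and its image $\mathbf{r}=(h_{m,j}\times\mathrm{id})(\S_{m,j})\leq\K_j\times\K_m$. If $(c,b)\in S_{m,j}$ and $\omega(h_{m,j}(c))=1$, then $c\ge_k\mbt_j$ or $c\ge_k\mbf_j$, which rules out the clause $c,b\le_k\top_{\!\!j+1}$; the surviving clauses give $b\ge_k c$, and $c>_k\top_{\!\!j+1}\ge_k\top_{\!\!m}>_k\mbt_m,\mbf_m$ forces $\omega'(b)=1$. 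Hence $\mathbf{r}$ avoids the forbidden box for all four choices of $(\omega,\omega')$, yet it strictly contains $(\graph h_{m,j})^\smallsmile$: it contains $(\mbf_j,\top_{\!\!j})$, coming from $(\mbf_j,\top_{\!\!j})\in S_{m,j}$, whereas $h_{m,j}(\top_{\!\!j})=\top_{\!\!j}\neq\mbf_j$. So ``the largest image that survives'' is not $(\graph h_{m,j})^\smallsmile$, and your procedure as described cannot reach the stated conclusion without further argument. Note that the paper's own proof of (c) disposes of such $s$ by asserting that $s\not\subseteq S_{m,j}\cap\conv{S}_{m,j}$ forces $(\top_{\!\!j},\top_{\!\!m+1})\in s$, which fails precisely for $s=S_{m,j}$; the difficulty you sensed is therefore real and must be confronted explicitly rather than postponed.
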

\begin{proof}  
Consider  (i). 
Take $a \ne b$ in $\K_m$, and assume without loss of generality 
that $a \nleqslant_k b$. We consider two cases. 
Assume first  that  there exists $j\in\{0,\ldots,n\}$ with 
$b<_k \top_{\!\!j+1} \leq_k a$. 
Then $\omega_j^t(h_{m,j+1}(a)) = \omega_j^f(h_{m,j+1}(a)) = 1$.  
On the other hand, $\omega_j^t(h_{m,j}(b))= 0$ or 
$\omega_j^f(h_{m,j}(b)) = 0$, or both,  must hold.
Now  assume that 
$a,b \in  \{ \top_{\!\!j}, \mbt_j, \mbf_j, \top_{\!\!j+1} \}$
for some~$j$.  
Then for $\omega = \omega_j^t$ or for $\omega = \omega_j^f$ 
we have $\omega(h_{m,j}(a)) \ne \omega(h_{m,j}(b))$.   
    
All  the ingredients for the proof  of (ii) are  given  
in our 
earlier analysis of subalgebras of products $\K_j \times \K_m$. 
We know that $S_{m,j} \subsetneq S_{m,i}$ for 
$0 \leq i < j \leq m$.  This fact, combined with Theorem~\ref{thm:Snmonly},
tells us that (ii)(a) holds provided  
$\omega_m^t(a) \leq \omega_m^t(b)$ for each $(a,b)\in S_{m,m}$ and $\omega_m^t(d) \not\leq \omega_m^t(c)$ for some $(c,d)\in S_{m,m}$,   
and likewise with
$\omega^t_m$ replaced by $\omega^f_m$. 
Since $S_{m,m} = \Delta_m\cup \{ \, (a,b) \mid a \leq_k b \leq_k \top_{\!\!m}\,\}$
and $\omega_m^t$ and $\omega_m^f$ are order-preserving, 
these claims are easily verified.  

We now prove (ii)(b). 
First take  $j=m$ and $\omega\ne \omega' $ and 
assume that there 
existed $\mathbf{r} \in \mathbb S(\K_m^2)$ such that $r \in  R_{\omega,\omega' }$.  Necessarily 
$\mathbf{r} \supseteq \Delta_m$.  But  $\omega(p) \nleqslant \omega' (p)$
either for $p = \mbt_m$ or for $p = \mbf_m$. Hence $R_{\omega,\omega'}= \emptyset$. 
 Now consider $j > m$ and consider $\mathbf{r}$ in $\mathbb{S}(\K_j \times \K_m)$.  By Proposition~\ref{prop:Snn-subalg}(iv) 
there exists $\mathbf{s}\in \mathbb{S}(\K_j \times \K_j)$ such that
$r = \{ \, (a, h_{j,m}(b)) \mid   (a,b) \in s \,\}$.  
Hence  $(\mbt_j, \top_{\!\!m+1}),(\mbf_j,\top_{\!\!m+1}) \in r$. 
Then $\omega_j^t(\mbt_j)=1\not\le 0= \omega' ( \top_{\!\!m+1})$, which implies that $r \notin R_{\omega_j^t,\omega '}$. Similarly,  $\omega_j^f(\mbf_j)=1\not\le 0= \omega' ( \top_{\!\!m+1})$ implies that
$r \notin R_{\omega_j^f, \omega '}$, which concludes the proof of (b).

Finally we prove (ii)(c).   Assume $j < m$.  Then, for any  
$r \in R_{\omega,\omega' }$, we have $\mathbf{r} \in \mathbb{S}(\K_j \times 
\K_m)$ and hence there exists $\mathbf{s} \in \mathbb{S}(\K_m^2)$ 
such that 
$r = \{ \, (h_{m,j}(c),b) \mid  (c,b) \in s\,\}$.  
It is easily seen that $r \subseteq  \{\, (a,b) \mid \omega(a) \leq \omega ' (b)\,\}$ when 
$\mathbf{s} =\Delta_m^2$.  Hence 
$r$ 
is the converse of the graph of  $h_{m,j} $ and it
belongs to   $R_{\omega, \omega' }$. 
If 
$s\subseteq S_{m,j}\cap\conv{S}_{m,j}$, 
then 
$r=(\graph h_{m,j})^\smallsmile$. 
If 
$s\not\subseteq S_{m,j}\cap\conv{S}_{m,j}$, 
Theorem~\ref{thm:Snmonly} implies 
that $(\top_{\!\!j},\top_{\!\!m+1})\in
s$. 
Then $(\top_{\!\!j},\top_{\!\!m+1})\in r$. It follows that 
$\omega(\top_{\!\!j})=1
\nleqslant 0=\omega' (\top_{\!\!m+1})$ and 
$r\notin R_{\omega,\omega' }$.
\end{proof}

The definition of $\preccurlyeq$ in the following theorem may 
appear complicated, but the intuition behind it is quite simple.  
We consider the  natural dual space of an algebra $\A \in \CV_n$. 
This is a multisorted structure of the type described in 
Theorem~\ref{Thm:CharDualVn}. 
We first `double up'  each sort $X_m$ and give the doubled-up set 
an order determined by the partial order  
$\leq_m$
and the maps 
$\omega_m^t$ and $\omega_m^f$.  
Then we use the maps $g_i$ to arrange these sets  in layers, 
in order of increasing~$m$.

\begin{thm} \label{Thm:RevEngVarietyKn}
Let\,  $\A \in\CV_n$ and let\, 
$\fnt{D}(\A)$ be the structure
\,
$\X=(X_0\du \ldots \du X_n 
;\leq_0,\ldots,\leq_n,g_1,\ldots, g_{n},\Tp)$.

Let 
\, $Y = \textstyle{\bigcup} \,\{\, 
 X_m \times \{ \omega_m^t, \omega_m^f\} \mid 0 \leq m\leq n\,\}$.
Define a relation $\preccurlyeq $ on $Y$ by 
\begin{align*}
(x,\omega) \preccurlyeq (y,\omega' ) &\Longleftrightarrow  
x \leq_i y \mbox{ and } \omega=\omega' \in\{\omega_i^t,\omega_i^f\}; 
\text{ or }
\\& \hspace*{2.5cm} 
(x = (g_{i+1}\circ \cdots \circ g_j )(y), \mbox{ and }
\, \omega\in\{\omega_i^t,\omega_i^f\}, \omega' \in\{\omega_j^t,\omega_j^f\}, \mbox{ for some  }i<j).
\end{align*}
Then 
$\Y = (Y; \preccurlyeq, \Tp)$ is a Priestley space isomorphic to\, $\fnt{HU}(\A)$.
\end{thm}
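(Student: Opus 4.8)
The plan is to exhibit an explicit order-isomorphism and homeomorphism $\Phi\colon\Y\to\fnt{HU}(\A)$ and then to deduce the Priestley-space structure of $\Y$ by transport. For $(x,\omega)\in Y$ with $x\in X_m$ and $\omega\in\{\omega_m^t,\omega_m^f\}$, I would set $\Phi(x,\omega)=\omega\circ\fnt{U}(x)$; since $x\colon\A\to\K_m$ is a $\CV_n$-morphism and $\omega\in\CCD(\fnt{U}(\K_m),\two)$, this is a point of $\fnt{HU}(\A)=\CCD(\fnt{U}(\A),\two)$. The reconciliation of a natural duality with Priestley duality carried out in \cite[Section~2]{CPcop} applies here: its hypotheses are exactly the separation condition of Lemma~\ref{lem:piggy-reconcile}(i), so that result exhibits $\Phi$ as a continuous surjection and shows that the pointwise Priestley order on $\fnt{HU}(\A)$ pulls back along $\Phi$ to a relation on $Y$. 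The substance of the proof is then to check that this pulled-back order coincides with the relation $\preccurlyeq$ as defined.

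To compute the pulled-back order I would use that, the Priestley order on $\fnt{HU}(\A)$ being pointwise, $\Phi(x,\omega)\leq\Phi(y,\omega')$ holds precisely when $\omega(x(a))\leq\omega'(y(a))$ for all $a\in A$. Writing $x\in X_i$ and $y\in X_j$, the image of the homomorphism $(x,y)\colon\A\to\K_i\times\K_j$ is a subalgebra of $\K_i\times\K_j$ contained in $\{(p,q)\mid\omega(p)\leq\omega'(q)\}$, hence contained in some maximal such algebraic relation; consequently $\Phi(x,\omega)\leq\Phi(y,\omega')$ if and only if $(x,y)\in r^{\D(\A)}$ for some $r\in R_{\omega,\omega'}$. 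Now I feed in Lemma~\ref{lem:piggy-reconcile}(ii). Part~(a) gives $R_{\omega,\omega'}=\{S_{m,m}\}$ when $i=j=m$ and $\omega=\omega'$, and since $\leq_m=S_{m,m}^{\D(\A)}$ this yields the first clause of $\preccurlyeq$. Part~(b) gives $R_{\omega,\omega'}=\emptyset$ when $i=j$ with $\omega\neq\omega'$ and when $i>j$, so no such pairs become comparable, matching the fact that $\preccurlyeq$ relates distinct sorts only upwards. Part~(c) treats $i<j$: here $R_{\omega,\omega'}=\{(\graph h_{j,i})^{\smallsmile}\}$, and since each $g_k$ sends $z\mapsto h_{k,k-1}\circ z$, the composite $g_{i+1}\circ\cdots\circ g_j$ sends $y\mapsto h_{j,i}\circ y$; thus $(x,y)\in r^{\D(\A)}$ for $r=(\graph h_{j,i})^{\smallsmile}$ is equivalent to $x=(g_{i+1}\circ\cdots\circ g_j)(y)$, which is exactly the second clause. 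Hence $\preccurlyeq$ is the pulled-back order and $\Phi$ is an order-isomorphism onto its image.

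It remains to see that $\Phi$ is a bijection and a homeomorphism. Surjectivity comes from \cite{CPcop} as noted above; for injectivity I would argue directly that $\preccurlyeq$ is antisymmetric, whence $\Phi(p)=\Phi(q)$ forces $p\preccurlyeq q\preccurlyeq p$ and so $p=q$. Antisymmetry is immediate: two elements comparable both ways under the first clause lie in a common sort and agree by antisymmetry of $\leq_m$, while the second clause strictly increases the sort index and so admits no two-cycle, and the two clauses cannot combine to relate elements of different sorts in both directions. Continuity of $\Phi$ is the usual evaluation argument, and as a continuous bijection between Boolean spaces it is a homeomorphism. Being simultaneously an order-isomorphism and a homeomorphism onto the Priestley space $\fnt{HU}(\A)$, the map $\Phi$ transports the Priestley structure back to $\Y$, so $\Y$ is a Priestley space isomorphic to $\fnt{HU}(\A)$.

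The main obstacle I anticipate is the bookkeeping in the second paragraph: correctly tracking sorts and the direction of the converse graph in Lemma~\ref{lem:piggy-reconcile}(ii)(c), and verifying that the composite $g_{i+1}\circ\cdots\circ g_j$ of natural-dual maps is precisely $h_{j,i}\circ(\,\cdot\,)$, so that clause~(c) of the order matches the second defining clause of $\preccurlyeq$. By contrast the surjectivity of $\Phi$, which would otherwise be the crux, is supplied wholesale by the piggyback machinery of \cite{CPcop} once Lemma~\ref{lem:piggy-reconcile}(i) has verified the separation condition.
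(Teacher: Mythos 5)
Your proposal is correct and takes essentially the same route as the paper: both rest on the piggyback reconciliation of \cite{CPcop} together with the separation condition of Lemma~\ref{lem:piggy-reconcile}(i) to obtain the quasi-order/surjection onto $\fnt{HU}(\A)$, and then use Lemma~\ref{lem:piggy-reconcile}(ii) to identify the induced order with $\preccurlyeq$ and to establish antisymmetry (equivalently, injectivity). You simply unpack the content of \cite[Theorems~2.1 and~2.3]{CPcop} more explicitly, via the map $(x,\omega)\mapsto\omega\circ\fnt{U}(x)$ and the maximal relations $R_{\omega,\omega'}$, where the paper cites them as a black box.
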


\begin{proof}   
It is a  consequence of \cite[Theorems~2.1 and~2.3]{CPcop} 
and the  separation condition established in 
Lemma~\ref{lem:piggy-reconcile}(i) that $\preccurlyeq$ is a 
quasi-order on $Y$ for which the partially ordered 
space obtained by quotienting by $\preccurlyeq \cap \succcurlyeq$
 is isomorphic in $\CP$ to $\fnt{HU}(\A)$. 

We now claim that Lemma~\ref{lem:piggy-reconcile}(ii) 
implies   that $\preccurlyeq$ is a partial order rather than just a 
quasi-order.  
Each of the two copies of the sort $X_m$ carries the pointwise
lifting of the partial order $S_{m,m}$, and no pair of elements, one 
from each copy, is related by $\preccurlyeq$.  
We view 
$(X_m \times \{ \omega_m^t\}) \du (X_m\times\{ \omega_m^f\})$
as constituting the 
$m^{\text{th}}$ level of $\fnt{HU}(\A)$, for $m=0,\ldots,n$.
Lemma~\ref{lem:piggy-reconcile}(ii)(b) and (c) tell us that, 
with respect to $\preccurlyeq$, no point at level~$m$ is related to a point at a strictly lower level.  
\end{proof}

In preparation for analysing  the structure of the spaces 
$\fnt{HU}(\A)$ in particular cases
we  present some order-theoretic 
constructions involved in building such spaces. 
Consider first  posets $S$ and $T$ and a map $\phi\colon 
T \to S$. 
 Assume  that~$\phi$ is \defn{semi-constant},
 in the sense that it maps each order component 
of $T$ to a singleton  (see condition (iii) in 
Theorem~\ref{Thm:CharDualVn});
 any such map  is necessarily order-preserving.
The  \defn{restricted linear sum} 
$S \oplus_\phi T$
will be the poset obtained by equipping  the disjoint union 
$ S \du T$ with the relation  
$\leq_S \cup \leq_T \cup \, (\graph \phi)^\smallsmile$;
here $\leq_S$ and $\leq_T$ are the partial orders on $S$ and $T$, 
respectively.  

Take $T \overset{\phi}{\longrightarrow}  S$ as above.
We can then form a new poset, which we denote by  
$\overline{S} \oplus_{\overline{\phi}}\overline{T}$ 
refer to as the \defn{doubling} of 
$S \oplus_{\phi} T$.   The construction goes as follows    
 Take the disjoint union $\overline{S}$ of copies $S^1$ and $S^2$ 
of $S$ and the disjoint union $\overline{T}$ of copies $T^1$ 
and $T^2$ of $T$.
We let $\phi $ induce in the obvious way maps 
$\phi^{i,j}\colon T^i \to S^j$  (for $i,j \in \{1,2\}$) and form the restricted 
linear sums $T_j \overset{\phi_{i,j}}{\longrightarrow}  S_i$.    
Pasting the
order relations  together in the obvious way 
by taking their union
we obtain $T \overset{\phi}{\longrightarrow}  S$.

The two constructions above can  unambiguously be extended to 
the situation in which we start from any  finite sequence  
$P_0, \ldots ,P_n$ of  posets and semi-constant maps 
$\phi_i \colon P_{i} \to  P_{i-1}$.  
We can first form an iterated restricted linear sum 
$P_0 \oplus_{\phi_1}P_{1} \oplus_{\phi_{2}} \dots 
\oplus_{\phi_{n-1}}   P_{n-1} \oplus_{\phi_n}  P_n$.
Pictorially, that is,  in terms of a Hasse diagram, 
we view this  poset as having $(n+1)$ layers. 
The $m^{\text{th}}$-layer is $P_m$, and the ordering between
 the  layer $P_{m-1}$ and the layer $P_{m}$ above it is determined 
by $\phi_m$, for $m \geq 1$.  
Now we  can apply the doubling construction,  
extended in the obvious way, to obtain a new poset 
$Q_0 \oplus_{\psi_1} Q_{1} \oplus_{\psi_{2}} \dots 
\oplus_{\psi_{n-1}}   Q_{n-1} \oplus_{\psi_n}  Q_n$, 
where $Q_i = \overline{P_i}$ for $0 \leq i \leq n$ and 
$\psi_i = \overline{\phi_i}$ for $1 \leq i \leq n$.

We can extend these ideas in the obvious way  
to the setting of the category $\CP$, 
replacing posets by Priestley spaces,  and requiring the linking maps 
between them to be continuous as well as semi-constant.
Observe that  the Priestley space 
$\Y$ 
in Theorem~\ref{Thm:RevEngVarietyKn} is obtained  
from the sorts of $\D(\A)$ in just the way we have been 
describing above.

We turn now to examples.

\begin{ex}   \label{ex:Kn}
{\rm Take  $
\CV_n=\ISP(\K_0, \ldots ,\K_n)$  	 
and consider the algebra $\K_n$.
Up to isomorphism, (the underlying poset of) 
$\fnt{HU}(\K_n) = \CCD(\fnt{U}(\K_n),\two)$
is obtained as the doubling of the 
(restricted) linear sum 
$\{ \mbf_0\} \oplus \{\mbf_1\}  \oplus \dots \oplus \{\mbf_n\}$,
and which, suggestively, we label as in 
Fig.~\ref{fig:exKn}.  
This is exactly what we obtain from
Theorem~\ref{Thm:RevEngVarietyKn} if we identify 
$\mbt_i$ and $\mbf_i$ with the characteristic functions
of their up-sets with respect to $\leq_k$ on $K_n$.  
(Note that  $\CV_n(\K_n,\K_m) = \{ h_{n,m}\}$ for $0 \leq m \leq n$, so that
$\D(\K_n)$ consists of $n+1$ singletons.)  
Example~\ref{ex:Ln} provides a complementary discussion  of this 
example in terms of our 
product  representation.  
There we shall consider the representation of $\K_n$ and not 
just of $\fnt{U}(\K_n)$.}
\end{ex}

\begin{figure}[!t]
\begin{center}
\begin{tikzpicture}

\li{(5,0)--(5,1)--(6,0)--(6,1)--(5,0)}
\li{(5,3)--(5,4)--(6,3)--(6,4)--(5,3)}
\dotli{(5,1)--(5,2)--(6,1)--(6,2)--(5,1)}
\dotli{(5,2)--(5,3)--(6,2)--(6,3)--(5,2)}
\epo{5,0}
\epo{5,1}
\epo{5,3}
\epo{5,4}
\epo{6,0}
\epo{6,1}
\epo{6,3}
\epo{6,4}

\node at (4.2,0) {$\mbf_0$};
\node at (4.2,1) {$\mbf_1$};
\node at (4.4,3) {$\mbf_{n-1}$};
\node at (4.2,4) {$\mbf_n$};
\node at (6.6,0) {$\mbt_0$};
\node at (6.6,1) {$\mbt_1$};
\node at (6.8,3) {$\mbt_{n-1}$};
\node at (6.6,4) {$\mbt_n$};

\end{tikzpicture}
\end{center}
\caption{The Priestley dual of $\fnt{U}(\K_n)$ \label{fig:exKn}}
\end{figure}

\begin{ex}   
{\bf  (Priestley duals of reducts of free algebras in $\CV_n$)}  
We recall from Section~\ref{sec:multisorted}
 the fundamental fact that, in a natural duality for a class 
$\CA =\ISP(\CM)$ based on an alter ego $\CMT$, 
the free algebra $\Free_{\CA}(S)$ on a non-empty set $S$ of 
free generators is such that $\D(\Free_{\CA}(S)) = \CMT^s$  
(up to isomorphism in the topological quasivariety 
$\CX=\IScP(\CMT)$).

Let us apply Theorem~\ref{Thm:RevEngVarietyKn}  first to identify 
$\fnt{HU}(\Free_{\CV_n}(1))$ as a poset 
(its topology is discrete and plays no role). 
The required poset is obtained by applying doubling  to   
$K_0 \oplus_{h_{1,0}} K_{1} \oplus _{h_{2,1}} \dots 
\oplus_{h_{n-1,n-2}} K_{n-1} \oplus_{h_{n,n-1}}   K_n$. 
Here $K_m$ is equipped with the partial order $S_{m,m}$.  
With respect to this order it is
the disjoint union of an antichain with $3m$ elements and 
$\two^2$, where $\two$ denotes the two-element chain.   
Figure~\ref{fig:K1K0g1} 
shows the restricted linear sum 
$K_0 \oplus_{h_{1,0}} K_1$ 
and Fig.~\ref{fig:SEVEN-free-on-one} shows its doubling,
$\fnt{HU}(\Free_{\CV_1}(1))$.
In the figure, points shown by circles belong to level~$0$ and those by squares belong to level~$1$.

\begin{figure}[!	b]
\begin{center}
\begin{tikzpicture}
\path (-1,0) node(a) [rectangle,draw, minimum size=1.5pt, inner sep=3pt] {$\scriptstyle{\mbf_1}$}
			(0,1) node(b) [rectangle,draw, minimum size=1.5pt, inner sep=3pt] {$\scriptstyle{\top_{\!\!1}}$}
			(1,0) node(c) [rectangle,draw, minimum size=1.5pt, inner sep=3pt] {$\scriptstyle{\mbt_1}$}
			(0,-1) node(d) [rectangle,draw, minimum size=1.5pt, inner sep=3pt] {$\scriptstyle{\top_{\!\!2}}$}
			(3,0) node(e) [rectangle,draw, minimum size=1.5pt, inner sep=3pt] {$\scriptstyle{\mbf_0}$}
			(4,0) node(f) [rectangle,draw, minimum size=1.5pt, inner sep=3pt] {$\scriptstyle{\top_{\!\!0}}$}
			(5,0) node(g) [rectangle,draw, minimum size=1.5pt, inner sep=3pt] {$\scriptstyle{\mbt_0}$}
			(7,0) node(h) {$(K_1,S_{1,1})$}
		
			(4,-3) node(i) [circle,draw,inner sep=1.5pt] {$\scriptstyle{\top_{\!\!0}}$}
			(3,-4) node(j) [circle,draw,inner sep=1.5pt] {$\scriptstyle{\mbf_0}$}
			(5,-4) node(k) [circle,draw, inner sep=1.5pt] {$\scriptstyle{\mbt_0}$}
			(4,-5) node(l) [circle,draw,inner sep=1.5pt] {$\scriptstyle{\top_{\!\!1}}$}
			(7,-4) node(m) {$(K_0,S_{0,0})$}
			(6,-2.5) node(n) {$h_{1,0}$};

	\draw (a)--(b)--(c)--(d)--(a);
	\draw (i)--(j)--(l)--(k)--(i);
	\draw[loosely dashed] (-2,-2)--(6,-2);
	\path[-latex] (3,-0.4) edge node {} (3,-3.5);
	\path[-latex] (4,-0.4) edge node[above] {} (4,-2.5);
	\path[-latex] (5,-0.4) edge node[above] {} (5,-3.5);
	\path[-latex] (1,-0.4) edge[bend right=35] node[above] {} (3.5,-4.9); 
	\path[-latex] (0,-1.4) edge[bend right] node[above] {} (3.5,-5); 
	\path[-latex] (-1,-0.4) edge[bend right] node[above] {} (3.5,-5.1); 
	\path (-0.5,1) edge[bend right=40] node {} (-1.7,-0.5);
	\path (-1.7,-0.5) edge[bend right=25] node {} (0,-4);
	\path[-latex] (0,-4) edge[bend right=15] node {} (3.5,-5.2);
\end{tikzpicture}
\caption{The restricted linear sum 
$K_0 \oplus_{h_{1,0}} K_1$   
\label{fig:K1K0g1}}
\end{center}
\end{figure}
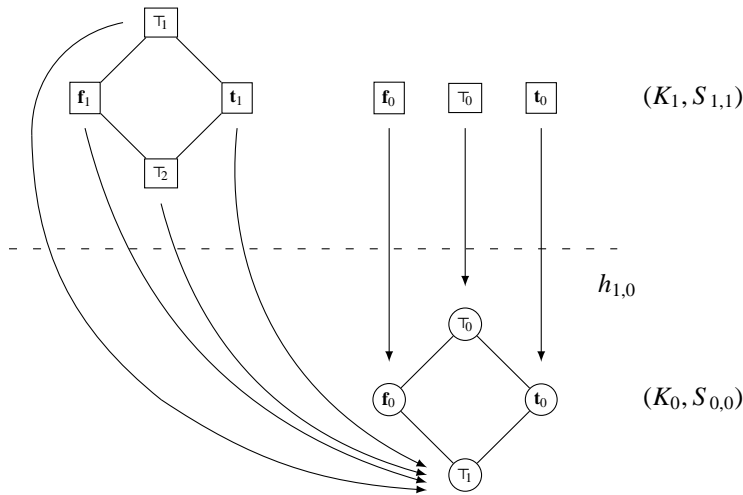

\begin{figure}[ht]
\begin{center}
\begin{tikzpicture}[scale=0.7]
\li{(-2,3)--(-3,4)--(-2,5)--(-1,4)--(-2,3)}
\li{(8,3)--(7,4)--(8,5)--(9,4)--(8,3)}
\li{(0,-1)--(-1,0)--(0,1)--(1,0)--(0,-1)}
\li{(5,0)--(6,1)--(7,0)--(6,-1)--(5,0)}

\draw (0,-1)--(-2,3)--(6,-1)--(8,3)--(0,-1);
\draw (-1,0)--(0,4)--(7,0);
\draw (0,1)--(0.75,4)--(6,1);
\draw (1,0)--(2,4)--(5,0);
\draw (5,0)--(4,4)--(1,0);
\draw (6,1)--(5.25,4)--(0,1);
\draw (7,0)--(6,4)--(-1,0);

\pdtwol{-3,4} 
\pdtwol{-2,5} 
\pdtwol{-1,4} 
\pdtwol{-2,3} 
\pdtwol{0,4} 
\pdtwol{0.75,4}
\pdtwol{2,4} 
\pdtwor{7,4} 
\pdtwor{8,5} 
\pdtwor{9,4} 
\pdtwor{8,3} 
\pdtwor{4,4} 
\pdtwor{5.25,4} 
\pdtwor{6,4} 

\pdonel{-1,0} 
\pdonel{0,1} 
\pdonel{1,0} 
\pdonel{0,-1} 
\pdoner{5,0} 
\pdoner{6,1} 
\pdoner{7,0} 
\pdoner{6,-1} 
\end{tikzpicture}
\end{center}
\caption{The Priestley dual space $\fnt{HU}(\Free_{\CV_1}(1))$}
\label{fig:SEVEN-free-on-one}
\end{figure}

Now let us describe  $\fnt{HU}(\Free_{\CV_n}(k))$, where  
$k$ is finite  (we consider the infinite case below).
The critical point is that $\D(\Free_{\CV_n}(k))$ may be identified 
with $\CMT_n^k$, with the power being calculated  `by  sorts'. 
 Once this is done, the translation to the Priestley dual 
$\fnt{HU}(\Free_{\CV_n}(k))$ proceeds as described in
Theorem~\ref{Thm:RevEngVarietyKn}.  
The $m^{\text{th}}$-layer  of $\fnt{HU}(\Free_{\CV_n}(k))$ is $K_m^k$. 
This can be obtained by induction on~$k$.
The linking map from  $K_m^k \to K_{m-1}^k$ is the $k$-fold
product map $h_{m,m-1} \times \dots \times h_{m,m-1}$.  
We can then describe $\fnt{HU}(\Free_{\CV_n}(k))$, using doubling, 
in the same way as for the case $k=1$.  
\end{ex} 

Our next task is to reveal how to recover $\fnt{U}(\A)$ from 
$\fnt{HU}(\A)$, for $\A \in \CV_n$, taking advantage of the layered 
structure of this Priestley dual space.  
For simplicity we shall first confine our remarks to the case $n=1$
and to finite~$\A$,  so that we are dealing with posets with two layers.  
We need to describe the  up-sets of a poset of the form 
$Q_0 \oplus_{\psi} Q_1$, obtained by doubling from a poset 
$P_0 \oplus_\phi P_1$, where $\phi\colon P_1 \to P_0$ is 
semi-constant.

First consider $\mathcal U(P)$, the family of up-sets of 
$P$, where $P=P_0 \oplus_\phi P_1$.  
Every set in $\mathcal U(P)$
takes the form
${(V_0 \cup \phi^{-1}(V_0)) \cup V_1}$,
where $V_i$ is an up-set in $P_i$ (for $i = 0,1$)  
and, since $\phi$ is semi-constant, we can choose $V_0$ and $V_1$ 
such that $ \phi^{-1}(V_0)\cap V_1 = \emptyset$;  
distinct pairs $(V_0,V_1)$ give rise to distinct up-sets of~$P$.  
Describing $\mathcal{U}(P)$ in full can be a complicated task. 
We note however  that we can easily get crude estimates
for the cardinality of $\mathcal{U}(P)$ when $P_0$
and $P_1$ are finite. We have 
\[
  |\mathcal{U}(P_0)| + |\mathcal{U}(P_1)| -1 \leq |\mathcal{U}(P)| \
\leq |\mathcal{U}(P_0) | \times |\mathcal{U}(P_1)|.
\]
The upper and lower bounds 
 come from consideration of, 
respectively,  the linear sum $P_0 \oplus P_1$ and the disjoint 
union $P_0 \du P_1$.  

\begin{ex}  \label{ex:free-on-1-V1}\quad 
{\bf  (The free algebras $\Free_{\CV_n}(1)$)} 
We first take  $n=1$. 
For $i=0,1$, we 
consider $K_i$ equipped with the partial order $S_{i,i}$. 
Each is the Priestley dual of 
a member $\Lalg_i$ of $\CCD$, for  $i=0,1$.  
By elementary Priestley duality, $\Lalg_0$ is  the linear sum  
$\boldsymbol 1 \oplus \two^2 \oplus \boldsymbol 1$, that is, 
a four-element Boolean lattice with new bottom and top elements 
adjoined.  
 The lattice $\Lalg_1$ equals  
 $\two^3 \times \Lalg_0$. 
 Then $\fnt{U}(\Free_{\CV_1}(1))$ is a $\CCD$-sublattice of $\Lalg_0^2
\times \Lalg_1^2$.

By calculating the number of up-sets of its Priestley dual, as shown in
Fig.~\ref{fig:SEVEN-free-on-one},
 we obtain
$|\Free_{\CV_1}(1)| = 5879$. 
We can compare this  value with our crude upper and lower bounds;
$
|\Lalg_0^2 |+ | \Lalg_1^2|-1 = 2339
$
and $|\Lalg_0^2 \times \Lalg_1^2 | = 82944$.
We also draw attention to the difference between the size of $\Free_{\CV_1}(1)$ and  
that of $\Free_{\CV_0}(1)$, which is~$36$ (the  Priestley dual is $\two^2 \du \two^2$).  
Two factors are at work here:  the passage to a strictly larger variety
and, perhaps more significantly, the weakening of the relation of
equivalence between bilattice terms  as a result of loss of distributivity.  

We can quickly see how 
$\fnt{HU}(\Free_{\CV_n}(1))$
is obtained order-theoretically from  
$\fnt{HU}(\Free_{\CV_{n-1}}(1))$, 
for $n \geq 1$, by adding a new top layer.  This gives 
$|\Free_{\CV_n}(1)|\geq|\Free_{\CV_{n-1}}(1)|+36(2^6)^{n}$. 
Hence, we can obtain a lower bound for $|\Free_{\CV_n}(1)|$ as follows
 \[
|\Free_{\CV_n}(1)|\geq 36 \frac{(2^6)^{n+1}-1}{2^6-1}\geq \frac12 (2^6)^{n+1}.
\]
\end{ex} 

So far we have looked at  $\fnt{HU}(\A)$ for  $\A $  a finite algebra in $\CV_n$ 
  and investigated in particular  the lattice $\fnt{U}( \Free_{\CV_n}(1))$.
We now make some comments applicable to arbitrary algebras.    
For infinite $\A$ the order components  within each layer are 
clopen in $\fnt{HU}(\A)$ and no significant issues arise in 
passage from the finite to the infinite case. 
Let $\A\in\CV_n$.  
 Associated with $\Y=\fnt{HU}(\A)$ is another 
Priestley space $\Z$ obtained by deleting the order relations 
between the layers.
It is 
a disjoint union of  Priestley spaces $\X_i$ ($0 \leq i \leq n$), 
where  each of these is the disjoint union of a Priestley space 
$\spc{W}_i$ with itself.   
  The distributive lattice $\fnt{K}(\Z)$  is therefore a product 
$\Lalg_0^2 \times \ldots \times \Lalg_n^2$ for certain 
$\Lalg_0, \ldots,\Lalg_n\in \CCD$.  
By basic Priestley duality, $\fnt{U}(\A)$ is a $\CCD$-sublattice 
of $\fnt{K}(\Z)$.  

Consider $\Free_{\CV_n}(\kappa)$, where now $\kappa$ is infinite.
The component layers of  
$\D(\Free_{\CV_n}(\kappa))$ (for $0 \leq m \leq n$) are the Priestley space powers 
$\spc{K}_m^\kappa$,
where $K_m$  carries the discrete topology; 
the linking map from $K_m^\kappa$ to  $K_{m-1}^\kappa$
 is determined by its compositions with the coordinate projections;
 each of these compositions is~$h_{m,m-1}$.

Theorem~\ref{Thm:RevEngVarietyKn}  gives  us access to a 
concrete representation of the $\CCD$-reduct $\fnt{U}(\A)$ for 
$\A \in\CV_n$, but not in a way which encodes the full bilattice 
structure.  
We remedy this omission in Section~\ref{sec:ProdRep}
by presenting our 
product representation.  
This will rely on showing how $\fnt{U}(\A)$ regarded as a  
sublattice of $\Lalg_0^2 \times \ldots \times \Lalg_n^2$ 
supports operations  $\wedge$,  $\vee$ and $\neg$
(the ones suppressed by $\fnt{U}$).  We thereby arrive at an algebra
isomorphic to the original bilattice~$\A$.

We were led to consider 
$\Lalg_0^2 \times \ldots \times \Lalg_n^2$ 
when we deleted the ordering between successive layers of the 
Priestley dual space $\fnt{HU}(\A)$.  
This ordering is derived from (the pointwise 
lifting to $\D(\A)$ of) the maps $g_i$, for $1 \leq i \leq n$ (see
Theorem~\ref{Thm:CharDualVn} 
as it applies to $\X = \D(\A)$).
The following elementary lemma reveals  the lattice-theoretic 
content of condition (iii)  in that theorem.

\begin{lem} \label{lem:Bool-sub} 
Let\,  $\Lalg, \M \in \CCD$ and let $\X= \fnt{H}(\Lalg)$ and 
$\Y=\fnt{H}(\M)$ be the Priestley dual spaces of\, $\Lalg$ and\, 
$\M$. 
 Let $f\in \CCD(\Lalg,\M)$ and let $\phi  = \fnt{H}(f)$ be the 
dual map.
Then the following statements are equivalent:
\begin{newlist}
\item[{\upshape (1)}]
$\phi \colon \Y \to \X$ is semi-constant;   
\item[{\upshape (2)}]
each element of  $f(\Lalg)$ has a complement in\, $\M$. 
\end{newlist}
\end{lem}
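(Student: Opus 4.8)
The plan is to work entirely within the Priestley-duality dictionary, identifying $\Lalg$ with the lattice of clopen up-sets of $\X = \fnt{H}(\Lalg)$ and $\M$ with the lattice of clopen up-sets of $\Y = \fnt{H}(\M)$. Under these identifications the homomorphism $f$ acts by taking preimages, that is, $f(U) = \phi^{-1}(U)$ for each clopen up-set $U$ of $\X$, where $\phi = \fnt{H}(f)\colon \Y \to \X$ is the (continuous, order-preserving) dual map. Two further translations underpin the whole argument, and I would record them at the outset. First, a clopen up-set $V$ of $\Y$ has a complement in $\M$ precisely when its set-theoretic complement $Y \setminus V$ is itself a clopen up-set, in which case $Y \setminus V$ is that complement. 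Second, since the order components of $\Y$ are the connected components of the comparability relation, $\phi$ is semi-constant if and only if $\phi(y) = \phi(y')$ whenever $y \le y'$ in $\Y$. With these reformulations in hand I would prove the two implications separately.

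For $(1)\Rightarrow(2)$, I would take a typical element $V = f(U) = \phi^{-1}(U)$ of $f(\Lalg)$, with $U$ a clopen up-set of $\X$, and show that $Y \setminus V = \phi^{-1}(X \setminus U)$ is a clopen up-set. Continuity of $\phi$ makes it clopen; the up-set property is exactly where semi-constancy enters, since if $y \in \phi^{-1}(X \setminus U)$ and $y \le y'$ then $\phi(y') = \phi(y) \notin U$, whence $y' \in \phi^{-1}(X \setminus U)$. By the first reformulation $V$ then has a complement in $\M$.

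For $(2)\Rightarrow(1)$, I would argue by contradiction. Suppose $y \le y'$ in $\Y$ with $\phi(y) \ne \phi(y')$; since $\phi$ is order-preserving this forces $\phi(y') \nleqslant \phi(y)$, so the Priestley separation property yields a clopen up-set $U$ of $\X$ with $\phi(y') \in U$ and $\phi(y) \notin U$. Setting $V = f(U) = \phi^{-1}(U)$, hypothesis (2) provides a complement $W = Y \setminus V$ that is a clopen up-set. Now $\phi(y) \notin U$ gives $y \in W$, and since $W$ is an up-set with $y \le y'$ we obtain $y' \in W$; but $\phi(y') \in U$ means $y' \in V$, contradicting $V \cap W = \emptyset$.

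The argument is short, and the only real care required is in orienting the duality dictionary correctly, in particular that $f$ corresponds to preimage under $\phi$ and that a lattice complement of a clopen up-set is its set-complement precisely when the latter is again an up-set. The one substantive step, which I would present most carefully, is the verification in $(1)\Rightarrow(2)$ that $\phi^{-1}(X \setminus U)$ is an up-set, since this is the single place where the hypothesis of semi-constancy does genuine work; everything else is routine bookkeeping with the Priestley separation axiom.
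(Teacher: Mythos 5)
Your proposal is correct and follows essentially the same route as the paper's proof: both work in the Priestley dictionary, translate condition (2) into the statement that $\phi^{-1}(U)$ is a down-set (equivalently, that its set complement is a clopen up-set), use semi-constancy to establish this, and for the converse use the Priestley separation axiom on a pair $p<q$ with $\phi(p)\ne\phi(q)$. The only cosmetic difference is that you prove $(1)\Rightarrow(2)$ directly where the paper argues by contrapositive.
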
 
\begin{proof}  
It will be convenient to  identify $\Lalg$ and $\M$ with 
the clopen up-sets of~$\X$ and~$\Y$, respectively, and to regard 
$f$ as being given by  $f(V) = \phi^{-1}(V)$, for each clopen up-set 
$V$ of $\Lalg$.  
Re-stated  in these terms,  (2) becomes the statement
that $\phi^{-1}(V)$ is a down-set for each clopen up-set~$V$ 
in~$Y$.
Assume that this is false for some $V$.
 Then we would be able to find 
$p$ and~$q$ in $Y$ with $p < q$, $p \notin \phi^{-1}(V)$ and
 $q \in \phi^{-1}(V)$.  
But this is incompatible with~(1). 

Conversely, assume~(1) fails. 
Then there exist~$p$ and~$q$ in~$Y$ with $p < q$ but 
$\phi(p) \ngeqslant \phi(q)$; here we have used the fact
that $\phi$ is order-preserving.  
Since~$\X$ is a Priestley space, there exists 
a clopen up-set~$W$ in~$\X$ with $\phi(q) \in W$ and 
$\phi(p) \notin W$. 
 Then $V = \phi^{-1}(W)$ is a clopen up-set in~$\Y$.  
But consideration 
of~$p$ and~$q$ shows that~$V$ cannot be a down-set.  
\end{proof} 
  
In the lemma the idea of levels of default seems very distant.  
Interestingly,  we shall see shortly that 
 condition~(2) emerges in a  natural way in the context of reasoning 
with defaults and helps to motivate the construction underlying 
our 
product representation.

\section{The Product Representation Theorem} 
\label{sec:ProdRep}
As we stressed at the outset,  our objective is to obtain a 
product-style representation for the members of $\CV_n$.
Our analysis of $\CCD$ reducts in the preceding section gives 
pointers as to how this might work, but did not give the full-blown 
representation we seek, because we did not encompass the 
operations suppressed by the forgetful functor~$\fnt{U}$.

Before introducing the formalism we shall employ,
we give some  intuition behind the construction of a bilattice 
from  a (not necessarily distributive) lattice 
(see \cite{BD13} for background on the construction).   
Consider a  situation in which the lattice $\Lalg$ arises as a  lattice 
of possible evidence (for example collected for a trial).  
Then, given a certain statement $s$, we assign to it a pair $(a,b)\in\Lalg^2$, where $a$ denotes the evidence in favour of $s$ 
(the \emph{positive evidence})
and $b$ the evidence against $s$ (the \emph{negative evidence}). 
Clearly we could have non-empty intersection 
between the positive and negative evidence for~$s$,
depending on the interpretation, and we may also have 
evidence that is neither for $s$ nor against it. 
The product $\Lalg\times \Lalg$ admits
two orderings.  Let $s = (a,b)$ and $s'=(b ,b')$ be elements of 
$\Lalg \times \Lalg$. In the knowledge order,  
$s \leq_k s'$ if $a\leq b $ and $b\leq b'$ in $\Lalg$, meaning that  
both the positive and negative evidence for $s$  is no greater than 
the corresponding evidence for~$s'$.
In the truth order,  $s\leq_ts'$  if $a\leq b $ and $b'\leq b$, 
meaning that the positive evidence for~$s$ is no greater than 
that for~$s'$, and the opposite holds for the negative evidence.   
There is also a natural interpretation of negation in this  set-up, 
given by $\neg(a,b)=(b,a)$, so that  whatever evidence is in favour 
of a statement is against its negation and vice versa.
Thus  we obtain a bilattice structure $\Lalg\odot \Lalg$ whose 
universe is $L\times L$.

It is natural to consider evidence being accumulated in an iterative 
fashion, with  pre-existing evidence taken into account by default, 
and  to be seen as  having priority. 
Such earlier evidence, encoded  by a lattice $\Lalg'$, might have 
come from  statistics, previous trials, or from other sources.
So new information, captured by a 
lattice $\Lalg$,  might be of a kind different from that  encoded by $\Lalg'$ 
but should be 
assumed to be  somehow connected to it.  
The connection between $\Lalg$ and $\Lalg'$ can be modelled by 
 a homomorphism from $\Lalg$ into $\Lalg'$, 
meaning that the evidence encoded in $a\in\Lalg$ has precedence 
over  any information that is contained in $h(a)$. 
For example, if $a$ and $b\in\Lalg$ represent the 
positive and negative evidence for a certain statement, then 
any default information (positive or negative) that is 
less informative than $h(a\vee b)$ does not add truly new 
information. 
Therefore, if the evidence about a statement is encoded by 
$(a,b)\in\Lalg\times\Lalg$ and the default information  that we get 
about the same statement is encoded by 
$(c,d)\in\Lalg'\times\Lalg'$,
then $h(a\vee b)\leq c$ and $h(a\vee b)\leq d$.
In this fashion the evidence {\em overrides} any positive or negative 
default information that we have about certain statements.

We shall now 
present our 
product representation.
We begin by setting up an equivalence between 
$\IScP(\twiddle{\CM_n})$, 
as described in Theorem~\ref{Thm:CharDualVn},  and another 
category related to $\CV_n$.  
This  equivalence implicitly subsumes parts of the `doubling' 
framework presented in Section~\ref{sec:varKn}
and provides a convenient formalism for developing our  theory 
in an algebraic setting.  

For $n\geq 0$, we define an \defn{$n$-default sequence} to be a 
sequence
\[
\Seq=\Lalg_0\overset{h_1}{\longrightarrow}\Lalg_1\overset{h_2}{\longrightarrow}\cdots\overset{h_{n-1}}{\longrightarrow}\Lalg_{n-1}\overset{h_n}{\longrightarrow}\Lalg_n,
\]
where 
$\Lalg_{i}\in \CCD$, for $0 \le i \le n$, 
and $h_{j} \in \CCD(\Lalg_{j-1}, \Lalg_j)$, for $1 \leq j\leq n$,  
are such that each $c$ in  $h_j(\Lalg_{j-1})$ has a  complement $c^*$  in~$\Lalg_j$.   
  The $n$-default sequences  support  a natural 
categorical structure.  
More precisely: an \defn{$n$-default morphism} $\boldsymbol{f}$
from $\Seq$ to $\Seq'$ is an $(n+1)$-tuple of $\CCD$-morphisms 
 $(f_0,\ldots,f_n)$ such that the diagram in 
Fig.~\ref{Fig:Sequences} commutes. 
It is easy to see that the class $\cat{DS}_n$ of $n$-default sequences with $n$-default morphisms is indeed a category.

 \begin{figure}  [ht]
\begin{center}
\begin{tikzpicture} 
[auto,
 text depth=0.25ex,
 move up/.style=   {transform canvas={yshift=1.9pt}},
 move down/.style= {transform canvas={yshift=-1.9pt}},
 move left/.style= {transform canvas={xshift=-2.5pt}},
 move right/.style={transform canvas={xshift=2.5pt}}] 
\matrix[row sep= 1cm, column sep= 1cm]
{ 
\node (S0) {$\Lalg_0$}; & \node (S1) {$\Lalg_1$}; &\node (M){$\ \cdots\ $}; &\node (Sn1) {$\Lalg_{n-1}$}; & \node (Sn) {$\Lalg_{n}$};\\ 
\node (T0) {$\Lalg'_0$}; & \node (T1) {$\Lalg'_1$}; &\node (N) {$\ \cdots\ $}; &\node (Tn1) {$\Lalg'_{n-1}$}; & \node (Tn) {$\Lalg'_{n}$};\\ 
};
\draw [->] (S0) to node {$h_0$}(S1);
\draw [->] (S1) to node {$h_1$}(M);
\draw [->] (M) to node {$h_{n-1}$}(Sn1);
\draw [->] (Sn1) to node {$h_n$}(Sn);
\draw [->] (T0) to node [swap] {$h'_0$}(T1);
\draw [->] (T1) to node [swap] {$h'_1$}(N);
\draw [->] (N) to node [swap] {$h'_{n-1}$}(Tn1);
\draw [->] (Tn1) to node [swap] {$h'_n$}(Tn);
\draw [->] (S0) to node [swap]{$f_0$}(T0);
\draw [->] (S1) to node [swap]{$f_1$}(T1);
\draw [->] (Sn1) to node [swap]{$f_{n-1}$}(Tn1);
\draw [->] (Sn) to node [swap]{$f_n$}(Tn);
\end{tikzpicture}
\end{center}
\caption{$n$-default morphism}\label{Fig:Sequences}
\end{figure} 

By Theorem~\ref{Thm:CharDualVn}, a simple extension of the functors $\fnt{H}$ and $\fnt{K}$ 
determines a dual equivalence  between $\IScP(\twiddle{\CM_n})$ 
and $\cat{DS}_n$.
This is set up by the functors $\fnt{H}_n\colon \cat{DS}_n\to \IScP(\twiddle{\CM_n})$ and $\fnt{K_n}
\colon  \IScP(\twiddle{\CM_n})\to\cat{DS}_n$.The functor $\fnt{H}_n$ is 
defined as follows:

\noindent 
on objects:
\quad 
 if $
\smash 
\Seq=\Lalg_0\overset{h_1}{\longrightarrow}\Lalg_1\overset{h_2}{\longrightarrow}\cdots\overset{h_{n-1}}{\longrightarrow}\Lalg_{n-1}\overset{h_n}{\longrightarrow}\Lalg_n,$ and $\fnt{H}(\Lalg_i)=(X_i;\leq_i,\Tp_i)$ for 
$i\in\{0,\ldots,n\}$ then
\[
\fnt{H}_n(\Seq)=(X_0\du \cdots \du X_n;\leq_0,\ldots,\leq_n,\fnt{H}(h_1),\ldots,\fnt{H}(h_n),\Tp),
\] 
where $\Tp$ is the disjoint union topology; 

\noindent 
on morphisms:  \quad  
$\fnt{H}_n(f_0,\ldots,f_n)=\fnt{H}(f_0)\du\cdots\du\fnt{H}(f_n)$.

\noindent In the other direction,  we define $\fnt{K}_n$ as follows:  

\noindent 
on objects: \quad 
if $\X=(X_0\du \cdots \du X_n;\leq_0,\ldots,\leq_n,g_1,\ldots,g_n,\Tp)$ and $\Lalg_i=\fnt{K}(X_i;\leq_i,\Tp_i)$ for 
$i\in\{0,\ldots,n\}$ then
\[\fnt{K}_n(\X)=\Lalg_0\overset{\fnt{K}(g_1)}{\longrightarrow}
\Lalg_1  \overset{\fnt{K}(g_2)}{\longrightarrow} \cdots  \overset{\fnt{K}(g_{n-1})}{\longrightarrow} \Lalg_{n-1}
\overset{\fnt{K}(g_{n})}{\longrightarrow}\Lalg_n;
\]
on morphisms:   \quad 
$ \fnt{K}_n(u)= (\fnt{K}(u\restrict_{X_0}),\ldots,\fnt{K}(u\restrict_{X_n}))$, where $u\restrict_{X_i}\colon X_i\to \K_i$ is the restriction of $u$ to 
$X_i$ for $i\in\{0,\ldots,n\}$.

\begin{figure}  [ht]
\begin{center}
\begin{tikzpicture} 
[auto,
 text depth=0.25ex,
 move up/.style=   {transform canvas={yshift=1.9pt}},
 move down/.style= {transform canvas={yshift=-1.9pt}},
 move left/.style= {transform canvas={xshift=-2.5pt}},
 move right/.style={transform canvas={xshift=2.5pt}}] 
\matrix[row sep= 1cm, column sep= 1.7cm]
{ 
\node (DB) {$
\CV_n = \ISP(\CM_n) $};
; & \node (X) {$\IScP(\twiddle{\CM_n})$}; & \node (DS) {$\cat{DS}_n$};\\ 
};
\draw  [->, move up] (DB) to node  [yshift=-2pt] {$\D$}(X);
\draw [<-,move down] (DB) to node [swap]  {$\E$}(X);
\draw  [->,move up] (X) to node  [yshift=-2pt] {$\fnt{K}_n$}(DS);
\draw [<-,move down] (X) to node [swap]  {$\fnt{H}_n$}(DS);
\end{tikzpicture}
\end{center}
\vspace*{-.2cm}
\caption{Equivalence between $\CV_n$ and $\cat{DS}_n$}\label{Equivalences}
\end{figure}

The discussion above 
paves the way to 
our construction of a 
\defn{product default bilattice}. 
Given an $n$-default sequence 
$\Seq=
\Lalg_0\overset{h_1}{\longrightarrow}\cdots\overset{h_n}{\longrightarrow}\Lalg_n$,
we shall define a default bilattice 
$\Seq\odot\Seq$ isomorphic in $\CV_n$ to 
$\fnt{E}\circ\fnt{H}_n(\Seq)$  whose universe $A$ is   
included in $\Lalg_0^2\times\cdots\times\Lalg_n^2$.
If $a\in\Lalg_0^2\times\cdots\times\Lalg_n^2$, then 
$(a_{i,\mbt},a_{i,\mbf})\in \Lalg_i^2$ denotes the pair formed 
from  the $(2i+1)$- and $(2i+2)$-coordinates of $a$.  
We define
\allowdisplaybreaks
\[\
A=\{\,a\in\Lalg_0^2\times\cdots\times\Lalg_n^2\mid h_i(a_{i-1,\mbt}\vee a_{i-1,\mbf})\leq a_{i,\mbt},a_{i,\mbf}\mbox{ for }1\leq i\leq n\,\}.
\]
We  
define $\iota\colon A\to \fnt{E}\circ \fnt{H}_{n}(\Seq)$ recursively 
in the following way.  Fix $a \in A$ and let $z\in\fnt{H}(\Lalg_{i}) $.

\noindent {\bf Case 1}:  
Assume   that  either $i=0$ or that $i > 0$ and  $\iota(a)(z\circ h_i)=\top_{\!\!i}$.  Then we define
\[
\iota(a)(z)=\begin{cases}
\top_{\!\!i}&\mbox{ if }z(a_{i,\mbt})=z(a_{i,\mbf})=1,\\
\mbt_i&\mbox{ if }z(a_{i,\mbt})=1\mbox{ and }z(a_{i,\mbf})=0,\\
\mbf_i&\mbox{ if }z(a_{i,\mbt})=0\mbox{ and }z(a_{i,\mbf})=1,\\
\top_{\!\!i+1}&\mbox{ if }z(a_{i,\mbt})=z(a_{i,\mbf})=0.
\end{cases}
\]

\noindent {\bf Case 2:}  
Assume that $i > 0$ and $\iota(a)(z\circ h_i)\neq \top_{\!\!i}$.  Then we define $\iota(a)(z)=\iota(a)(z\circ h_i)$.

\noindent Since each 
$x\in \fnt{H}(\Lalg_i)$ 
is continuous and order-preserving, 
so is  $\iota(a)$.
It is routine  to check that 
$\iota(a)\circ \fnt{H}(h_i) =h_{i,i-1}\circ\iota(a)$. 
Hence $\iota$ is well defined.  
Furthermore,  $\iota$ is bijective and its inverse is given 
as follows.
Let $f=f_0\du\cdots\du f_n$ belong to 
$\fnt{E}\circ \fnt{H}_n(\Seq)$ and $a_{i,\mbt}$ and $a_{i,\mbf}$ be the unique elements of 
$\Lalg_i$ determined by  the clopen up-sets 
 $f_i^{-1}({\uparrow}_{k}\mbt_i)$ and 
$f_i^{-1}({\uparrow}_{k}\mbf_i)$, respectively.  Then 
$ %\[
\iota^{-1}(f)=
((a_{0,\mbt},a_{0,\mbf}),\ldots,(a_{n,\mbt},a_{n,\mbf}))$.
%\]

We shall now use $\iota$
to define the bilattice 
operations of 
\[
\Seq\odot\Seq=  
\bigl( A; \otimes, \oplus, \land, \lor, \neg, \bot,\top\bigr)
\]
in such a way that 
$\Seq\odot\Seq\cong\fnt{E}\circ\fnt{H}_n(\Seq)$.
This is  done  simply by arranging that,  for each $a,b\in A$,
\[ 
a\star b
=\iota^{-1}(\iota(a)\star\iota(b))\  
\mbox{ for }\star\in\{\vee,\wedge,\oplus,\otimes\}; \quad \neg a = \iota^{-1}(\neg(\iota(a))); \quad  
\top=\iota^{-1}(\top) \mbox{ and }
\bot=\iota^{-1}(\bot).
\]
In what follows we present 
an alternative 
description
of the operations in $\Seq\odot\Seq$ in terms of the coordinates of the elements of $A$ involved and the homomorphisms $h_i$  of
 the sequence $\Seq$. This description is
intrinsic to $\Seq$ and 
does not refer to 
the map $\iota$.  
First we let 
\begin{alignat*}{2}
(a\otimes b )_{0,\mbt}&=
a_{0,\mbt}\wedge b _{0,\mbt},&
(a\otimes b )_{0,\mbf}&=
a_{0,\mbf}\wedge b _{0,\mbf}, \\
(a\oplus b )_{0,\mbt}&=
a_{0,\mbt}\vee b _{0,\mbt},\qquad \quad &
(a\oplus b )_{0,\mbf}&=
a_{0,\mbf}\vee b _{0,\mbf}, \\
(a\wedge b )_{0,\mbt}&=a_{0,\mbt}\wedge b _{0,\mbt},&
(a\wedge b )_{0,\mbf}&=
a_{0,\mbf}
\vee b _{0,\mbf},\\
(a\vee b )_{0,\mbt}&=
a_{0,\mbt}\vee 
b _{0,\mbt},&
(a\vee b )_{0,\mbf}&=
a_{0,\mbf}\wedge b _{0,\mbf}.
\end{alignat*}
Inductively, if we have  
specified  
$(a\star b )_{ i-1}$, 
then in  
the $i^{\text{th}}$
pair of coordinates, 
we take account of the previously encoded information
to obtain  $(a\star b )_i$.
We recall that  the definition of a default sequence assumes that, for 
$i > 0$,   the homomorphism 
$h_i\colon \Lalg_{i-1} \to \Lalg_i$ 
is such that each element~$c$ of 
$h_i(\Lalg_{i-1})$ has a complement~$c^*$ in $\Lalg_i$.
We let 
\begin{align*}
(a\otimes b )_{i,\mbt}&=
a_{i,\mbt}\wedge b _{i,\mbt},\\
(a\otimes b )_{i,\mbf}&=
a_{i,\mbf}\wedge b _{i,\mbf},
\\
(a\oplus b )_{i,\mbt}&=
a_{i,\mbt}\vee b _{i,\mbt},\\
(a\oplus b )_{i,\mbf}&=
a_{i,\mbf}\vee b _{i,\mbf},
\\[.8ex]
(a\wedge b )_{i,\mbt}&=
(a_{i,\mbt}\wedge b _{i,\mbt})\vee h_i((a\wedge b )_{i-1,\mbt}\vee (a\wedge b )_{i-1,\mbf}),
\\
(a\wedge b )_{i,\mbf}&=
(a_{i,\mbf}\wedge h_i(a_{i-1,\mbt})^*)\vee (b _{i,\mbf}\wedge h_i(b _{i-1,\mbt})^*)
\vee h_i((a\wedge b )_{i-1,\mbt}\vee (a\wedge b )_{i-1,\mbf}),
\\
(a\vee b )_{i,\mbt}&=
(a_{i,\mbt}\wedge h_i(a_{i-1,\mbf})^*)\vee (b _{i,\mbt}\wedge h_i(b _{i-1,\mbf})^*)  
\vee h_i((a\vee b )_{i-1,\mbt}\vee (a\vee b )_{i-1,\mbf}),
\\
(a\vee b )_{i,\mbf}&=
(a_{i,\mbf}\wedge b _{i,\mbf})\vee h_i((a\vee b )_{i-1,\mbt}\vee (a\vee b )_{i-1,\mbf}).
\end{align*}
In terms of coordinates, the negation operation is given 
 by
\[
(\neg a)_{i,\mbt}=a_{i,\mbf},\quad
(\neg a)_{i,\mbf}=a_{i,\mbt}  \qquad  \text{for  
} i\in\{0,\ldots,n\};
\]
and the constants  by
$\top = (1,\ldots,1)$ and $\bot=(0,\ldots,0)$.

We demonstrate for $\oplus$ and $\vee$ 
 that our alternative specifications fit with the  definitions  in terms of $\iota$
that we gave initially.
The remaining operations are handled similarly.
Given  $a,b \in A$ and $x_0\in\fnt{H}(\Lalg_{0})$,  
\begin{align*}
x_0(\iota^{-1}(\iota(a)\oplus \iota(b ))_{0,\mbt})=1
&\Longleftrightarrow    (\iota(a)\oplus \iota(b ))(x_0)\in\{\mbt_0,\top_{\!\!0}\}\\
&\Longleftrightarrow    \iota(a)(x_0)\in\{\mbt_0,\top_{\!\!0}\}\mbox{ or }\iota(b )(x_0)\in\{\mbt_0,\top_{\!\!0}\}\\
&\Longleftrightarrow    x_0(a_{0,\mbt})=1\mbox{ or }x_0(b _{0,\mbt})=1\\
&\Longleftrightarrow   x_0(a_{0,\mbt}\vee b _{0,\mbt})=1;\\
\intertext{likewise one can show }
x_0(\iota^{-1}(\iota(a)\oplus \iota(b ))_{0,\mbf})=1&\Longleftrightarrow    (\iota(a)\oplus \iota(b ))(x_0)\in\{\mbf_0,\top_{\!\!0}\}
\Longleftrightarrow   x_0(a_{0,\mbf}\vee b _{0,\mbf})=1; \\ 
x_0(\iota^{-1}(\iota(a)\vee \iota(b ))_{0,\mbt})=1
&\Longleftrightarrow    (\iota(a)\vee \iota(b ))(x_0)\in\{\mbt_0,\top_{\!\!0}\} 
\Longleftrightarrow   x_0(a_{0,\mbt}\vee b _{0,\mbt})=1;
\\ 
x_0(\iota^{-1}(\iota(a)\vee \iota(b ))_{0,\mbf})=1
&\Longleftrightarrow    (\iota(a)\vee \iota(b ))(x_0)\in\{\mbf_0,\top_{\!\!0}\}
\Longleftrightarrow   x_0(a_{0,\mbf}\wedge b _{0,\mbf})=1.
\end{align*}
Now consider $i \geq 1$. 
We give the first calculation
in full detail so as to bring out clearly how the 
definition of the universe~$A$ comes into play.  The second 
and subsequent calculations are more abbreviated but involve the same ideas.
Let $x_i\in \fnt{H}(\Lalg_{i})$.  Then  
\begin{align*}
x_i(\iota^{-1}(\iota(a)\oplus \iota(b ))_{i,\mbt})=1
&\Longleftrightarrow   (\iota(a)\oplus \iota(b ))(x_i)\in{\uparrow}_{k}{\mbt_i} 
\\
 &\Longleftrightarrow   \iota(a)(x_i)\in{\uparrow}_{k}{\mbt_i}\mbox{ or }  \iota(b )(x_i)\in{\uparrow}_{k}{\mbt_i}\\
 &\Longleftrightarrow   x_i(a_{i,\mbt})=1\mbox{  or }x_i(b _{i,\mbt})=1
\mbox{ or } \iota(a)(x_i\circ h_i)\neq \top_{\!\!i}\mbox{ or }\iota(b )(x_i\circ h_i)\neq \top_{\!\!i}\\
&\Longleftrightarrow   x_i(a_{i,\mbt})=1\mbox{  or }x_i(b _{i,\mbt})=1
 \mbox{ or } (x_i\circ h_i)(a_{i-1,\mbt})=1\mbox{ or }
(x_i\circ h_i)(a_{i-1,\mbt})=1
\\ &\hspace*{5.5cm} 
\mbox{or } (x_i\circ h_i)(b _{i-1,\mbt})=1\mbox{ or }
(x_i\circ h_i)(b _{i-1,\mbt})=1\\
&\Longleftrightarrow   x_i(a_{i,\mbt})=1\mbox{  or }x_i(b _{i,\mbt})=1 
\mbox{ or }
 x_i( h_i(a_{i-1,\mbt}\vee a_{i-1,\mbt}))=1
\mbox{  or } x_i( h_i(b _{i-1,\mbt}\vee b _{i-1,\mbt}))=1\\
&\Longleftrightarrow   x_i(a_{i,\mbt}\vee b _{i,\mbt})=1;\\[1.5ex]
x_i(\iota^{-1}(\iota(a)\oplus \iota(b ))_{i,\mbf})=1 
&\Longleftrightarrow   (\iota(a)\oplus \iota(b ))(x_i)\in{\uparrow}_{k}{\mbf_i}\\
&\Longleftrightarrow  
 \iota(a)(x_i)\in{\uparrow}_{k}{\mbf_i}\mbox{ or }  \iota(b )(x_i)\in{\uparrow}_{k}{\mbf_i}\\
&\Longleftrightarrow  
 x_i(a_{i,\mbf})=1\mbox{  or }x_i(b _{i,\mbf})=1
\mbox{ or } \iota(a)(x_i\circ h_i)\neq \top_{\!\!i}
 \mbox{ or }\iota(b )(x_i\circ h_i)\neq \top_{\!\!i}\\
&\Longleftrightarrow   x_i(a_{i,\mbf}\vee b _{i,\mbf})=1. 
\end{align*}
We now consider $\vee$.  
In the last step of the first calculation
we use the fact that $x_i\colon\Lalg_i\to\two$ is a lattice homomorphism
In the second calculation we need  
additionally 
the fact that, for a complemented element $c$ in $\Lalg_i$, we have 
$x_i(c) = 0 $ if and only if $x_i(c^*) = 1$.  We have 
\begin{align*}
x_i( 
\iota^{-1}(\iota(a)\vee \iota(b ))_{i,\mbt})=1 
& \Longleftrightarrow 
(\iota(a)\vee \iota(b ))(x_i)\in{\uparrow}_{k}{\mbt_i}\\
%%%%2
& \Longleftrightarrow   \iota(a)(x_i)\in
\{ \mbt_j\}_{j \leq i} \cup \, \{ \top_{\!\!j}\}_{j \leq i} 
\mbox{ or }
\iota(b )(x_i)\in
\{\mbt_j\}_{j \leq i} 
\cup \, \{\top_{\!\!j}\}_{j\leq i} 
 \mbox{ or }
\iota(a)(x_i),\iota(b )(x_i)\in
\{\mbf_j\}_{j\leq i-1} \\
%%%%3 
&\Longleftrightarrow   \iota(a)(x_i)\in\{\mbt_j\}_{j \leq i-1} \cup\,
\{\top_{\!\!j}\}_{j\leq i-1} 
\mbox{ or }
\iota(b )(x_i)\in
\{\mbt_j \}_{j\leq i-1}  \cup\, \{ \top_{\!\!j}\}_{j \leq i-1} 
\\
&\hspace*{2.4cm} 
\mbox{ or } 
\{\iota(a)(x_i),\iota(b )(x_i)\} \subseteq 
\{ \mbf_j\} _{j\leq i-1} 
\iota(a)(x_i)\in\{\mbt_i,\top_{\!\!i}\}\mbox{ or }\iota(b )(x_i)\in
\{\mbt_i,\top_{\!\!i}\}
\\
 %%%%%%4
 &\Longleftrightarrow   
   \iota(a)(x_i\circ h_i)\in
 \{\mbt_j\}_{j \leq i-1} \cup\, \{\top_{\!\!j}\}_{j \leq i-1}
 \mbox{ or }
  \iota(b )(x_i\circ h_i)\in
  \{\mbt_j\}_{j \leq i-1} \cup \, \{ \top_{\!\!j}\}_{j \leq i -1}
  \\
  & \hspace*{2.4cm} 
\mbox{ or }
 \iota(a)(x_i\circ h_i),\iota(a)(x_i\circ h_i)\in
  \{ \mbf_j\}_{j \leq i-1}
\mbox{ or }
\iota(a)(x_i)\in\{\mbt_i,\top_{\!\!i}\}\mbox{ or }
\iota(b )(x_i)\in\{\mbt_i,\top_{\!\!i}\} \\ 
  & \Longleftrightarrow    x_i(h_i((a\vee b )_{i-1,\mbt}))=1\mbox{ or }x_i(h_i((a\vee b )_{i-1,\mbf}))=1 \\
&  \hspace*{2.4cm}
\mbox{ or } 
  \bigl(x_i(a_{i,\mbt})=1\mbox{ and }x_i(h_i(a_{i-1,\mbf}))=0\bigr)
\mbox{ or }
\bigl(x_i(b _{i,\mbt})=1\mbox{ and }x_i(h_i(b _{i-1,\mbf}))=0\bigr)\\
&\Longleftrightarrow  x_i(a_{i,\mbt}\wedge h_i(a_{i-1,\mbf})^*)=1\mbox{ or } 
x_{i}(b _{i,\mbt}\wedge h_i(b _{i-1,\mbf})^*)=1 
\mbox{ or }
 x_i(h_i((a\vee b )_{i-1,\mbt}\vee (a\vee b )_{i-1,\mbf}))=1;
\\[1.5ex]
x_i(\iota^{-1}(\iota(a)\vee \iota(b ))_{i,\mbf})=1
&\Longleftrightarrow   (\iota(a)\vee \iota(b ))(x_i)\in{\uparrow}_{k}{\mbf_i}\\
&\Longleftrightarrow   \iota(a)(x_i)\in
\{\mbt_j\}_{j \leq i-1} \cup\, \{\top_{\!\!j}\}_{j \leq i-1} \mbox{ or }
\iota(b )(x_i)\in
\{\mbt_j\}_{j \leq i-1} \cup\, \{\top_{\!\!j}\}_{j\leq i-1}
\mbox{ or }
\{ \iota(a)(x_i), \iota(b )(x_i)\} \subseteq {\uparrow}_k \mbf_i 
\\
 &\Longleftrightarrow   \iota(a)(x_i\circ h_i)\in
\{ \mbt_j\}_{j\leq i-1} \cup\, \{\top_{\!\!j}\}_{j\leq i-1}
\mbox{ or } 
\iota(b )(x_i\circ h_i)\in\{
\mbt_j\}_{j\leq i-1}\cup\, \{ \top_{\!\!j}\}_{j \leq i-1} 
\\
&\hspace*{2.4cm} 
\mbox{ or }
\bigl( x_i(a_{i,\mbf})=1\mbox{ and }x_i( b _{i,\mbf})=1\bigr)  \\
&\!\! \Longleftrightarrow 
   x_i(h_i((a\vee b )_{i-1,\mbt}))=1\mbox{ or }
x_i(h_i((a\vee b )_{i-1,\mbf}))=1 \mbox{ or }
x_i(a_{i,\mbf}\wedge b _{i,\mbf})=1.
\end{align*}
Very little work is needed to complete the proof of 
the following theorem.

\begin{thm}\label{Thm:ProdRepDefault} 
{\bf (Product Representation Theorem)} 
For each default bilattice\,
$\B \in 
\CV_n$
there exists an $n$-default sequence\, $\Seq$ such that\, 
$\B\cong \Seq\odot\Seq$.
\end{thm}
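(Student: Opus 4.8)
The plan is to manufacture the required sequence $\Seq$ by feeding $\B$ through our dual functors and then to chain together the two (dual) equivalences with the construction of $\Seq\odot\Seq$ assembled above. Concretely, I would set $\X=\D(\B)$ and take $\Seq := \fnt{K}_n(\X)$, the $n$-default sequence obtained by applying $\fnt{K}_n\colon \IScP(\twiddle{\CM_n})\to\cat{DS}_n$ to the natural dual space of $\B$.

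First I would confirm that $\Seq$ really is a legitimate object of $\cat{DS}_n$, that is, that the linking homomorphisms satisfy the complementation condition built into the definition of an $n$-default sequence. Writing $\X=(X_0\du\cdots\du X_n;\leq_0,\ldots,\leq_n,g_1,\ldots,g_n,\Tp)$, Theorem~\ref{Thm:CharDualVn} guarantees (condition (iii)) that each $g_i$ is semi-constant; the components of $\Seq$ are $\Lalg_i=\fnt{K}(X_i;\leq_i,\Tp_i)$ with linking maps $\fnt{K}(g_i)\colon\Lalg_{i-1}\to\Lalg_i$. Applying Lemma~\ref{lem:Bool-sub} with $f=\fnt{K}(g_i)$ and $\phi=g_i=\fnt{H}(f)$, the semi-constancy of $g_i$ yields that every element of $\fnt{K}(g_i)(\Lalg_{i-1})$ has a complement in $\Lalg_i$. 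Hence $\Seq\in\cat{DS}_n$.

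Next I would string together three isomorphisms. Since the duality of Theorem~\ref{Thm:DualityVarietyKn} is strong, the evaluation $e_\B\colon\B\to\E\D(\B)$ is an isomorphism, so $\B\cong\E\D(\B)$. The functors $\fnt{H}_n$ and $\fnt{K}_n$ set up a dual equivalence between $\IScP(\twiddle{\CM_n})$ and $\cat{DS}_n$, so the natural isomorphism $\mathrm{id}\cong\fnt{H}_n\fnt{K}_n$ gives $\X\cong\fnt{H}_n\fnt{K}_n(\X)=\fnt{H}_n(\Seq)$ in $\IScP(\twiddle{\CM_n})$; applying the functor $\E$ preserves this, whence $\E\D(\B)=\E(\X)\cong\E\fnt{H}_n(\Seq)$. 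Finally, the operations of $\Seq\odot\Seq$ were defined by transport along the bijection $\iota$ precisely so that $\iota\colon\Seq\odot\Seq\to\E\fnt{H}_n(\Seq)$ is an isomorphism. Composing gives $\B\cong\E\D(\B)\cong\E\fnt{H}_n(\Seq)\cong\Seq\odot\Seq$, as required.

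In truth there is no serious obstacle remaining: the substantive work has already been done in establishing the two equivalences and, above all, in the coordinatewise verification that the intrinsic formulas for $\otimes,\oplus,\wedge,\vee,\neg$ coincide with the $\iota$-transported operations, which is what makes $\Seq\odot\Seq$ a genuinely $\Seq$-intrinsic construction rather than a disguised copy of $\E\fnt{H}_n(\Seq)$. The only point that demands any care is checking that $\fnt{K}_n(\D(\B))$ is an honest $n$-default sequence, and that is exactly what Lemma~\ref{lem:Bool-sub} delivers; everything else is the chaining of isomorphisms above.
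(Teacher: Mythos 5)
Your proposal is correct and follows essentially the same route as the paper: take $\Seq=(\fnt{K}_n\circ\fnt{D})(\B)$ and chain the strong duality $\B\cong\E\D(\B)$, the equivalence between $\IScP(\twiddle{\CM_n})$ and $\cat{DS}_n$, and the isomorphism $\iota$ defining $\Seq\odot\Seq$. Your explicit check via Theorem~\ref{Thm:CharDualVn}(iii) and Lemma~\ref{lem:Bool-sub} that $\fnt{K}_n(\D(\B))$ satisfies the complementation condition is a detail the paper absorbs into its earlier assertion that $\fnt{H}_n$ and $\fnt{K}_n$ set up the equivalence with $\cat{DS}_n$, so including it is a welcome clarification rather than a divergence.
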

\begin{proof}
It suffices to consider $\Seq=(\fnt{K}_n\circ\fnt{D})(\B)$ and  to
observe that, by definition of $\Seq\odot\Seq$ and the equivalence 
between $\HSP(\K_n)$ and $\cat{DS}_n$,  
\[
\B\cong (\fnt{E}\circ\fnt{H}_n)((\fnt{K}_n\circ\fnt{D})(\B))=
(\fnt{E}\circ\fnt{H}_n)(\Seq)\cong \Seq\odot\Seq.\qedhere
\]
\end{proof}

The  operations in a lattice determine, and are determined by, 
the underlying order.  
This leads us to enquire how the knowledge and truth orders of
$\Seq \odot \Seq$ can be  characterised. 
Our definitions of $\otimes$ and of $\oplus$  imply immediately that these 
operations on the subset~$A$ coincide with the coordinatewise-defined lattice operations 
on $(\Lalg_0^2 \times \dots \times \Lalg_n^2)$.  As a consequence,
the knowledge order $\leq_k$ of the bilattice~$\Seq \odot \Seq$
is the inherited coordinatewise order. (An application of Theorem~\ref{Thm:RevEngVarietyKn}, provides an alternative proof of this statement.)  
One may then ask whether the truth order $\leq_t$ can likewise be described in terms of coordinates.  Certainly, for $a,b \in \A$,  we have 
$a \leq_t b$ if and only if  $ a \lor b = b$
(or equivalently if and only if $ a \land b = a$).
The fact that $a \lor b = \iota^{-1 }(\iota(a) \lor \iota(b))$ implies that
$a \leq_t b$ if and only if $\iota(a) \lor \iota(b) = \iota(b)$. 
However an intrinsic description of $\leq_t$ is not easy to formulate in general.  

We  end this section with a simple  example, spelled out in detail.
We include  a recursive description of the truth order, capitalising on the 
fact that each lattice in our default sequence  has only two elements.

\begin{ex}  \label{ex:Ln}
Let $n\geq 1$ and $\Seq_n$ be the $n$-default sequence:
 \[
\Seq_n=
\underbrace{\two\overset{\Id}{\longrightarrow}\two\overset{\Id}{\longrightarrow}\ \cdots \
\overset{\Id}{\longrightarrow} \two\overset{\Id}{\longrightarrow}\two}_{n -{ \rm times}},
\]
where $h_i = \Id \colon\two\to\two$, the identity map, for each~$i$.
Here 
\begin{align*} 
A_n &=\{\,a\in \
2^{2(n+1)}\mid  h_i(a_{i-1,\mbt} \lor a_{i-1,\mbf}) \leq a_{i,\mbt}, a_{i,\mbf} \text{ for } 1 \leq i \leq n\,\}  \\
&= \{\,a\in \ 2^{2(n+1)}\mid \max 
\{a_{i-1,\mbt} 
,a_{i-1,\mbf}\} \leq \min\{a_{i,\mbt}, a_{i,\mbf}\} \text{ for } 1 \leq i \leq n\,\}.  
\end{align*}
No restrictions are imposed on $a_{0,\mbt}$ or $a_{0,\mbf}$, so there are 
four choices for this pair.  
However, if $1 \in \{a_{i,\mbt},a_{i,\mbf}\}$ then $a_{j,\mbt}=1=a_{j,\mbf}$ for all $j>i$. 
It follows that $A_n$ has  $4 + 3n$ elements.   

This calculation strongly suggests that $\Seq_n \odot \Seq_n$
is isomorphic to $\K_n$ (note  also  Example~\ref{ex:Kn}).
We would like to  show that  the map $\Phi_n$
defined by
\[
\Phi_n(a)=\begin{cases}
\top_{\!\!i}&\mbox{if }1=a_{i,\mbt}=a_{i,\mbf},\mbox{ and }a_{j,\mbt}=a_{j,\mbf}=0\mbox{ for each } j<i,\\
\mbt_i&\mbox{if }1=a_{i,\mbt} \mbox{ and }0=a_{i,\mbf},\\
\mbf_i&\mbox{if }1=a_{i,\mbf} \mbox{ and }0=a_{i,\mbt},\\
\top_{\!\!n+1}&
\mbox{if }
a_{j,\mbt}=a_{j,\mbf}=0\mbox{ for  }0\leq j\leq n
\end{cases}
\]
is an isomorphism of default bilattices.  
Certainly $\Phi_n$ is surjective.  Since its domain and range have the same cardinality, it is also injective.
We want  to show that $\Phi_n$ preserves all the bilattice operations.  
This is trivial for $\bot$, $\top$ and $\neg$ and routine for 
$\otimes$ and $\oplus$.  
We now apply our general formulae  to calculate 
$(a \wedge b )_{i,v}$ for $a,b \in A_n$, where  
$v \in \{ \mbt,\mbf\}$.  Since $(a \wedge b )_{i,v}\in \{0,1\}$, 
it will suffice to give the conditions under which the value is~$1$.
Note first that $(a \land b )_{0,\mbt} = 1$ if and only if
$a_{0,\mbt} =b _{0,\mbt} =1$ and that 
$(a \land b )_{0,\mbf} = 1$ if and only if 
$a_{0,\mbf} =1$ or $b _{0,\mbf} =1$.   
For $i > 1$,
\begin{align*} 
(a \land b)_{i,\mbt} = 1
&\Longleftrightarrow 
  (a_{i,\mbt} = 1 \text{ and  }  b _{i,\mbt}=1)
\,\text{ or }\,  \bigl((a\land b )_{i-1,\mbt} = 1  \text{ or } (a\land b )_{i-1,\mbf} = 1\bigr); \\
(a \land b)_{i,\mbf} = 1
&\Longleftrightarrow 
\bigl( a_{i,\mbf} = 1 \text{ and } a_{i-1, \mbt} = 0\bigr)
\,\text{ or }\,
\bigl( b _{i,\mbf} = 1 \text{ and } b _{i-1, \mbt} = 0\bigr)
\,\text{ or }\, \bigl((a\land b )_{i-1,\mbt} = 1 \text{ or } (a\land b )_{i-1,\mbf} = 1\bigr).
\end{align*}
Note that the recursive components of these definitions reflect 
the restriction imposed by  $a \land b $ belonging  to 
$\Seq _n\odot\Seq _n$.
Similar considerations apply to~$\lor$.  
It can now be verified that $\Phi_n$ preserves $\land$ and $\lor$.

We already know that $\Seq _n \odot \Seq _n$ inherits its knowledge order coordinatewise from  
$\two^{2(n+1)}$.  
To access the truth order,
we use the fact that 
 $a\leq_t b $   if and only if
$a_{i,\mbt} = (a\land b )_{i,\mbt}$ 
and 
$b _{i,\mbf} = (a\lor b )_{i,\mbf}$ 
for ${0\leq i\leq n}$. 
By treating odd and even coordinates in this way we are able to save work by exploiting the parallels between the definitions of $\land$ and $\lor$.
Certainly 
${a_{0,\mbt} = (a\land b )_{0,\mbt}} $ if and only if 
$a_{0,\mbt} \leq b _{0,\mbt}$ and 
$a_{0,\mbf} = (a\land b )_{0,\mbf} $ if and only if 
${a_{0,\mbf} \geq b _{0,\mbf}}$.  
Hence in identifying when $a \leq_t b $ it will be sufficient to restrict attention
to pairs $a$ and $b $ whose ${0,v}$-coordinates are related in this way.   
Consider $i=1$.  Taking into account the fact that $a$ and $b$ are members 
of~$A$, we  see easily that  
 \begin{align*} 
a_{1,\mbt} =  (a\land b )_{1,\mbt} 
&\Longleftrightarrow 
a_{0,\mbt} = 1
 \text{ or }  
a _{0,\mbf} = 1 \text{ or }  a_{1,\mbt} \leq b_{1,\mbt}
\intertext{and, likewise,} 
b_{1,\mbf} =  (a\lor b )_{1,\mbf} 
&\Longleftrightarrow 
b_{0,\mbt} = 1
 \text{ or } 
 b _{0,\mbf} = 1 \text{ or }  a_{1,\mbf} \geq b_{1,\mbf}.
\end{align*}
We now proceed by recursion, in the following way.
We let 
 $a_{\leq i}\in  \{0,1\}^{2(n+1)} $ be the projection of $a$ onto 
its 
first $2(i+1)$ coordinates. 
We have  
$a\leq_t b$ if and only if $ (a\wedge b)_{i,\mbt}=a_{i,\mbt}$ and 
$(a\lor b)_{i,\mbf}=b_{i,\mbf}$
 for each $i\in\{1,\ldots,n\}$ and for this to hold we must have 
in particular
$a_{\leq n-1}=(a_{0,\mbt},a_{0,\mbf},\ldots,a_{n-1,\mbt},a_{n-1,\mbf})\leq_t (b_{0,\mbt},
b_{0,\mbf},
\ldots,b_{n-1,\mbt},b_{n-1,\mbf})=b_{\leq n-1}$. 
Assume by induction that
 we already know necessary and sufficient conditions for $a_{\leq n-1}\leq_t b_{\leq n-1}$. Now $(a\wedge b)_{n,\mbt}=(a_{n,\mbt}\wedge b_{n,\mbt}) \vee h_{n}((a\wedge b)_{n-1,\mbt})\vee h_{n}((a\wedge b)_{n-1,\mbf})=(a_{n,\mbt}\wedge b_{n,\mbt}) \vee a_{n-1,\mbt}\vee a_{n-1,\mbf}$. And from here we obtain $ a_{n,\mbt}=(a\wedge b)_{n,\mbt}$ if and only if $a_{n-1,\mbf}=1$ or 
$a_{n,\mbt}\leq b_{n,\mbt}$. 
A similar argument applies to  $b_{n,\mbf}$.
From this we can deduce that 
the truth order is that induced by  the lexicographic order  on 
the power 
$(\two\times \two^{\partial})^{n+1}$ of the poset $\two \times \two^\partial$; 
here  $\two^\partial$ denotes the order dual of $\two$, that is $\{0,1\} $ with strict order in which $1 < 0$.  Thus  $\two \times \two^\partial$ is just
$\FOUR$ in its truth order (recall Fig.~\ref{fig:FOUR+SEVEN}(i)).  
 By way of illustration,  Fig.~\ref{fig:K1-orders} 
shows the knowledge and truth orders on $\Seq_n \odot \Seq_n$ for $n=1$. In it 
we have labelled  the elements with 
 four-element binary strings 
$a_{0,\mbt}a_{0,\mbf}a_{1,\mbt}a_{1,\mbf}$.

\begin{figure}[ht]
\begin{center}
\begin{tikzpicture}[scale=.85] 

\li{(1,1)--(2,2)--(3,1)}
\li{(1,1)--(2,0)--(3,1)}
\li{(1,-1)--(2,0)--(3,-1)}
\li{(1,-1)--(2,-2)--(3,-1)}

\epo{2,2}
\epo{1,1}
\epo{3,1}
\epo{2,0}

\epo{1,-1}
\epo{3,-1}
\epo{2,-2}

\node at (2.5,2.2) {$\scriptstyle{1 1 1 1}$};
\node at (0.5,1)   {$\scriptstyle{0 1 1 1}$}; 
\node at (3.5,1)   {$\scriptstyle{1 0 1 1}$}; 
\node at (2,0.5)   {$\scriptstyle{0 0 1 1}$}; 
\node at (0.5,-1)  {$\scriptstyle{0 0 0 1}$}; 
\node at (3.5,-1)  {$\scriptstyle{0 0 1 0}$}; 
\node at (2,-2.3)  {$\scriptstyle{0 0 0 0}$};

%truth order
\li{(7.5,2)--(7.5,-2)--(11.5,0)--(7.5,2)}
\li{(9.5,1)--(9.5,-1)}

\epo{7.5,2}
\epo{7.5,0}
\epo{7.5,-2}
\epo{9.5,1}
\epo{9.5,0}
\epo{9.5,-1}
\epo{11.5,0}

\node at (7,2.1)   {$\scriptstyle{1 0 1 1}$}; 
\node at (7,0)     {$\scriptstyle{1 1 1 1}$};  
\node at (7,-2.1)  {$\scriptstyle{0 1 1 1}$};
\node at (10,1.1)  {$\scriptstyle{0 0 1 0}$}; 
\node at (8.9,0)   {$\scriptstyle{0 0 1 1}$}; 
\node at (10,-1.1) {$\scriptstyle{0 0 0 1}$}; 
\node at (12,0)    {$\scriptstyle{0 0 0 0}$};
\end{tikzpicture}
\caption{The knowledge order (left) and truth order (right)  of $\Seq_1 \odot \Seq_1$ \label{fig:K1-orders}}
\end{center}
\end{figure}
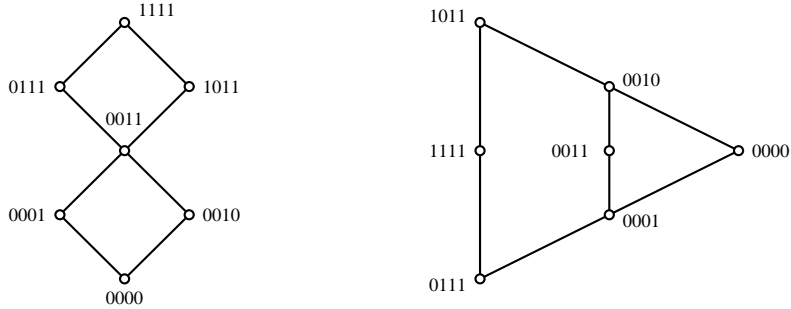 

Our product representation applied to the default sequence $\Seq_n \odot \Seq_n$ associated with the algebra $\K_n$ tells us how  $\K_n$  inherits its knowledge order and its truth order
from a full power.  The latter  is a power of  what is known as the twist structure based on $\two$ 
(see for example \cite{OW} for a discussion of this notion).
We stress that the lexicographic ordering on powers arises only 
because the default sequences we have been considering  are so very  simple. 
(Of course, too, the truth operations are not the join and meet 
inherited from $(\two\times \two^{\partial})^{n+1}$ with the lexicographic order.)
Returning  full circle to our starting point in Section~\ref{intro} we can see
how the particular default bilattices $\mathcal{SEVEN}$, $\mathcal{TEN}$, $\ldots$ relate  
to twist structures in a way which generalises the construction of $\FOUR$
as a twist structure.
\end{ex}

%%%%%%%%%%%%%%%%%%%%%%%%%%%%%%%%%%%%%%%%%%%%%%%%%%%%%%%

\section{Dualities for  quasivarieties of prioritised default bilattices}
\label{sec:dualitiesKn}

In this 
final section we  turn our attention from the variety  
${\CV_n=\HSP(\K_n)}$ to the quasivariety $\CQ_n =\ISP(\K_n)$, 
for $n\geq1$.  
 We cannot expect there to be a 
product representation 
entirely within the quasivariety because our work in the preceding 
sections heavily involves homomorphic images.  
However $\CQ_n$ does come within the scope of natural duality theory.  
Indeed this class  will have a single-sorted duality based on an 
alter ego $\twiddle{\spc{K}_n}$ 
and from which we can obtain a concrete 
representation for the members of  $\CQ_n$.   
We record below the theorems we can obtain in this way.  
The proofs are somewhat simpler than  those of the corresponding 
results for~$\CV_n$.
 We omit the details, commenting  just on a few salient points.

\begin{thm}\label{thm:dualityKn}
Consider $\CQ_n$.
Then \, 
$\twiddle{\spc{K}_n} 
= ( K_n ; S_{n,n},\ldots,S_{n,0},\Tp )$
yields a strong  {\upshape(}and hence full\,{\upshape)} 
duality on\, $\CQ_n$.  Moreover,  this duality  is optimal. 
\end{thm}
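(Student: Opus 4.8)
The plan is to follow the same two-step template as in the proofs of Theorems~\ref{Thm:DualityVarietyKn} and~\ref{thm:optimalstrong}, but now in the single-sorted setting, where several simplifications appear. First I would invoke the single-sorted case of Theorem~\ref{thm:NUnSorts} with $\CM = \{\K_n\}$. Since $\K_n$ is subdirectly irreducible, lattice-based, and has no proper subalgebras (all established in Section~\ref{sec:ClassKn}), the hypotheses are met, so the alter ego $(K_n; \mathbb{S}(\K_n^2),\CQ_n(\K_n,\K_n),\Tp)$ yields a strong, hence full, duality on $\CQ_n$. By the rigidity of $\K_n$ noted in the proof of Lemma~\ref{lem:condH} (it is subdirectly irreducible with no non-identity endomorphism), the operation part $\CQ_n(\K_n,\K_n)=\{\Id\}$ is trivial and may be discarded at once. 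It then remains to cut the relational part down to $R_n=\{S_{n,0},\ldots,S_{n,n}\}$.

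For this reduction I would appeal to Theorem~\ref{thm:Snmonly}, which lists $\mathbb{S}(\K_n^2)$ as $\K_n^2$ together with $\S_{n,m}$, $\conv{\S}_{n,m}$ and $\S_{n,m}\cap\conv{\S}_{n,m}$ for $0\le m\le n$. The relation $\K_n^2$ is trivial and so automatically entailed; each converse $\conv{S}_{n,m}$ is entailed by $S_{n,m}$; and each $S_{n,m}\cap\conv{S}_{n,m}$ is obtained from $S_{n,m}$ and $\conv{S}_{n,m}$ by intersection. As these constructions (adding trivial relations, converse, intersection) preserve strong dualities \cite[Sections~2.4 and~9.4]{CD98}, $R_n$ entails $\mathbb{S}(\K_n^2)$ and $\twiddle{\spc{K}_n}=(K_n;S_{n,n},\ldots,S_{n,0},\Tp)$ yields a strong duality on $\CQ_n$. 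This is the considerably shorter, single-sorted analogue of Claims~1 and~2 in the proof of Theorem~\ref{Thm:DualityVarietyKn}, with no operations to juggle.

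For optimality, since the alter ego is purely relational I must show that no $S_{n,m}$ can be deleted, and I would use the test-algebra strategy \cite[Section~8.8.1]{CD98} with $\S_{n,m}$, mirroring Theorem~\ref{thm:optimalstrong}. The key preliminary is $\D(\S_{n,m})=\CQ_n(\S_{n,m},\K_n)=\{\rho_1,\rho_2\}$, the two coordinate projections: $\S_{n,m}$ is a subdirect product of two copies of $\K_n$ in a congruence-distributive variety, so any surjection onto the subdirectly irreducible $\K_n$ has completely meet-irreducible kernel lying above one of the two projection kernels and, $\K_n$ having no non-identity endomorphism, must be a projection. Using the strict inclusions $S_{n,j}\subsetneq S_{n,i}$ for $i<j$, one checks that, lifted pointwise, $S_{n,j}$ on $\D(\S_{n,m})$ contains $(\rho_1,\rho_2)$ exactly when $j\le m$, contains $(\rho_2,\rho_1)$ only for $j<m$, and otherwise reduces to the diagonal. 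Defining $\gamma\colon\D(\S_{n,m})\to\twiddle{\spc{K}_n}$ by $\gamma(\rho_1)=\mbf_m$ and $\gamma(\rho_2)=\mbt_m$, one verifies that $\gamma$ preserves every $S_{n,j}$ with $j\ne m$ (since $\mbf_m,\mbt_m\le_k\top_{\!\!j+1}$ when $j<m$, while only reflexive pairs occur when $j>m$) but does not preserve $S_{n,m}$, because $(\mbf_m,\mbt_m)\notin S_{n,m}$. The same fact $(\mbf_m,\mbt_m)\notin S_{n,m}$ shows $\gamma$ is not the evaluation at any element of $\S_{n,m}$, so once $S_{n,m}$ is removed the evaluation $e_{\S_{n,m}}$ is no longer surjective and the reduced alter ego fails to yield a duality.

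The main obstacle I anticipate is the preliminary identification $\D(\S_{n,m})=\{\rho_1,\rho_2\}$: Lemma~\ref{lem:condH} is stated for homomorphisms into $\K_m$, not into $\K_n$, so I would record the (routine but essential) variant for maps into $\K_n$, spelling out the meet-irreducible-kernel argument of \cite[Theorem~2.5]{Pix} in this congruence-distributive setting. Once the dual spaces and their lifted relations are pinned down, the construction of $\gamma$ and the check that it breaks precisely $S_{n,m}$ are entirely routine, playing exactly the role of $\gamma_m$ in Theorem~\ref{thm:optimalstrong}.
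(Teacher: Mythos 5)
Your proposal is correct and follows essentially the same route as the paper's own (outline) proof: the single-sorted NU Strong Duality Theorem, entailment via trivial relations, converses and intersections to reach $R_n$, and the test-algebra argument on $\S_{n,m}$ using $\CQ_n(\S_{n,m},\K_n)=\{\rho_1,\rho_2\}$. The only (harmless) deviations are that you send $\rho_1\mapsto\mbf_m$ where the paper uses $\top_{\!\!m}$ (both choices break exactly $S_{n,m}$), and that you explicitly supply the codomain-$\K_n$ variant of Lemma~\ref{lem:condH}, a small gap the paper glosses over.
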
 

In outline, the proof of Theorem~\ref{thm:dualityKn} proceeds 
as follows.
Theorem~\ref{thm:Snmonly}, combined with very simple entailment 
arguments,
   leads to the alter ego we present. 
Besides  the entailment constructs of converse and trivial relations 
we need also intersection \cite[Section~2.4]{CD98}.  
The proof of optimality is similar to the first part of the proof of 
Theorem~\ref{thm:optimalstrong}: 
Lemma~\ref{lem:condH} implies that 
$\CQ_n(\S_{n,m},\K_n)$ contains just the restrictions $\rho_1$, $\rho_2$ of the coordinate projections.  
The map from $\CQ_n(\S_{n,m},\K_n)$ to  
$\twiddle{\spc{K}_n}$
sending  $\rho_1$ to $\top_{\!\!m}$ and $\rho_2$ to 
$\mbt_m$ can be shown to 
preserve $S_{n,j}$ for $j\ne m$ but to fail to preserve $S_{n,m}$.  
 
We remark that we do gain here by selecting just  the 
$(n+1)$ relations $S_{n,0}, \dots, S_{n,n}$ rather than the full set of 
binary algebraic relations, of which there are $3n+4$.  
But this gain in simplicity is much less marked than that 
which we saw for the multisorted duality for $\CV_n$.  

We now state a characterisation of the dual category 
$\IScP(\twiddle{\spc{K}_n})$.
We observe first that each of the relations in 
$\twiddle{\spc{K}_n}$, 
except the partial order $S_{m,m}$, is a quasi-order which is not a 
partial order (see Fig.~\ref{fig:Snm} for the case of $n=2$). 
The need for the following  notion should now come as no surprise.
Let $\X = (X;\Tp)$ be a compact space 
and let $\preccurlyeq$ be a quasi-order on~$X$.
We say that $\preccurlyeq$ is a \defn{Priestley quasi-order} 
if $x\not\preccurlyeq y$ implies that there exists a clopen set 
$U$ that is a $\preccurlyeq$-up-set  for which 
 $x\in U$ and $y\notin U$. 
(If in addition $\preccurlyeq$ is a partial order then 
$(X; \preccurlyeq, \Tp)$ is a Priestley space.)
The strategy for the proof of  Theorem~\ref{thm:ISPdualcat} is 
exactly the same as that of Theorem~\ref{Thm:CharDualVn}:  
we exploit
the Separation Theorem for Topological Quasivarieties 
\cite[Theorem~1.4.4]{CD98}.

\begin{thm} \label{thm:ISPdualcat}
Let 
$\X=(X;\preccurlyeq_0,\preccurlyeq_1,\ldots,\preccurlyeq_n,\Tp)$ 
be a structured topological space. Then\, 
$\X\in\IScP(\twiddle{\spc{K}_n})$
if and only if
\begin{newlist}
\item[{\upshape(i)}] $(X;\preccurlyeq_0,\Tp)$ is a Priestley space;
\item[{\upshape(ii)}] $\preccurlyeq_i$ is a Priestley quasi-order 
extending $\preccurlyeq_0$, for $i\in\{1,\ldots,n\}$;
\item[{\upshape(iii)}]$\preccurlyeq_0\, \subseteq\, \preccurlyeq_1\, \subseteq\cdots\subseteq\, \preccurlyeq_n$;
\item[{\upshape(iv)}]
 $\succcurlyeq_i\,\subseteq \,\preccurlyeq_j$, for  $0\leq i<j\leq n$.
\end{newlist}
\end{thm}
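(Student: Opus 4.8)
The plan is to follow exactly the template of the proof of Theorem~\ref{Thm:CharDualVn}, invoking the Separation Theorem for topological quasivarieties \cite[Theorem~1.4.4]{CD98}. Throughout I match the relation $\preccurlyeq_i$ carried by $\X$ with the relation $S_{n,n-i}$ of the alter ego $\twiddle{\spc{K}_n}$, so that conditions (i)--(iv) are precisely the translations of properties of $S_{n,n},\ldots,S_{n,0}$, read in reverse order of the second index.

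For the forward implication I would first check that $\twiddle{\spc{K}_n}$ itself satisfies (i)--(iv). Conditions (i) and (ii) are immediate from the remarks preceding the theorem: $S_{n,n}$ is a partial order and each $S_{n,m}$ with $m<n$ is a quasi-order, while on a finite discrete space every (quasi-)order is automatically a Priestley (quasi-)order. Condition (iii) is the chain $S_{n,n}\subseteq\cdots\subseteq S_{n,0}$, which restates $S_{n,j}\subsetneq S_{n,i}$ for $i<j$. Condition (iv) amounts to $\conv{S}_{n,a}\subseteq S_{n,b}$ for $a>b$, a one-line verification from the defining description of $S_{n,m}$: any pair witnessing membership in $\conv{S}_{n,a}$ has both coordinates below $\top_{\!\!a+1}\le_k\top_{\!\!b+1}$, hence lies in $S_{n,b}$. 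Since each of (i)--(iv) is preserved by products and by closed substructures, every $\X\in\IScP(\twiddle{\spc{K}_n})$ inherits them.

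For the converse I would assume $\X$ satisfies (i)--(iv) and apply \cite[Theorem~1.4.4]{CD98}: it suffices to produce, for every pair $x\not\preccurlyeq_i y$, a continuous structure-preserving map $f\colon\X\to\twiddle{\spc{K}_n}$ with $(f(x),f(y))\notin S_{n,m}$, where $m=n-i$. Because $\preccurlyeq_i$ is a Priestley quasi-order there is a clopen $\preccurlyeq_i$-up-set $U$ with $x\in U$ and $y\notin U$; I then set $f\equiv\top_{\!\!m}$ on $U$ and $f\equiv\top_{\!\!m+1}$ off $U$. This $f$ is continuous, and $(f(x),f(y))=(\top_{\!\!m},\top_{\!\!m+1})\notin S_{n,m}$ as required. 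Taking $i=0$ also handles point separation, since $\preccurlyeq_0$ is a partial order.

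The crux is verifying that $f$ preserves every relation $S_{n,j}$, which I would do by comparing $j$ with $m$. When $j<m$ all four elements of $\{\top_{\!\!m},\top_{\!\!m+1}\}^2$ lie in $S_{n,j}$, so there is nothing to check. When $j=m$ the only forbidden ordered pair is $(\top_{\!\!m},\top_{\!\!m+1})$, which cannot be produced because $U$ is a $\preccurlyeq_i$-up-set. The delicate case, and the step I expect to be the main obstacle, is $j>m$: here both $(\top_{\!\!m},\top_{\!\!m+1})$ and $(\top_{\!\!m+1},\top_{\!\!m})$ are forbidden, so I must show that $z\preccurlyeq_{n-j}w$ forces $f(z)=f(w)$. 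Using (iii) I get $z\preccurlyeq_i w$, which rules out the first pair; to rule out the reverse pair I invoke (iv), which converts $z\preccurlyeq_{n-j}w$ into $w\preccurlyeq_i z$, so that $U$ being a $\preccurlyeq_i$-up-set applies again. This interplay of (iii) and (iv) is exactly where the extra containment hypotheses earn their keep, and it is the only part of the argument requiring genuine care.
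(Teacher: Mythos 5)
Your proof is correct and follows exactly the route the paper itself takes (the paper only sketches this proof, saying the strategy is that of Theorem~\ref{Thm:CharDualVn} via the Separation Theorem \cite[Theorem~1.4.4]{CD98}): verify (i)--(iv) for $\twiddle{\spc{K}_n}$ and note that they persist under products and closed substructures, then for the converse separate with two-valued maps into $\{\top_{\!\!m},\top_{\!\!m+1}\}$, where (iii) and (iv) together force $f(z)=f(w)$ for the relations of smaller second index. One small repair to your verification of (iv) for the alter ego: a non-diagonal pair in $S_{n,a}$ need not have both coordinates below $\top_{\!\!a+1}$ (e.g.\ $(\mbf_a,\top_{\!\!a})\in S_{n,a}$ via the clause $x\le_k y\le_k\top_{\!\!a}$), but both coordinates do lie below $\top_{\!\!a}$, and $\top_{\!\!a}\le_k\top_{\!\!b+1}$ whenever $a>b$, so the containment $\conv{S}_{n,a}\subseteq S_{n,b}$ still follows.
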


\section*{Acknowledgements}  
The first author was supported by a Marie Curie Intra European Fellowship within the 7th European Community Framework Program (ref. 299401-FP7-PEOPLE-2011-IEF). 
The second author was supported by  the Claude Leon Foundation 
during the writing of this paper.  He 
acknowledges also the support of the Rhodes Trust during the period  in which he was studying for his DPhil degree at the University of Oxford, under the supervision of the third author:  during this period he undertook the initial research from 
which the paper subsequently  evolved.

\bibliographystyle{amsplain}
\renewcommand{\bibname}{References}
\raggedright

\end{document}